\theoremstyle{definition}
\newtheorem{theorem}{Theorem}[section]
\newtheorem{lemma}[theorem]{Lemma}
\newtheorem{corollary}[theorem]{Corollary}
\newtheorem{proposition}[theorem]{Proposition}
\newtheorem{definition}[theorem]{Definition}
\newtheorem{example}[theorem]{Example}
\newtheorem{remark}[theorem]{Remark}
\newtheorem{question}[theorem]{Question}
\newtheorem{situation}[theorem]{Situation}
\newtheorem{warning}[theorem]{Warning}
\newcommand{\scr}[1]{\ensuremath{\mathscr{#1}}}
\newcommand{\msout}[1]{\text{\sout{\ensuremath{#1}}}}
\newcommand{\ZZ}{\mathbb{Z}}
\newcommand{\CC}{\mathbb{C}}
\newcommand{\NN}{{\mathbb{N}}}
\newcommand{\OO}{\mathcal{O}}
\newcommand{\Aff}{{\mathbb{A}}}
\newcommand{\PP}{\mathbb{P}}
\newcommand{\GG}{\mathbb{G}}
\newcommand{\Gtrop}{\GG_{trop}}
\newcommand{\Glog}{\GG_{log}}
\newcommand{\Spec}{{\rm{Spec}\:}}
\newcommand{\SSpec}[1]{{\underline{\rm{Spec}}_{#1}\:}}
\newcommand{\Hom}{{\rm Hom}}
\newcommand{\HHom}{\underline{{\rm Hom}}}
\newcommand{\Der}{\text{Der}}
\newcommand{\ccx}[1]{{\mathbb{L}_{#1}}}
\newcommand{\lccx}[1]{{\mathbb{L}^\ell_{#1}}}
\newcommand{\gp}[1]{#1^{\rm gp}}
\newcommand{\ev}{\hat \wedge}
\newcommand{\Tl}[1]{{T^{\ell}_{#1}}}
\newcommand{\lkah}[1]{\Omega^\ell_{#1}}
\newcommand{\kah}[1]{\Omega_{#1}}
\newcommand{\Log}{{\mathcal{L}og}}
\newcommand{\lpb}{{\arrow[dr, phantom, very near start, "\ulcorner \ell"]}}
\newcommand{\lpbstrict}{{\arrow[dr, phantom, very near start, "\ulcorner \msout{\ell}"]}}
\newcommand{\pb}{{\arrow[dr, phantom, very near start, "\ulcorner"]}}
\newcommand{\num}[1]{\langle #1 \rangle}
\renewcommand{\tilde}[1]{\widetilde{#1}}
\renewcommand{\hat}[1]{\widehat{#1}}
\newcommand{\stquot}[1]{\left[ #1 \right]}
\def\overnorm#1{\overline{#1}\vphantom{#1}}
\renewcommand{\bar}[1]{\ensuremath{\overnorm{#1}}}
\newcommand{\Ms}{{\overline{M}_{g, n}}}
\newcommand{\Ml}{{\mathscr{M}^\ell_{g, n}}}
\newcommand{\Aut}{\underline{\text{Aut}}}
\newcommand{\Sym}{\text{Sym}\,}
\newcommand{\longequals}{\xlongequal{\: \:}}
\newcommand{\colim}{\text{colim}}
\newcommand{\adj}[1]{\llbracket #1 \rrbracket}
\newcommand{\ljet}[2]{\Delta_{#2}^{(#1)}}
\newcommand{\tljet}[2]{\scr D_{#2}^{(#1)}}
\newcommand{\jet}[1]{\Delta_{#1}}
\newcommand{\ljsp}[2]{J^{(#1)}_{#2}}
\newcommand{\tljsp}[2]{\scr J^{(#1)}_{#2}}
\newcommand{\ljspnonfs}[2]{S^{(#1)}_{#2}}
\newcommand{\jsp}[1]{J_{#1}}
\newcommand{\tjsp}[1]{\scr J_{#1}}
\newcommand{\WR}{\cal R}
\newcommand{\Gen}{\cal M}
\newcommand{\smet}{\Acute{e}t}
\newcommand{\bra}[1]{\left[{#1}\right]}
\newcommand{\AF}[1][{}]{\Theta_{{#1}}}
\newcommand{\af}[1][{}]{\AF[#1]}
\newcommand{\cal}[1]{\ensuremath{\mathcal{#1}}}
\newcommand{\pt}{\Spec k}
\newcommand{\VV}{\mathbb{V}}
\newcommand{\rk}{{\rm rk}}
\newcommand{\notinsubfile}[1]{}
\newcommand{\ord}{{\rm ord}}
\newcommand{\Fitt}{{\rm Fitt}}
\newcommand{\embdim}{{\rm emb.dim}}
\newcommand{\longsimeq}{\overset{\sim}{\longrightarrow}}
\title[Log jet spaces]{The log tangent space of the log jet space}
\author[Leo Herr]{Leo Herr$^1$}
\email{l.s.herr@math.leidenuniv.nl}
\date{\today \qquad
$\phantom{a}^1$Universiteit Leiden}
\begin{document}

\maketitle

\begin{abstract}
We introduce new notions of log jet spaces. Mildly singular spaces are ``smooth'' in log geometry, so their log jet spaces behave like the jet spaces of smooth varieties. Myriad examples contrast log jet spaces with the usual jet spaces of schemes. 

We then compute the log Kähler differentials of the log jet and arc spaces after de Fernex and Docampo. We obtain some of their applications to the structure of the log jet space. We conclude with comparison remarks with other log jet spaces. 
\end{abstract}

\section{Introduction}

Varieties with locally toroidal singularities can be made log smooth with suitable log structure. These include divisors with normal crossings, nodal curves, and all toric varieties. For these singular varieties, we introduce log jet spaces that behave like the jet spaces of smooth varieties. For varieties without log structure, log jet spaces are the usual jet spaces. We assume the reader is familiar with log structures at the level of \cite{katooriginal} or \cite{ogusloggeom}.

The log jet spaces $\ljsp{r}{X, m}$ of a target variety $X$ depend on the order of the jets $m$ and a new logarithmic parameter $r \in \NN \cup \{\infty\}$. When $m = \infty$, $\ljsp{r}{X, \infty}$ is called the \emph{log arc space}. 

Our long-term goals concern log motivic integration. The present article has three goals: 
\begin{enumerate}
    \item Relate the log jet space $\ljsp{r}{X, m}$ to the evaluation stack $\wedge X$ in log Gromov--Witten theory \cite{evaluationspacefamiliesofstandardlogpoints}. 
    \item Compute the log Kähler differentials $\lkah{\ljsp{r}{X, m}}$ of the log jet space $\ljsp{r}{X, m}$ in terms of those of $X$. 
    \item Obtain numerical applications to the embedding dimension of the log jet space at a point $\alpha \in \ljsp{r}{X, m}$. 
\end{enumerate}
The relation to $\wedge X$ establishes representability of $\ljsp{r}{X, m}$ by a log algebraic space. It also gives a new evaluation map from the space of stable maps $C \to X$ from curves to a variant of the jet space. 

The formula for the log Kähler differentials of the log jet space answers a question of L. Ein in log geometry. Knowing the tangent space of the arc space gives numerical formulas that dictate its structure \cite[Theorems D, E]{arcspacedifflsdefernexdocampo}. These numerical invariants are more subtle in log geometry.

\subsection{Representability}

We first show log jet spaces of fs log schemes $X$ are representable, mimicking \cite{evaluationspacefamiliesofstandardlogpoints}. 

\begin{theorem}[Theorem \ref{thm:repability}]

Let $X$ be an fs log scheme. The log jet spaces $\ljsp{r}{X, m}$ are representable by fs log algebraic spaces. If $X$ admits charts Zariski locally, $\ljsp{r}{X, m}$ is representable by an fs log scheme. 

\end{theorem}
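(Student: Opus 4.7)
The plan is to identify $\ljsp{r}{X,m}$ with a relative Hom-space built over the evaluation stack $\wedge X$ of \cite{evaluationspacefamiliesofstandardlogpoints}, and then transport the representability results established there. The functor of points should send a log scheme $S$ to the set of log morphisms $S \times_{\pt} \ljet{r}{m} \to X$, where $\ljet{r}{m} = \Delta_m^{(r)}$ is the model log jet of order $m$ and logarithmic depth $r$; representability amounts to producing an fs log algebraic space carrying this functor, and an fs log scheme under the Zariski hypothesis.

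First, I would reduce to a local model on $X$. Because fs log schemes admit charts $P \to \MM_X$ étale locally (and Zariski locally under the added hypothesis of the second statement), and representability is étale local on $X$, it suffices to treat the case where $X$ carries a global chart by an fs monoid $P$. In that setting a log morphism from $S \times_{\pt} \ljet{r}{m}$ into $X$ decomposes into (i) an underlying scheme-theoretic jet, represented by the classical jet scheme of the underlying scheme of $X$, and (ii) a monoid morphism $P \to \Gamma(S \times_{\pt} \ljet{r}{m}, \MM)$ compatible with the underlying ring map. The log piece can be read off from the monoidal structure of $\wedge X$ together with the extra jet coordinates that $\ljet{r}{m}$ contributes beyond the standard log point, so representability reduces to fs-ifying a finitely presented monoidal fiber product sitting over the ordinary jet scheme.

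The main obstacle is controlling this fs-ification globally: one must check that the resulting functor is a sheaf in the fs log étale topology, that saturation preserves algebraicity, and that the constructions for different étale charts of $X$ glue consistently. The arc case $m = \infty$ adds the complication of a pro-system of jet coordinates, so one must also verify that an inverse limit of representable log algebraic spaces remains representable in the relevant sense. Each of these points has a direct analogue in the treatment of $\wedge X$ in \cite{evaluationspacefamiliesofstandardlogpoints}, and the expectation is that those arguments transport with only bookkeeping modifications to accommodate the extra finitely many (or pro-finitely many) jet variables contributed by $\ljet{r}{m}$.
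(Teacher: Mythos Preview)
Your outline is broadly aligned with the paper --- both reduce to global charts and rerun the proof of \cite[Theorem 1.1.1]{evaluationspacefamiliesofstandardlogpoints} --- but two points need correction. First, your opening move of transporting representability directly from $\wedge X$ via log Hom stacks does not work as stated: the paper explicitly notes that \cite{loghomstackwise} requires the source to be geometrically reduced, which $\ljet{r}{m}$ is not for $m \geq 1$. One could invoke Wise only for $m=0$ and then separately prove the truncation maps $\ljsp{r}{X,m} \to \ev X$ are representable, but you have not said how to do the latter, and the paper opts not to go that route.

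Second, the step you leave as ``fs-ifying a finitely presented monoidal fiber product sitting over the ordinary jet scheme'' is the crux, and the paper makes it concrete as an equalizer. After localizing to a global chart $Q$, write $Q$ as a coequalizer $\NN^a \rightrightarrows \NN^b \to Q$ of free monoids. The auxiliary log schemes $X_a, X_b$ (same underlying scheme, charts $\NN^a, \NN^b$) admit strict maps to $\Aff^a, \Aff^b$, so the strict pullback square over the ordinary jet spaces reduces representability of $\ljsp{r}{X_a, m}$ and $\ljsp{r}{X_b, m}$ to the explicit computation $\ljsp{r}{\Aff^n, m} = \bigsqcup_{\NN^n} (\Aff^{nm})^\circ \times \Aff^n$ already established. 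Then $\ljsp{r}{X,m}$ is the equalizer of $\ljsp{r}{X_b, m} \rightrightarrows \ljsp{r}{X_a, m}$ taken in fs log schemes. Working in the fs category from the outset dispenses with your saturation concern, and gluing across charts is handled by the strict pullback $\ljsp{r}{U,m} = \ljsp{r}{X,m} \times_{X^\circ} U^\circ$ for strict \'etale $U \to X$, which is also what yields a log scheme (rather than merely a log algebraic space) under the Zariski-chart hypothesis.
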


A big difference from ordinary jets is that the order $m = 0$ jet space is not simply the target $\ljsp{r}{X, 0} \neq X$. The $m=0$ log jet $\ljet{r}{0}$ is not $\Spec k$. The \textit{evaluation space} $\ev X \coloneqq \ljsp{r}{X, 0}$ is the case $m = 0$.

By \cite{loghomstackwise}, $\ev X$ is representable by a log algebraic space. Section \ref{ss:moreevsp} includes tons of examples of evaluation spaces. This space is similar to its namesake \emph{evaluation stack} $\wedge X$ of \cite{evaluationspacefamiliesofstandardlogpoints}. 

\begin{proposition}[Proposition \ref{rmk:compareevalspabram}]
There is a strict quotient map $\ev X \to \wedge X$ by an action of $\GG_m$.  
\end{proposition}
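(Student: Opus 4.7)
The plan is to exhibit $\ev X$ as a $\GG_m$-torsor over $\wedge X$ with trivial $\GG_m$-action on the base, from which both the quotient statement and strictness follow formally.

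First I would recall and compare the two definitions. The evaluation stack $\wedge X$ of \cite{evaluationspacefamiliesofstandardlogpoints} is the moduli of log morphisms from a standard log point to $X$: a $T$-point is a log map from a $T$-family of standard log points (with monoid $\NN$) to $X$, and the $\GG_m$-automorphisms of the source log point are absorbed into the stack structure. The evaluation space $\ev X = \ljsp{r}{X, 0}$ instead parameterizes log maps from the zeroth log jet $\ljet{r}{0}$, which carries a preferred chart (encoded by the parameter $r$) and is therefore a rigidified version of the standard log point.

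Given this, the forgetful map that discards the chart yields a canonical morphism $\ev X \to \wedge X$. The $\GG_m$-action on $\ev X$ is the one rescaling the preferred chart on $\ljet{r}{0}$; equivalently, one precomposes the test log map with the scaling automorphism of the source log point. On $T$-points this action makes the fiber of $\ev X \to \wedge X$ over a fixed object of $\wedge X(T)$ into the $\GG_m$-torsor of chart trivializations on the underlying standard log point, which is canonically trivial after a chart is chosen. This identifies $\wedge X$ with the stack quotient $[\ev X / \GG_m]$ and exhibits $\ev X \to \wedge X$ as a $\GG_m$-torsor.

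Strictness of the map is then automatic. The log structure on $\wedge X$ is the one for which the universal log map from the source log point to $X$ is logarithmic, and the log structure on $\ev X$ is constructed identically using the universal log map on $\ljet{r}{0} \times \ev X \to X$. Since the forgetful morphism $\ljet{r}{0} \to \text{(standard log point)}$ is strict (it discards only a chart), the induced map $\ev X \to \wedge X$ is strict. The main obstacle will be lining up the definitions precisely: verifying that the parameter $r$ in $\ljet{r}{0}$ produces exactly the standard log point used in \cite{evaluationspacefamiliesofstandardlogpoints} and that the $\GG_m$-action through the chart coincides with the inertia $\GG_m$ of the standard log point. Once this matching is in place, the torsor and strictness claims are formal.
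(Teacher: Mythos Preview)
Your approach is morally correct but more laborious than the paper's. The paper observes that $P_{\NN}$ sits in a strict pullback square
\[\begin{tikzcd}
P_{\NN} \ar[r] \ar[d]       &\scr P \ar[d]         \\
\Spec k \ar[r]         &B\GG_m,
\end{tikzcd}\]
where $\scr P$ is the origin of $[\Aff^1/\GG_m]$ (the universal family of standard log points over $B\GG_m$). Since $\wedge X(T) = \Hom(T \times_{B\GG_m} \scr P, X)$ by definition, base-changing along $\Spec k \to B\GG_m$ gives $\ev X = \wedge X \times_{B\GG_m} \Spec k$ on the nose. Strictness and the $\GG_m$-torsor property are then inherited directly from $\Spec k \to B\GG_m$, with no further verification needed.

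Your route---describing the forgetful map on $T$-points and checking by hand that the fiber is the torsor of chart trivializations---would also work, but it requires you to unpack exactly what a $T$-point of $\wedge X$ is (it involves an implicit map $T \to B\GG_m$, not just a log point over $T$) and to match this with your description. Two minor points: first, the parameter $r$ is irrelevant here since $\ljet{r}{0} = P_{\NN}$ for every $r > 0$, so there is no ``lining up'' to do on that front; second, your strictness argument via ``the forgetful morphism $\ljet{r}{0} \to$ (standard log point) is strict'' is not quite the right formulation, since that map is the identity. The strictness really comes from the square above being a strict pullback. The paper's argument buys you all of this for free in two lines.
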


The purpose of the evaluation stack is to receive a map $ev : \Ml(X) \to (\wedge X)^n$. Here $\Ml(X)$ is the moduli space of log stable maps $C \to X$ from families of curves of genus $g$ with $n$ marked points. The evaluation map $ev$ restricts a curve $C \to X$ to its $n$ marked points. 

Let $\GG_m$ act on the log arc space $\ljsp{r}{X, \infty}$ by rescaling the arcs. We show the evaluation map $ev$ factors through the quotient $\stquot{\ljsp{r}{X, \infty}/\GG_m}$ of the log arc space by this action. The factorization restricts a curve $C \to X$ to the infinitesimal neighborhood of its marked points. 

\begin{theorem}[Theorem \ref{thm:jetspevmap}]
The evaluation map $ev$ on log stable maps factors through the quotient of the log jet space
\[ev : \Ml(X) \to \left(\stquot{\ljsp{r}{X, m}/\GG_m}\right)^n \to (\wedge X)^n.\]
\end{theorem}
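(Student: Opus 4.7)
The plan is to upgrade the restriction of a log stable map to the formal neighborhoods of its marked sections into a morphism landing in the log jet space, rather than just at the marked points themselves. Given an $S$-family $\pi : C \to S$ of log prestable curves with sections $\sigma_1, \ldots, \sigma_n : S \to C$ and a log stable map $f : C \to X$, I will first produce a tuple of log $m$-jets of $X$ — one at each $\sigma_i$ — that is canonical up to a $\GG_m^n$ action, and then compose with the $m = 0$ truncation and the $\GG_m$-quotient from the preceding proposition to recover the classical evaluation.

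The key geometric input is that each marked section $\sigma_i$ of a log prestable curve is cut out by a log smooth divisor in $C$, so its $m$-th order log infinitesimal thickening is, étale locally on $S$, a base change of the standard log jet $S \times \ljet{r}{m}$, with $r$ determined by the log structure of the family at $\sigma_i$. Restricting $f$ to this thickening gives a log morphism $S \times \ljet{r}{m} \to X$, i.e.\ an $S$-point of $\ljsp{r}{X, m}$. The identification of the thickening with $S \times \ljet{r}{m}$ depends on a choice of log coordinate at $\sigma_i$, and any two choices differ by rescaling, i.e.\ by the $\GG_m$-action on $\ljet{r}{m}$ that dualizes to the rescaling action on $\ljsp{r}{X, m}$. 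Hence the construction descends to a well-defined morphism
\[ ev_m : \Ml(X) \longrightarrow \left(\stquot{\ljsp{r}{X, m}/\GG_m}\right)^n. \]

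To obtain the factorization, I use the truncation $\ljsp{r}{X, m} \to \ljsp{r}{X, 0} = \ev X$, which is $\GG_m$-equivariant by construction, and then the strict quotient $\ev X \to \wedge X$ given in the proposition. Composing with $ev_m$ gives the second arrow
\[ \left(\stquot{\ljsp{r}{X, m}/\GG_m}\right)^n \longrightarrow (\wedge X)^n, \]
and it remains to check that this composition is the classical evaluation map $ev$. By construction, the $m = 0$ truncation of the $m$-jet at $\sigma_i$ is the pullback of $f$ along $\sigma_i$ as a log morphism, up to the $\GG_m$-ambiguity that is precisely what is quotiented out to pass from $\ev X$ to $\wedge X$. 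This is the definition of $ev$.

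The main obstacle is the geometric identification in the second paragraph: one must verify that the log structure on the $m$-th order neighborhood of a marked section of a log prestable curve is genuinely the pullback of the standard log jet $\ljet{r}{m}$ for the correct value of $r$, and that the automorphism group of this identification is exactly $\GG_m$, acting compatibly with the rescaling action used to form the quotient on the right. Once this local model is established — by combining the standard description of marked points as log smooth divisors with the universal property of $\ljsp{r}{X, m}$ — the rest of the argument is a direct compatibility check between the truncation map $\ljsp{r}{X, m} \to \ev X$, the conventional construction of $ev$ via $\sigma_i^* f$, and the identification $\wedge X \simeq \stquot{\ev X/\GG_m}$ from the preceding proposition.
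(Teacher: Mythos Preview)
Your overall strategy matches the paper's, but there is a real gap at precisely the point you flag as ``the main obstacle.'' You assert that two identifications of the $m$-th order neighborhood of $\sigma_i$ with $S \times \ljet{r}{m}$ differ by the $\GG_m$-action, and you suggest this follows from ``the standard description of marked points as log smooth divisors.'' It does not. The log automorphism group scheme of $\ljet{1}{m}$ over $T$ is not $\GG_m(T)$ but the larger group $\GG_m(T \times \ljet{1}{m}) \simeq A^* + tA\adj{t}_m$ (the paper even gives a characteristic-$2$ example of a non-$\GG_m$ twist built from $1+t^m$). So a priori the formal neighborhoods of marked points could be twisted by this larger group, and the quotient $\stquot{\ljsp{r}{X,m}/\GG_m}$ would not receive a map from $\Ml(X)$.

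The paper's resolution is the missing idea: one chooses the local trivialization $D_m \simeq \ljet{1}{m} \times T'$ to \emph{extend} to a trivialization $\hat C \simeq \Aff^1_{T'}$ of an \'etale neighborhood of the curve itself (this is where the hypothesis that $\sigma_i$ sits in a curve, not just an abstract thickening, is used). The transition cocycle $\tilde w$ then extends to an automorphism of $\Aff^1$ with its log structure, and those automorphisms are exactly $\GG_m$. Establishing that the \'etale-local trivialization of $D_m$ really is induced from one of $\hat C$ (rather than merely compatible with it on the closed point) takes a separate argument (Lemma~\ref{lem:finiteetaleisom} and its corollary), using topological invariance of the \'etale site and, for $m=\infty$, Bhatt's theorem. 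Your proposal does not contain this step or an alternative to it.
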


We also obtain a version of B. Bhatt's result \cite{bhatttannakaalgn} that the arc space $\jsp{X, \infty}$ is the limit of the jet spaces $\lim \jsp{X, m}$. This question was open for a long time, as there is no easy way to turn a system of $R \adj{t}_m$-points into a single point over $R \adj{t}$. The proof uses infinity categories and exchanges maps of varieties $X \to Y$ for the associated symmetric monoidal pullback functors $D_{perf}(Y) \to D_{perf}(X)$ on perfect objects in their derived categories. 

\begin{theorem}[Proposition \ref{prop:bhattthm}]

Let $X$ be a log scheme. The log arc space is the limit of the corresponding log jet spaces
\[\ljsp{r}{X, \infty} = \lim_m \ljsp{r}{X, m}.\]

\end{theorem}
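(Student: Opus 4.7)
The plan is to verify the stated equality by comparing $T$-points for a test log scheme $T$. By the representability in Theorem \ref{thm:repability} (and the universal property of log jet spaces), a $T$-point of $\ljsp{r}{X,m}$ is a log morphism $T \times \ljet{r}{m} \to X$, and a $T$-point of $\lim_m \ljsp{r}{X,m}$ is a compatible family of such. First I would show that $\ljet{r}{\infty} = \lim_m \ljet{r}{m}$ holds in the category of fs log schemes: the log structures at each level are pulled back from a common chart (the one on the log arc), so the transition maps in the system are strict and the limit is preserved by taking the product with $T$. The theorem then reduces to showing that
\[
  \Hom_{\mathrm{LogSch}}\!\left(\lim_m (T \times \ljet{r}{m}),\, X\right)
  \;\longrightarrow\;
  \lim_m \Hom_{\mathrm{LogSch}}\!\left(T \times \ljet{r}{m},\, X\right)
\]
is a bijection, i.e.\ that the cofiltered limit can be moved outside of $\Hom(-,X)$.

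On the level of underlying schemes this is precisely Bhatt's theorem, which I would invoke via the infinity-categorical Tannaka duality signaled in the introduction: a morphism $Y \to X$ of schemes corresponds to a symmetric monoidal, colimit-preserving functor $D_{\mathrm{perf}}(X) \to D_{\mathrm{perf}}(Y)$, and $D_{\mathrm{perf}}$ of the cofiltered affine limit $T \times \ljet{r}{\infty}$ is built from the levels $D_{\mathrm{perf}}(T \times \ljet{r}{m})$. This yields the bijection on underlying scheme morphisms out of the limit.

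The main obstacle is promoting a compatible system of scheme maps to a log map from the limit. Here I would use that the log structures on $\{T \times \ljet{r}{m}\}$ are all obtained by strict pullback from a single chart on $\ljet{r}{\infty}$, so that (locally on a chart $P \to \mathcal{O}_X$) a log morphism to $X$ is the data of a scheme morphism plus a factorization $P \to M_{T \times \ljet{r}{m}} \to \mathcal{O}$. A compatible sequence of log morphisms supplies such monoid data at each finite level; since the transitions are strict, the characteristic sheaves stabilize and these chart-level maps assemble into a single monoid morphism out of the chart of $T \times \ljet{r}{\infty}$. Combined with the scheme morphism produced by Bhatt, this yields a unique log morphism from the limit. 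The delicate point, and where most of the real work lies, is to verify that the monoid data really does glue across the system — strictness of the cofiltered system is what makes this go through.
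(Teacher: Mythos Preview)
Your approach is genuinely different from the paper's. The paper exploits the strict map $X \to \af[X]$ to the Artin fan: since $\af[X] \to \Spec k$ is log \'etale, every log jet space $\ljsp{r}{\af[X], m}$ collapses to the evaluation space $\ev \af[X]$, independent of $m$. One then has, for each $m$, a strict pullback square
\[
\begin{tikzcd}
\ljsp{r}{X, m} \ar[r] \ar[d] \lpbstrict & \ev \af[X] \ar[d] \\
\jsp{X^\circ, m} \ar[r] & \jsp{\af[X]^\circ, m},
\end{tikzcd}
\]
and applying Bhatt's theorem in its form for noetherian Artin stacks with affine diagonal \cite{bhatttanaka2} to the bottom row identifies it with $\jsp{X^\circ,\infty} \to \jsp{\af[X]^\circ,\infty}$; taking the limit of the fiber products finishes. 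All of the log content has been pushed into the constant upper-right corner, so no separate ``log Bhatt'' is needed. The cost is invoking Bhatt for stacks, not just schemes.

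Your direct approach could work but has a gap exactly where you flag it as delicate. Stabilization of characteristic sheaves is not sufficient to conclude that the chart maps $P \to \Gamma(M_{T \times \ljet{r}{m}})$ assemble: the log structure sits in an extension of $\bar M$ by $\OO^*$, and you must show the unit parts glue, not just the characteristic parts. Concretely, for $T = \Spec A$ one has $M_{T \times \ljet{r}{m}} \simeq M_{T \times \ljet{r}{\infty}}/(1 + t^{m+1}A\adj{t})$, and lifting a compatible family to $M_{T \times \ljet{r}{\infty}}$ is a $t$-adic convergence statement about infinite products of units, not a formal consequence of strictness. You also leave the globalization implicit: working ``locally on a chart $P \to \OO_X$'' presumes the not-yet-constructed limit morphism lands in a charted open, and gluing the resulting local log data over a cover of $X$ is not addressed. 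Both points can likely be repaired, but as written the argument does not close, and the paper's Artin-fan route sidesteps them entirely.
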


\subsection{The log tangent space of the log jet space}

We return to a question L. Ein posed to T. de Fernex. 

\begin{question}[L. Ein]\label{q:difflsarcsp}
What are the Kähler differentials of the arc space $\jsp{X, \infty}$ of a variety $X$?
\end{question}

Theorem B of \cite{arcspacedifflsdefernexdocampo} answers this question. We revisit this question for other Weil restrictions before answering it for log jet/arc spaces. This has numerical consequences (Theorem D, E loc. cit.) for the structure of the jet space. 

Write $\cal U$ for the universal log jet $\cal U \coloneqq \ljsp{r}{X, m} \times \ljet{r}{m}$, with $r, m$ understood. We answer Question \ref{q:difflsarcsp} in log geometry.

\begin{theorem}[{\cite[Theorem B]{arcspacedifflsdefernexdocampo}, Corollary \ref{cor:jetspdiffls}}]
The log K\"ahler differentials of the jet space are:
\[\lkah{\ljsp{r}{X, m}} = \rho_*\left(\lkah{X}|_{\cal U} \otimes \omega\right),\]
where $\omega$ is the dualizing sheaf of $\jet{m} \to \pt$, pulled back to $\cal U$. 

\end{theorem}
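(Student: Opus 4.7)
The plan is to mimic the proof of \cite[Theorem B]{arcspacedifflsdefernexdocampo} in the fs log category. The essential input, established in the paper's representability section, is the log Weil-restriction characterization $\Hom(T,\ljsp{r}{X,m})=\Hom(T\times\ljet{r}{m},X)$ for any fs log scheme $T$. This produces the universal log jet $u\colon\cal U\to X$ together with the projection $\rho\colon\cal U\to\ljsp{r}{X,m}$, which is the base change of $\ljet{r}{m}\to\pt$.

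First, I would describe $F$-valued log derivations of $\ljsp{r}{X,m}$ functorially in a coherent sheaf $F$ on $\ljsp{r}{X,m}$. A map $\lkah{\ljsp{r}{X,m}}\to F$ is a log lift of the identity over the strict square-zero extension $\ljsp{r}{X,m}[F]\to\ljsp{r}{X,m}$; by the Weil-restriction adjunction, such a lift corresponds to a log lift of $u$ to a morphism $\cal U[\rho^* F]\to X$. The log infinitesimal lifting theorem classifies these lifts by $\Hom_{\cal O_{\cal U}}(u^*\lkah{X},\rho^* F)$.

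Next, I would apply relative Grothendieck--Serre duality to $\rho$. Its underlying-scheme morphism is the base change of $\jet{m}\to\pt$, an Artinian Gorenstein morphism of relative dimension $0$, so $\rho$ is finite flat Gorenstein with relative dualizing sheaf the pullback $\omega$ of the dualizing sheaf of $\jet{m}$. Duality produces the natural adjunction
\[\Hom_{\cal O_{\cal U}}(G,\rho^* F)\;=\;\Hom_{\cal O_{\ljsp{r}{X,m}}}\bigl(\rho_*(G\otimes\omega),F\bigr).\]
Setting $G=u^*\lkah{X}=\lkah{X}|_{\cal U}$ and invoking Yoneda yields the claimed isomorphism.

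The main obstacle is the log deformation-theoretic step: one must verify that strict square-zero log extensions pull through the Weil-restriction adjunction correctly and that first-order log lifts of $u$ are classified by $\Hom(u^*\lkah{X},\rho^* F)$ exactly as in the classical case. Once this is in place, the duality step reduces to a computation on the underlying (strict) morphism $\rho$ and is essentially routine, since $\jet{m}$ is Artinian Gorenstein.
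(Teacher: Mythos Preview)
Your proposal is correct and uses the same two ingredients as the paper: the Weil-restriction adjunction for $\ljet{r}{m}\to\pt$ and Grothendieck duality for the finite flat Gorenstein map $\rho$. The difference is organizational. You run both steps at once against a test sheaf $F$, computing $\Hom(\lkah{\ljsp{r}{X,m}},F)$ via square-zero extensions and then applying duality to the resulting $\Hom_{\cal U}(u^*\lkah{X},\rho^*F)$; this is close in spirit to the original argument of de~Fernex--Docampo. The paper instead separates the two ingredients into independent geometric lemmas: first it shows, purely from the functor of points, that $\Tl{\ljsp{r}{X,m}}\simeq \WR_{\cal U/\ljsp{r}{X,m}}(\Tl{X}|_{\cal U})$ (and more generally $\ljsp{r}{\WR Y,k}\simeq\WR(\ljsp{r}{Y,k})$), and then computes the Weil restriction of any $\VV(F)$ as $\VV(\rho_*(F\otimes\omega))$ via duality. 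Combining the two gives the corollary. The paper's factorization buys an extra statement (jets of Weil restrictions are Weil restrictions of jets) that your test-sheaf argument does not directly yield, while your route is more direct if one only wants the differentials.
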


We give a new geometric proof of \cite{arcspacedifflsdefernexdocampo} similar to that of \cite{hasseschmidtmodulechiu} and generalize the result. The proof shows that the log tangent space of the log jet space is the Weil restriction
\[\Tl{\ljsp{r}{X, m}} \simeq \WR_{\cal U/\ljsp{r}{X, m}} \left(\Tl{X}|_{\cal U}\right)\]
of the log tangent space $\Tl{X}$ of the target along the universal jet $\cal U \to \ljsp{r}{X, m}$. 

We can describe the tangent space of general Weil restrictions, including jet spaces, configuration spaces, and the monogeneity spaces of \cite{mymonogeneity1}. 

The formula for the log tangent space of the jet space lets one compute the embedding dimension (Definition \ref{defn:embdim}) of the log jet space at a point $\alpha_m \in \ljsp{r}{X, m}$.

\begin{theorem}[{\cite[Lemma 8.1]{arcspacedifflsdefernexdocampo}, Theorem \ref{thm:embdim}}]

Suppose $X$ is a finite type, log smooth log scheme and the ground field $k$ is perfect. Let $\alpha_m \in \ljsp{r}{X, m}$ be a point which lifts to an arc $\alpha_\infty \in \ljsp{r}{X, \infty}$. The embedding dimension $\embdim\left(\OO_{\ljsp{r}{X, m}, \alpha_m}\right)$ of the log jet space at $\alpha_m$ is given by
\[
\begin{split}
\embdim\left(\OO_{\ljsp{r}{X, m}, \alpha_m}\right) = &d_m(m+1) + \ord_{\alpha_\infty} \left(\Fitt^{d_m}(\lkah{X}|_{\alpha_\infty \times \ljet{r}{\infty}})\right) \\
&- \rk \gp{\bar M}_{\alpha_m} + N - \dim_k \alpha^\circ.
\end{split}
\]
The number $N$ is the number of irreducible elements in the stalk of the sheaf $\bar M_\alpha$ and $d_m$ is the Betti number of $\lkah{X}$ at $\alpha_m$. 

\end{theorem}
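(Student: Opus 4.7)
The plan is to reduce the statement to the formula for $\lkah{\ljsp{r}{X,m}}$ provided by the preceding theorem and then carry out the corresponding calculation one step at a time, following the strategy of \cite{arcspacedifflsdefernexdocampo} in the non-log setting and layering on the log corrections. The embedding dimension of a Noetherian local ring $(A,\mathfrak{m})$ with residue field $\kappa$ is $\dim_\kappa \mathfrak{m}/\mathfrak{m}^2$, which in our log setting differs from $\dim_\kappa \lkah{\ljsp{r}{X,m}} \otimes \kappa(\alpha_m)$ by precisely the log correction $-\rk \gp{\bar M}_{\alpha_m} + N$ together with the residue field correction $-\dim_k \alpha^\circ$; these identifications follow from the standard exact sequence relating $\lkah{}$ to $\kah{}$ and from the conormal presentation of $\mathfrak{m}/\mathfrak{m}^2$ for a scheme of finite type over a perfect field.

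Having made this reduction, the task is to compute $\lkah{\ljsp{r}{X,m}} \otimes \kappa(\alpha_m)$. I apply the formula $\lkah{\ljsp{r}{X,m}} = \rho_*(\lkah{X}|_{\cal U} \otimes \omega)$ with $\rho\colon \cal U = \ljsp{r}{X,m} \times \ljet{r}{m} \to \ljsp{r}{X,m}$. Because $\rho$ is affine (the fiber being the Artinian log scheme $\ljet{r}{m}$) and $\omega$ is relatively coherent, proper base change gives
\[
\lkah{\ljsp{r}{X,m}} \otimes \kappa(\alpha_m) \;=\; \Gamma\bigl(\ljet{r}{m},\, \alpha_m^*\lkah{X} \otimes \omega\bigr).
\]
The hypothesis that $\alpha_m$ lifts to an arc $\alpha_\infty$ then lets me realize $\alpha_m^*\lkah{X}$ as the truncation of the pullback $\alpha_\infty^*\lkah{X}$ to the Artinian thickening.

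Next I exploit the fact that the underlying scheme of $\ljet{r}{\infty}$ is the spectrum of a DVR. Since $X$ is log smooth over the perfect field $k$, $\lkah{X}$ is locally free, but its pullback $\alpha_\infty^*\lkah{X}$ acquires torsion coming from how $\alpha_\infty$ meets the log strata. By the structure theorem for modules over a DVR, $\alpha_\infty^*\lkah{X} \cong R^{d_m} \oplus \bigoplus_i R/(t^{e_i})$, and by the standard relation between Fitting ideals and invariant factors, $\ord_{\alpha_\infty}(\Fitt^{d_m}(\lkah{X}|_{\alpha_\infty \times \ljet{r}{\infty}})) = \sum e_i$. Computing global sections after truncation and tensoring with $\omega$ then yields $d_m(m+1) + \sum_i \min(e_i, m+1)$. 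Combining this with the DVR presentation gives precisely the sum $d_m(m+1) + \ord_{\alpha_\infty}(\Fitt^{d_m}(\cdots))$ in the target formula.

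The main obstacle will be the careful bookkeeping of the log contribution. Unlike the classical case, one must track how the ghost monoid $\bar M_{\alpha_m}$ interacts with both the dualizing sheaf $\omega$ on $\ljet{r}{m}$ and the Fitting stratification of $\lkah{X}$; in particular, verifying that the corrections coming from passing between $\lkah{}$ and $\kah{}$ contribute exactly $-\rk \gp{\bar M}_{\alpha_m} + N$ requires understanding the local structure of $\ljsp{r}{X,m}$ at $\alpha_m$ via the chart-based presentation from Theorem \ref{thm:repability} and matching the irreducible generators of $\bar M_{\alpha_m}$ to torsion summands supported on the log boundary. Making this matching precise, and ensuring that Fitting-ideal orders are computed in the log rather than classical sense, is where the bulk of the work lies.
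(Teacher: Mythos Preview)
Your overall strategy matches the paper's: compute $\dim_{L_m}\lkah{\ljsp{r}{X,m}}|_{\alpha_m}$ via the pushforward formula and the PID structure theorem, then correct to obtain the embedding dimension. Two points need fixing.

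First, a conceptual slip. You write that since $X$ is log smooth, $\lkah{X}$ is locally free, ``but its pullback $\alpha_\infty^*\lkah{X}$ acquires torsion coming from how $\alpha_\infty$ meets the log strata.'' Pullback of a locally free sheaf is locally free; on the DVR underlying $\alpha_\infty\times\ljet{r}{\infty}$ it is therefore free. In the log smooth case there is no torsion, $d_m=\dim X$, and the Fitting term vanishes. Torsion only appears when $X$ is not log smooth, and in that regime the paper only claims the inequality. This does not break your argument---the computation still goes through with all $e_i=0$---but your explanation of where the Fitting contribution comes from is backwards.

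Second, the log correction. Your plan routes through the comparison sequence between $\lkah{}$ and $\kah{}$ and a ``chart-based presentation,'' matching irreducible generators of $\bar M_{\alpha_m}$ to torsion summands. The paper avoids all of this. It works entirely on the log side: take a strict \'etale neighborhood $J$ of $\alpha_m$ in the jet space, let $D=\overline{\{\alpha_m\}}\subseteq J$ with ideal $I$, and apply the log cotangent complex triangle for $D\to J\to\Spec k$ to obtain
\[
0\to h^{-1}\lccx{\alpha_m}\to I/I^2\to \lkah{J}|_{\alpha_m}\to \lkah{\alpha_m}\to 0,
\]
exact on the left precisely because $J$ is log smooth. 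The correction is then the single number $\dim\lkah{\alpha_m}-\dim h^{-1}\lccx{\alpha_m}$, and this is computed once and for all for any log point (Proposition~\ref{prop:logcotcplxlogpoint}, Lemma~\ref{lem:eulerchar2termlogcot}) as $\rk\gp Q - N + \dim_k\alpha^\circ$, using only the strict vertex embedding $\alpha\hookrightarrow\Aff_{Q,L}$. No passage through ordinary differentials, no chart bookkeeping on the jet space, and no matching of monoid generators to torsion is required.
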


This formula gives an inequality if $X$ is not log smooth. The embedding dimension is familiar to Gromov--Witten theorists as the dimension of a certain \textit{intrinsic normal sheaf} (after Definition \ref{defn:embdim}).

The appendix checks that the jet space of an algebraic stack is an algebraic stack, an enjoyable exercise in the definitions. Log algebraic stacks then have representable log jet spaces as well (Corollary \ref{cor:ljetrepablestacks}).

The new log structure can describe the space of arcs of fixed contact order along a divisor ${\rm Cont}^{\geq m}(D)$ used in motivic integration. Recent approaches to motivic integration using Berkovich spaces \cite{tommasodefernexberkovichjets} ignore jets entirely and work only with valuations. We hope to use these spaces for motivic integration and compare with \cite{tommasodefernexberkovichjets} in future work.

\subsection{Conventions}

We consider schemes as a full subcategory of log schemes, endowing a scheme $S$ with the trivial log structure 
\[M_S = \OO_S^*.\]
Log schemes $X$ have underlying scheme $X^\circ$, which is automatically given its trivial log structure $M_{X^\circ} = \OO_X^*$. We use adjectives like ``ordinary'' to refer to the case of schemes, as opposed to log schemes. 

Pullbacks $X \times_Z Y$ in the category of fs log schemes differ from the pullback of the underlying schemes. We write $\times^\ell$ for the fs pullback, $\times$ for that in schemes, and $\times^{\msout{\ell}}$ if they happen to coincide. Cartesian diagrams similarly have $\ulcorner, \ulcorner \ell,$ or $\ulcorner \msout{\ell}$ if they are of ordinary schemes, fs log schemes, or both. If the distinction is important, we emphasize it -- these symbols merely indicate to the reader which is the relevant category.

We work over a field $k$ of arbitrary characteristic, not necessarily algebraically or separably closed. We require $k$ to be perfect in Section \S \ref{s:fittingideals}, but not in the rest of the paper. 

If $P$ is a monoid, write
\[\Aff_P \coloneqq \Spec k[P].\]
The Cartier dual $\Aff_{\gp P}$ is the dense torus. The stack quotient 
\[\af[P] \coloneqq \bra{\Aff_P/\Aff_{\gp P}}\]
is an Artin cone, and stacks with \'etale covers by such are Artin fans. We write $P^+ = P \setminus P^*$ for the nonunits of $P$; e.g. $\NN^+ = \NN \setminus 0$. 

We write $\smet(S)$ for the small \'etale site of a scheme. If $Q$ is a set, write $\underline{Q}$ for the constant sheaf on the small \'etale site $\smet(S)$ of some scheme $S$.

If $F$ is a coherent sheaf on a scheme $X$, write 
\[\VV(F) \coloneqq \SSpec{X} \Sym F\]
for the associated scheme affine over $X$. If $F$ is locally free, $\VV(F)$ is the associated vector bundle.   

Write $\lccx{X/Y}$ for the log cotangent complex \cite{logcotangent} of a morphism $X \to Y$ and $h^{-1} \lccx{X/Y}$ for its cohomology group in degree $-1$. 

We write $\Log$ for Olsson's stack of fs log structures (``$\cal T or$'' in the original).

\subsection{Acknowledgments}

This paper could not have been written without Tommaso de Fernex, who gracefully declined to be a coauthor. The ideas and results were nevertheless obtained in collaboration with him throughout. Gebhard Martin helped with the original definitions and earliest examples. 

Jonathan Wise fixed a mistake in the essential Example \ref{ex:evspA1}, without which the paper falls apart. Section \ref{s:logtansplogjetsp} also arose from conversations with Wise, where he patiently explained the notion of an obstruction theory to the author. 

Thanks also go to Sarah Arpin, Andreas Gross, Sebastian Casalaina-Martin, Sam Molcho, and Johannes Nicaise. 

David Holmes tremendously improved the manuscript, especially the introduction. The author is grateful for his careful reading and rereading through versions of the present article. His comments also inspired the relation to log Gromov--Witten theory in Section \S \ref{ss:logjetloggw}. 

The author received partial support from NSF RTG
grant \#1840190 during the writing of this paper.

\section{Log jet spaces}

This section shows that $\ljsp{r}{X, m}$ is representable by a log algebraic space, or log scheme if $X$ admits charts Zariski locally. The proof exploits the relationship of $\ljsp{r}{X, m}$ with the evaluation stack $\wedge X$ in log Gromov--Witten theory.

Work over an arbitrary base field $k$ and give $\Spec k$ trivial log structure. Write
\[\begin{tikzcd}
k \adj{t}_m \coloneqq k[t]/t^{m+1},         &\text{for }m \neq \infty      \\
k \adj{t}_\infty \coloneqq k \adj{t}        &\text{for }m = \infty   \\
\jet{m} \coloneqq \Spec k \adj{t}_m.
\end{tikzcd}\]
This notation lets us write jets and arcs uniformly. Ordinary jet spaces $\jsp{X, m}$ represent maps from these jets to the scheme $X$:
\begin{equation}\label{eqn:ordjetfunctoropoints}
\jsp{X, m}(\Spec A) \coloneqq \Hom(\Spec A \times \jet{m}, X).   
\end{equation}
If $m = \infty$, one takes the fiber product $\Spec A \times \jet{\infty}$ as formal schemes to obtain $\Spec A \adj{t}$. We endow the jets $\jet{m}$ with different log structures to define divers log jet spaces. 

\begin{definition}

Let $r > 0$ be a positive integer. Define a log scheme $\ljet{r}{m}$ with underlying scheme $\jet{m}$ and the log structure associated to
\[\NN \to k \adj{t}_m \qquad 1 \mapsto t^r.\]
Extend this definition to $r = \infty$ by taking the map 
\[\NN \to k \adj{t}_m \qquad 1 \mapsto ``{t^\infty}" = 0.\]

\end{definition}

The case $r = 1$ records the $t$-adic valuation of the jet $\jet{m}$.

\begin{definition}

Let $X$ be an fs log scheme, $r > 0$ an integer, and $T$ an fs log scheme over $k$. Define the {\em log jet space} $\ljsp{r}{X, m}$ by
\[\ljsp{r}{X, m}(T) \coloneqq \Hom(T \times \ljet{r}{m}, X). \]
The fiber product takes place in the category of fs log schemes, or formal fs log schemes if $m = \infty$. The \textit{log arc space} refers to the case $m = \infty$. 

\end{definition}

The log jet space is functorial in $X$. If $T = \Spec A$ is affine, the underlying scheme of $T \times \ljet{r}{m}$ is $\Spec A \adj{t}_m$\footnote{
The schematic and fs fiber products coincide 
\[(T \times^\ell \ljet{r}{m})^\circ = T^\circ \times \jet{m}\]
because $\ljet{r}{m} \to \pt$ is integral and saturated.
}.

\begin{remark}

S. Dutter \cite{sethdutterthesislogjets} and B. Fleming \cite{balinthesis} used log jet spaces that correspond to taking $r = 0$. We defer the case $r = 0$ to Section \ref{s:r=0} and do not allow $r = 0$ until then. 

Their version has the advantage that the first jet space is the log tangent space $\ljsp{0}{X, 1} = \Tl{X}$ and the zeroth $\ljsp{0}{X, 0} = X$ coincides with $X$. Their jets have trivial log structure however, so the $t$-adic valuation encoding contact order information is not visible in their log structure. 

\end{remark}

\begin{definition}\label{defn:evalsp}

Let $X$ be an fs log scheme and write $P_{\NN} = \ljet{r}{0}$ for the log $0$-jet, the point $\Spec k$ with rank-one log structure. The \textit{evaluation space} is the log jet space with $m = 0$
\[\ev X \coloneqq \ljsp{r}{X, 0}.\]
Its functor of points on fs log schemes $T$ is
\[\ev X(T) \coloneqq \Hom(T \times P_{\NN}, X).\]

\end{definition}

Our first examples are the affine spaces $X = \Aff^1, \Aff^n$. To describe $\ljsp{r}{X, m}$ for $X = \Aff^1$, we need to understand the truncation maps and the ``base case'' $\ev X = \ljsp{r}{X, 0}$ with $m = 0$.

\begin{remark}[Truncation maps]

For any $r$, there are strict closed immersions
\[\ljet{r}{0} \subseteq \ljet{r}{1} \subseteq \cdots \subseteq \ljet{r}{\infty}. \]
Restricting along these maps yields truncation maps between the log jet spaces:
\[\ljsp{r}{X, \infty} \to \cdots \to \ljsp{r}{X, m+1} \to \ljsp{r}{X, m} \to \cdots \to \ljsp{r}{X, 0}.\]
Each map truncates an $(m+1)$-jet to an $m$-jet. 

\end{remark}

The log structure on the log jets is
\[\Gamma(M_{\ljet{r}{m}}) = \NN \times \Gamma((k \adj{t}_m)^*).\]
The map $M_{\ljet{r}{m}} \to \OO_{\ljet{r}{m}}$ sends $(n, u)$ to $u \cdot t^n \in k \adj{t}_m$. The monoid $M_{\ljet{r}{m}}$ is isomorphic to $k \adj{x}_m \setminus 0$, and the map sends $x$ to $t^r$. These log structures are discussed for discrete valuation rings in \cite[\S III.1.5]{ogusloggeom} and appear in \cite[Example 6.13]{logcotangent}. 

Whenever $r > m$, $t^r = 0$ and all the log jets coincide:
\[\ljet{m+1}{m} = \ljet{m+2}{m} = \cdots = \ljet{\infty}{m}.\]

\begin{remark}[Trivial log structure]

If $X = X^\circ$ has trivial log structure, the log jet space coincides with the ordinary jet space: 
\[\ljsp{r}{X, m} = \jsp{X, m}.\]
Even if $X$ does not have trivial log structure, there is a natural map $X \to X^\circ$ inducing a map from the log jet space to the ordinary one: 
\[\ljsp{r}{X, m} \to \jsp{X, m}. \]
This sends log jets to the underlying diagram of ordinary jets. 

\end{remark}

\begin{warning}

There is no map $\ljsp{r}{X, m} \dashrightarrow X$ of log schemes. The truncation to order $m=0$ gives a map to the evaluation space $\ljsp{r}{X, m} \to \ev X$ instead. 

If $Z = Z^\circ$ has trivial log structure, we know $\ev Z = Z$. The map $X \to X^\circ$ then yields a map $\ev X \to X^\circ$ to the underlying scheme, but this is not compatible with log structures. 

This happens because the zero jet $P_{\NN}$ does not receive a map from the point $\Spec k \dashrightarrow P_{\NN}$. Such a map would entail a diagram 
\[\begin{tikzcd}
k^\ast \ar[d]         &\NN \oplus (k \adj{t}_m)^\ast \ar[d] \ar[l, dashed]         \\
k       &k \adj{t}_m, \ar[l]
\end{tikzcd}\]    
but the generator $1 \in \NN$ maps to $t^r$, which goes to $0 \in k$. Since $0 \in k$ is not in the image of $k^\ast \subseteq k$, there's no way to extend the ring map to one of log structures. 

There is a map $P_{\NN} \to P_\NN^\circ = \Spec k$ the other way. This gives a map $X \to \ev X$ which is always an inclusion of components. 

\end{warning}

Pointed log structures \cite{pointedlogstructures} and idealized log schemes \cite[\S III.1.3]{ogusloggeom} allow a map $\Spec k \dashrightarrow P_{\NN}$ by adding a preimage ``$\infty$'' of $0 \in k$ to the sheaf of monoids. Then closed immersions cut out by monoidal ideals can extend to immersions of log schemes with the natural log structures on each. 

One can think of the map $\ev X \to X^\circ$ as coming from the universal map $\ev X \times P_{\NN} \to X$. The source $\ev X \times P_{\NN}$ is not the same as $\ev X$, but it has the same underlying scheme. So the universal map $\ev X \times P_{\NN} \to X \to X^\circ$ to the underlying scheme of $X$ factors through $(\ev X \times P_{\NN})^\circ = (\ev X)^\circ$.

We will often use some basic properties of $\ljsp{r}{X, m}$ without mention. We introduce these before moving on to the case $X = \Aff^1$.

\begin{lemma}\label{lem:letpb}

If $X \to Y$ is log \'etale, the truncation maps fit in a cartesian diagram of fs log schemes 
\begin{equation}\label{eqn:wedgepbljets}
\begin{tikzcd}
\ljsp{r}{X, m+1} \ar[r] \ar[d] \lpb      &\ljsp{r}{Y, m+1} \ar[d]         \\
\ljsp{r}{X, m} \ar[r] \ar[d] \lpb      &\ljsp{r}{Y, m} \ar[d]         \\
\ev X \ar[r]        &\ev Y.
\end{tikzcd}    
\end{equation}
This applies to all $m$, including $m = \infty$. 

\end{lemma}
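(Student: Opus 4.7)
The plan is to check cartesian-ness at the level of the functor of points. For an fs log scheme $T$, a $T$-point of the pullback $\ljsp{r}{Y, m+1} \times^\ell_{\ljsp{r}{Y, m}} \ljsp{r}{X, m}$ is by definition a commutative diagram
\[\begin{tikzcd}
T \times \ljet{r}{m} \ar[d, hook] \ar[r]        & X \ar[d]         \\
T \times \ljet{r}{m+1} \ar[r] \ar[ur, dashed]   & Y,
\end{tikzcd}\]
and the claim reduces to showing the dashed arrow exists uniquely. So I will exhibit $T \times \ljet{r}{m} \hookrightarrow T \times \ljet{r}{m+1}$ as a strict square-zero thickening of fs log schemes and then invoke the log étale lifting property for $X \to Y$.

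First I would verify that $\ljet{r}{m} \hookrightarrow \ljet{r}{m+1}$ is a strict closed immersion whose ideal $(t^{m+1}) \subseteq k\adj{t}_{m+1}$ squares to zero. Strictness follows because both log structures are pulled back from the single chart $\NN \to k\adj{t}_{m'}$, $1 \mapsto t^r$, so the characteristic sheaves $\bar M$ on both sides agree. The same reasoning (with $(t^{m+1})$ replaced by the whole of $(t)$) applies to $\ljet{r}{0} \hookrightarrow \ljet{r}{m}$ after iterating. Taking the fs product with $T$ preserves strictness and the square-zero ideal, exactly because $\ljet{r}{m+1} \to \pt$ is integral and saturated, as invoked in the footnote right after the definition of $\ljsp{r}{X, m}$; hence the resulting map $T \times \ljet{r}{m} \hookrightarrow T \times \ljet{r}{m+1}$ is a strict nilpotent closed immersion.

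Now since $X \to Y$ is log étale, it has the unique infinitesimal lifting property along strict square-zero thickenings of fs log schemes, so the dashed arrow exists and is unique. This is precisely the bijection on $T$-points that makes each of the two squares in \eqref{eqn:wedgepbljets} cartesian for finite $m$.

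The main obstacle is the case $m = \infty$, where the defining ideal of $\ljet{r}{0} \hookrightarrow \ljet{r}{\infty}$ is $(t) \subset k\adj{t}$, which is only topologically nilpotent. I would handle it by reducing to the finite case via Proposition \ref{prop:bhattthm}: the identification $\ljsp{r}{X, \infty} = \lim_m \ljsp{r}{X, m}$ is functorial in $X$, and the limit of a system of cartesian squares in fs log algebraic spaces is cartesian. Applying this to the cartesian squares already established for each finite $m$, one obtains the $m = \infty$ case for free. Alternatively, one can work directly with formal fs log schemes, noting that the $T$-points of $\ljsp{r}{X, \infty}$ are maps from the formal thickening $T \times \ljet{r}{\infty}$ and that log étaleness supplies formal lifts along the adic closed immersion $T \times \ljet{r}{m} \hookrightarrow T \times \ljet{r}{\infty}$ by inductively lifting through each square-zero stage.
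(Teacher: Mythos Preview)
Your proof is correct and follows the same approach as the paper: the finite case is exactly the unique infinitesimal lifting property of log \'etale maps along the strict square-zero thickenings $T \times \ljet{r}{m} \hookrightarrow T \times \ljet{r}{m+1}$, and the $m = \infty$ case is deduced from Proposition~\ref{prop:bhattthm}. The paper simply records the finite case as ``evident'' and defers the arc case to Bhatt's theorem, so you have spelled out what the paper leaves implicit.
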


\begin{proof}

This is evident for $m$ finite. For $m = \infty$, use Bhatt's theorem below (Proposition \ref{prop:bhattthm}). 

\end{proof}

More is true: The map $\ljsp{r}{X, m} \to \ljsp{r}{Y, m}\times_{\ev Y}^\ell \ev X$ is surjective if $f$ is merely log smooth and injective if $f$ is log unramified.

\begin{remark}
If $X \to Y$ is strict, the square
\[\begin{tikzcd}
\ljsp{r}{X, m} \ar[r] \ar[d] \lpbstrict      &\ljsp{r}{Y, m} \ar[d]         \\
\jsp{X, m} \ar[r]      &\jsp{Y, m}
\end{tikzcd}\]
is cartesian in fs log schemes and underlying schemes. The jet spaces $\jsp{X, m}, \jsp{Y, m}$ here have trivial log structures. When $m=0$, $\jsp{X, m} = X^\circ$ and the square is even simpler
\[\begin{tikzcd}
\ev X \ar[r] \ar[d] \lpbstrict      &\ev Y \ar[d]         \\
X^\circ \ar[r]      &Y^\circ.
\end{tikzcd}\]

\end{remark}

\begin{remark}\label{rmk:logjetspacepb}

If a diagram
\[\begin{tikzcd}
X \ar[r] \ar[d] \lpb       &Y \ar[d]      \\
Z \ar[r]       &W,
\end{tikzcd}\]
is an fs pullback square, so is the diagram of log jet spaces:
\[\begin{tikzcd}
\ljsp{r}{X, m} \ar[r] \ar[d] \lpb       &\ljsp{r}{Y, m} \ar[d]      \\
\ljsp{r}{Z, m} \ar[r]       &\ljsp{r}{W, m}.
\end{tikzcd}\]
\end{remark}

\subsection{The case $X = \Aff^1$}

Recall the functor of points of $\Aff^1$ with its natural log structure.

\begin{example}[Maps into $\Aff^1$]

A map from any log scheme $T$ to $\Aff^1$ is a section $s \in \Gamma(M_T)$. The square 
\[\begin{tikzcd}
\Gamma(M_T) \ar[d]        &\NN \times (k[x])^* \ar[l] \ar[d]        \\
\Gamma(\OO_T)       &k[x] \ar[l]
\end{tikzcd}\]
is determined by where $1 \in \NN$ goes under the upper horizontal arrow. 

\end{example}

We need to understand the truncation maps for $X = \Aff^1$. Lifts of an $S = \Spec A$-valued $m$-jet to an $(m+1)$-jet are the dashed arrows
\[\begin{tikzcd}
&\ljsp{r}{\Aff^1, m+1} \ar[d]    &\phantom{a}\ar[d, phantom, "\leftrightsquigarrow"]       &S \times \ljet{r}{m} \ar[r] \ar[d]         &\Aff^1             \\
S \ar[r] \ar[ur, dashed]    &\ljsp{r}{\Aff^1, m}     &\phantom{a}       &S \times \ljet{r}{m+1} \ar[ur, dashed]
\end{tikzcd}.\]

There is a short exact sequence for the units of the closed immersion $S \times \ljet{r}{m} \subseteq S \times \ljet{r}{m+1}$:
\[1 \to 1 + A t^{m+1} \to A\adj{t}_{m+1} \to A \adj{t}_m \to 1.\]
The kernel is isomorphic to the additive group $1 + At^{m+1} \simeq (A, +)$ of the ring $A$. Similarly, the restriction map on log structures
\[M_{S \times \ljet{r}{m+1}} \to M_{S \times \ljet{r}{m}}\]
is a torsor for $1 + A t^{m+1}$. It is a torsor and not merely a pseudo-torsor because $\Aff^1$ is log smooth and lifts exist locally.

Lifting an $m$-jet to an $(m+1)$-jet requires lifting a section $s \in \Gamma(M_{S \times \ljet{r}{m}})$ along this restriction map. The truncation maps are therefore torsors for the additive group of $A$, represented by the group scheme $(\Aff^1)^\circ$.

This argument generalizes to $X = \Aff^n$.

\begin{lemma}\label{lem:logsmaffinebund}

Let $X = \Aff^n$. The restriction maps
\[\ljsp{r}{\Aff^n, m+1} \to \ljsp{r}{\Aff^n, m}\]
are strict torsors for $(\Aff^n)^\circ$. In particular, they are affine bundles. 

\end{lemma}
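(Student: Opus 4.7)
The plan is to reduce to the already-sketched $n=1$ case using compatibility of log jet spaces with products, and then to upgrade the pseudo-torsor on the $\Aff^1$-truncation to a strict $(\Aff^1)^\circ$-torsor. Because $\ljsp{r}{-, m}$ is a Hom-functor, it carries fs products to products. Writing $\Aff^n = (\Aff^1)^n$ in fs log schemes, the truncation $\ljsp{r}{\Aff^n, m+1} \to \ljsp{r}{\Aff^n, m}$ decomposes as an $n$-fold product of $\Aff^1$-truncations; since $(\Aff^n)^\circ = ((\Aff^1)^\circ)^n$ and a product of strict $G$-torsors is a strict $G^n$-torsor, the problem reduces to $n=1$.

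For $\Aff^1$ I would formalize the sketch directly preceding the lemma. An $S=\Spec A$-valued $m$-jet corresponds to a section $s_m \in \Gamma(M_{S \times \ljet{r}{m}})$, and a lift to an $(m+1)$-jet is a section $s_{m+1}$ restricting to $s_m$. Using the chart $\NN \oplus \OO^\ast$ for $M_{\ljet{r}{m}}$ -- in which the $\NN$-factor is pinned by $1 \mapsto t^r$ and only the $\OO^\ast$-factor varies between the two jets -- the short exact sequence
\[1 \to 1 + A t^{m+1} \to (A\adj{t}_{m+1})^\ast \to (A\adj{t}_m)^\ast \to 1\]
exhibits the set of lifts as a pseudo-torsor under $1 + A t^{m+1} \cong (A,+)$, and $(A,+)$ is the functor of points of $(\Aff^1)^\circ$. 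Local existence of lifts follows from log smoothness of $\Aff^1 \to \Spec k$ applied to the strict square-zero thickening $S \times \ljet{r}{m} \hookrightarrow S \times \ljet{r}{m+1}$ (the ideal $(t^{m+1})$ in $A\adj{t}_{m+1}$ squares to zero since $t^{2(m+1)}=0$), so the pseudo-torsor is actually a torsor.

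Strictness comes for free: $(\Aff^n)^\circ$ carries the trivial log structure, so on a trivializing étale cover the truncation becomes the projection $U \times (\Aff^n)^\circ \to U$ with log structure pulled back from $U$, and hence the torsor is a strict morphism of log schemes. In particular it is an affine bundle. The main obstacle is really the log-structure bookkeeping in the previous paragraph -- checking that the entire discrepancy between two lifts of $s_m$ lives in the unit subgroup $1 + A t^{m+1}$ and not elsewhere in $M$. This is transparent from the explicit $\NN \oplus \OO^\ast$ chart but worth spelling out carefully, and it is the only step where the specific structure of $\Aff^1$ (rather than an arbitrary log smooth target) is used.
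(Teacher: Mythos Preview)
Your proposal is correct and follows essentially the same approach as the paper: the paper's proof is precisely the $\Aff^1$ discussion you formalize (the short exact sequence of units giving a pseudo-torsor under $1+At^{m+1}\simeq (A,+)$, upgraded to a torsor via log smoothness), followed by the one-line ``This argument generalizes to $X=\Aff^n$.'' Your explicit reduction to $n=1$ via the product decomposition $\ljsp{r}{\Aff^n,m}=(\ljsp{r}{\Aff^1,m})^n$ is the same mechanism the paper invokes later in the corollary, and your remarks on strictness and on the $\NN\oplus\OO^*$ chart bookkeeping are accurate elaborations of points the paper leaves implicit.
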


These affine bundles are rarely vector bundles in general. For $X = \Aff^n$, we will be able to trivialize them.

\begin{corollary}

Let $X$ be a log smooth log scheme of dimension $n$. The truncation maps 
\[\ljsp{r}{X, m+1} \to \ljsp{r}{X, m}\]
are strict affine bundles of relative dimension $n$. 

\end{corollary}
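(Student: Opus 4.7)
The plan is to apply the log infinitesimal lifting criterion for log smoothness, generalizing the computation done for $\Aff^n$ in Lemma~\ref{lem:logsmaffinebund}. Fix a point $\alpha \in \ljsp{r}{X, m}(S)$ for an fs log scheme $S$, corresponding to a map $\hat\alpha \colon S \times^\ell \ljet{r}{m} \to X$. Lifts of $\alpha$ to $\ljsp{r}{X, m+1}(S)$ are exactly extensions of $\hat\alpha$ along the closed immersion $\iota \colon S \times^\ell \ljet{r}{m} \hookrightarrow S \times^\ell \ljet{r}{m+1}$.

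First I would check strictness of $\iota$ and identify its ideal. Both $\ljet{r}{m}$ and $\ljet{r}{m+1}$ carry the log structure associated to the same prelog chart $\NN \to k\adj{t}$, $1 \mapsto t^r$, so $\iota$ is strict. The ideal $I = (t^{m+1}) \subset k\adj{t}_{m+1}$ squares to zero (since $2(m+1) \geq m+2$) and is annihilated by $t$, so $I \cong \OO_S$ as an $\OO_{S \times^\ell \ljet{r}{m}}$-module, supported on the zero section $S$.

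Next I would invoke the log infinitesimal lifting criterion for the log smooth $X$: such lifts exist strict étale locally and form a torsor under $\HHom_{\OO}(\hat\alpha^* \lkah{X}, I) \cong \hat\alpha^* \Tl{X} \otimes I$. Log smoothness of $X$ of dimension $n$ forces $\lkah{X}$ to be locally free of rank $n$. Pushing forward to $S$ (using that $I$ is supported on the zero section) then exhibits this as a torsor under the rank-$n$ vector bundle $\VV\bigl(\hat\alpha|_S^* \lkah{X}\bigr)$ on $S$. Thus $\ljsp{r}{X, m+1} \to \ljsp{r}{X, m}$ is, strict étale locally on the base, a torsor for a rank-$n$ vector bundle; strictness of the resulting affine bundle follows from strictness of $\iota$, since the log structure on a lift is uniquely determined by its underlying scheme-theoretic lift.

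The main obstacle will be the pushforward bookkeeping: carefully identifying the torsor structure group as a rank-$n$ bundle on $S$, and upgrading local triviality of these vector-bundle torsors into a global strict affine bundle structure via standard strict étale descent. An alternative reduction via log étale charts to a standard local model (e.g.\ $\Aff_P \times \Aff^{n-r}$) combined with Lemma~\ref{lem:letpb} would require first extending Lemma~\ref{lem:logsmaffinebund} beyond $\Aff^n$, so the lifting-criterion route seems most direct.
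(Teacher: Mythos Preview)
Your lifting-criterion argument is correct. The paper instead invokes a chart theorem (Ogus IV.3.2.6) to produce, strict-\'etale locally on $X$, a log \'etale map $X \to \Aff^n$; it then pulls back the $\Aff^n$ case of Lemma~\ref{lem:logsmaffinebund} along the cartesian square of Lemma~\ref{lem:letpb}, obtaining strictness for free from that square. So the alternative you dismiss in your last paragraph is exactly the paper's proof, and the paper does not extend Lemma~\ref{lem:logsmaffinebund} beyond $\Aff^n$. Your route is more self-contained --- it needs neither the chart theorem nor the prior $\Aff^n$ computation --- but your strictness sentence needs tightening: it is not true that a log lift is determined by its underlying scheme lift (the unit part of the log structure on $S\times\ljet{r}{m+1}$ genuinely differs from that on $S\times\ljet{r}{m}$). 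What you actually want is that the torsor's structure group $\VV(\hat\alpha|_S^*\lkah{X})$ is strict over $S$, whence any torsor under it is also strict over $S$, being strict-\'etale locally isomorphic to the group.
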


\begin{proof}

Because $X$ is log smooth, one can strict-\'etale locally find a log \'etale map $X \to \Aff^n$ \cite[Theorem IV.3.2.6]{ogusloggeom}. This pullback square concludes the argument:
\[\begin{tikzcd}
\ljsp{r}{X, m+1} \ar[r] \ar[d] \lpbstrict        &\ljsp{r}{\Aff^n, m+1} \ar[d]      \\
\ljsp{r}{X, m} \ar[r]     &\ljsp{r}{\Aff^n, m}.
\end{tikzcd}\]    

\end{proof}

\begin{remark}

We suspect the truncation maps $\ljsp{r}{X, m+1} \to \ljsp{r}{X, m}$ are always strict. We have a sketch of a proof using Wise's description of the minimal log structure \cite{loghomstackwise}, but this construction does not expressly work for jet spaces because jets are not reduced. 

\end{remark}

Once we describe the case $\ev \Aff^1$ with $m = 0$, the higher order jets $\ljsp{r}{\Aff^1, m}$ are just affine bundles over it.

\begin{example}[The evaluation space $\ev \Aff^1$]\label{ex:evspA1}

This example is an observation of Jonathan Wise. See Figure \ref{fig:evspA1}. 

A map $T \to \ev \Aff^1$ amounts to $T \times P_{\NN} \to \Aff^1$, or a section of the log structure of $T \times P_{\NN}$. Because $P_{\NN}$ has log structure $M_{P_\NN} = k^* \oplus \NN$, the log structure on $T \times P_{\NN}$ is
\[M_{T \times P_{\NN}} = M_T \oplus \underline{\NN},\]
where $\underline{\NN}$ is the constant sheaf. The product $\Gamma(M_T \oplus \underline{\NN}) = \Gamma(M_T) \oplus \Gamma(\underline{\NN})$ means that such a section is a pair of 
\begin{itemize}
    \item A section $\Gamma(M_T)$ is representable by $\Aff^1$,
    \item A section $\Gamma(\underline{\NN})$ is representable by the infinite disjoint union $\bigsqcup_\NN \Spec k$. 
\end{itemize}

The space $\ev \Aff^1$ is then representable by the product
\[\Aff^1 \times \bigsqcup_\NN \Spec k = \bigsqcup_\NN \Aff^1.\]
The universal map from $\ev \Aff^1 \times P_{\NN} = \bigsqcup_\NN \Aff^1 \times P_{\NN}$ to $\Aff^1$ parameterizes the sum of the universal element of $\Gamma(M_{\Aff^1})$ and the contact order on each component. The map to the underlying scheme $(\Aff^1)^\circ$ is thus projection to the origin for all of the copies of $\Aff^1$ except for the one corresponding to $0 \in \NN$. 

For the standard log point $P_{\NN} \in \Aff^1$ at the origin, the evaluation space $\ev P_{\NN}$ is pulled back from $\ev \Aff^1$. The $c=0$ component is $P_{\NN}$ as always, but the others are all $\Aff^1$. 

\end{example}

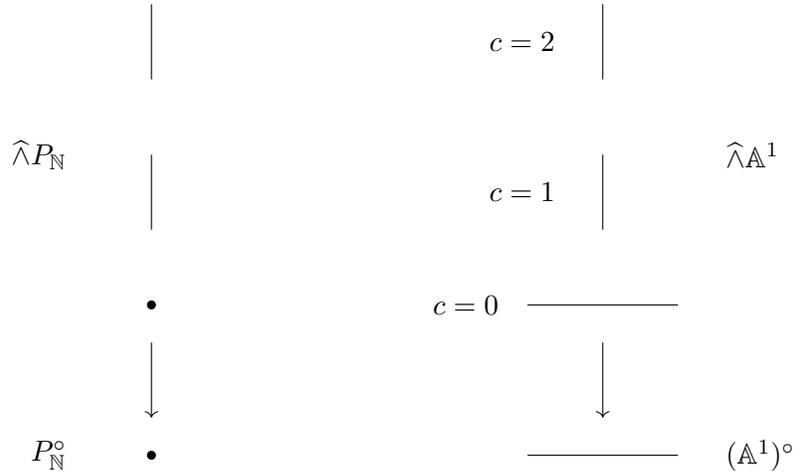
\begin{figure}
    \centering
    \begin{tikzpicture}
    \draw[fill] (0, -1) circle (1.5 pt);
    \draw[fill] (0, 1) circle (1.5 pt);
    \draw[-] (0, 2) to (0, 3);
    \draw[-] (0, 4) to (0, 5);
    \node[left] at (-1, -1){$P_\NN^\circ$};
    \node[left] at (-1, 3){$\ev P_{\NN}$};
    \draw[->] (0, .5) to (0, -.5);
    \begin{scope}[shift = {(6, 0)}]
    \draw[-] (-1, -1) to (1, -1);
    \draw[-] (-1, 1) to (1, 1);
    \node[left] at (-1.25, 1){$c=0$};
    \draw[-] (0, 2) to (0, 3);
    \node[left] at (-.5, 2.5){$c=1$};
    \draw[-] (0, 4) to (0, 5);
    \node[left] at (-.5, 4.5){$c=2$};
    \draw[->] (0, .5) to (0, -.5);
    \node[right] at (1.5, -1){$(\Aff^1)^\circ$};
    \node[right] at (1.5, 3){$\ev \Aff^1$};
    \end{scope}
    \end{tikzpicture}
    \caption{Right: The contact order $c=0$ component of $\ev \Aff^1$ maps identically to $\Aff^1$, while the other components $c = 1, 2$ map to the origin in $(\Aff^1)^\circ$. Restricting to the origin $P_\NN^\circ \in (\Aff^1)^\circ$ gives $\ev P_{\NN}$ on the left. The picture is similar for $\ev_Q \Aff^1$ with $Q$ not equal to $\NN$, but with more components indexed by all $c \in Q^+$. }
    \label{fig:evspA1}
\end{figure}

\begin{remark}\label{rmk:nonfsevspA1}

The evaluation space $\ev \Aff^1$ represents the functor 
\[T \mapsto \Hom(T \times P_{\NN}, \Aff^1)\]
on \textit{all} log schemes, not just saturated, integral, or even coherent ones. We doubt this is true for evaluation spaces $\ev X$ of other targets $X$ or the log jet spaces $\ljsp{r}{X, m}$. 

\end{remark}

\begin{corollary}

The evaluation space $\ev \Aff^n$ is a disjoint union of copies of $\Aff^n$ indexed  by $\NN^n$
\begin{equation}\label{eqn:evaffinenspace}
 \ev \Aff^n = \bigsqcup_{\NN^n} \Aff^n.   
\end{equation}
The log jet spaces are strict affine bundles over these, a disjoint union of affine spaces: 
\begin{equation}\label{eqn:logjetsAn}
\ljsp{r}{\Aff^n, m} = \bigsqcup_{\NN^n} \left(\Aff^{nm}\right)^\circ \times \Aff^n.    
\end{equation}

\end{corollary}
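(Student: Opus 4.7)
The plan is to reduce to the case $n=1$ by product-compatibility, dispatch the evaluation space part with Example \ref{ex:evspA1}, and then compute $\ljsp{r}{\Aff^1, m}$ directly from its functor of points.

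Because $\ljsp{r}{X, m}(T) = \Hom(T \times \ljet{r}{m}, X)$, the construction $X \mapsto \ljsp{r}{X, m}$ preserves finite products in $X$. Writing $\Aff^n$ as the $n$-fold fs product of $\Aff^1$'s,
\[
\ev \Aff^n = (\ev \Aff^1)^n, \qquad \ljsp{r}{\Aff^n, m} = \ljsp{r}{\Aff^1, m}^n.
\]
Combined with Example \ref{ex:evspA1}, this immediately yields (\ref{eqn:evaffinenspace}) and reduces (\ref{eqn:logjetsAn}) to the case $n = 1$.

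For $n=1$, I would compute $\ljsp{r}{\Aff^1, m}(T) = \Gamma(M_{T \times \ljet{r}{m}})$ directly. The characteristic monoid $\bar M_{T \times \ljet{r}{m}} = \bar M_T \oplus \underline{\NN}$ admits a projection to $\underline{\NN}$, producing a locally constant contact order $c \in \NN$ which yields the disjoint union indexed by $\NN$. Over a fixed $c$, a section of the log structure is determined by a section of $M_T$ (packaging the $\bar M_T$ coordinate with the constant term of the unit ambiguity in $\OO_T^*$) together with an element of the subgroup $1 + t\OO_T\adj{t}_m \subseteq \OO_{T \times \ljet{r}{m}}^*$, via the canonical splitting $\OO_T\adj{t}_m^* \cong \OO_T^* \times (1 + t\OO_T\adj{t}_m)$ obtained by evaluation at $t = 0$. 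Since $1 + t\OO_T\adj{t}_m$ is represented as a $T$-scheme by $(\Aff^m)^\circ$ through the coefficients of $t, t^2, \ldots, t^m$, and a section of $M_T$ is a $T$-point of $\Aff^1$, the functor is represented by $\bigsqcup_\NN \Aff^1 \times (\Aff^m)^\circ$. Taking the $n$-fold product delivers (\ref{eqn:logjetsAn}).

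The main subtlety lies in the interplay between the log structure and the underlying scheme of $T \times \ljet{r}{m}$: one must verify that the decomposition $\OO_T\adj{t}_m^* \cong \OO_T^* \times (1 + t\OO_T\adj{t}_m)$ and the recombination of the $\bar M_T$ data with $\OO_T^*$ into $M_T$ are natural in $T$, so that the bijection of sets extends to an isomorphism of functors. This is a routine but careful unwinding of the fs pushout expressing $M_{T \times \ljet{r}{m}}$ in terms of $M_T$ and the chart $1 \mapsto t^r$ on $\ljet{r}{m}$; no ingredient beyond Example \ref{ex:evspA1} and the structure of units in truncated power-series rings is required.
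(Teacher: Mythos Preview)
Your argument is correct and takes a genuinely different route from the paper. Both proofs reduce to $n=1$ via product-compatibility and invoke Example \ref{ex:evspA1} for the evaluation space. For the trivialization of $\ljsp{r}{\Aff^1, m}$, however, the paper argues by induction on $m$: the truncation map $\ljsp{r}{\Aff^1, m+1} \to \ljsp{r}{\Aff^1, m}$ is an $(\Aff^1)^\circ$-bundle by Lemma \ref{lem:logsmaffinebund}, hence classified by an $\Aut(\Aff^1) = \GG_m \ltimes \Aff^1$ torsor, and such torsors are trivial on each component $(\Aff^m)^\circ \times \Aff^1$ because $H^1$ of $\GG_m$ and $\GG_a$ vanish there. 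You instead compute the functor of points directly, using the natural splittings $M_{T \times \ljet{r}{m}} \cong \pi^* M_T \times \underline{\NN}$ and $\OO_T\adj{t}_m^* \cong \OO_T^* \times (1 + t\OO_T\adj{t}_m)$ to exhibit explicit coordinates.

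Your approach is more elementary (no cohomology, no induction) and produces a \emph{canonical} trivialization---precisely what the paper's subsequent Remark on ``log Hasse--Schmidt differentials'' wishes for but does not carry out. The paper's approach, on the other hand, fits its narrative: it establishes Lemma \ref{lem:logsmaffinebund} for later use and emphasizes that the trivialization is the nontrivial content, which is why the proof must pass through $\Aff^1$ rather than $\Aff^n$ (where $\Aut(\Aff^n)$ is wild). Your direct computation sidesteps this issue entirely.
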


\begin{proof}

The equation \eqref{eqn:evaffinenspace} results from Example \ref{ex:evspA1} and compatibility of $\ev-$ with limits. 

The only surprise is that the affine bundles can be trivialized. Since the log jet space preserves fs fiber products $\ljsp{r}{\Aff^n, m} = (\ljsp{r}{\Aff^1, m})^n$, it suffices to trivialize the case of $\Aff^1$:
\[\ljsp{r}{\Aff^1, m} \overset{?}{=} \bigsqcup_\NN \left(\Aff^m \right)^\circ \times \Aff^1.\]
Induct on $m$. The order $m+1$ jets are an $(\Aff^1)^\circ$-bundle over $\ljsp{r}{\Aff^1, m} = \bigsqcup_\NN \left(\Aff^m\right)^\circ \times \Aff^1$. Such $(\Aff^1)^\circ$-bundles come from $\Aut(\Aff^1)$-torsors. Automorphisms of $(\Aff^1)^\circ$ are affine transformations: 
\[\Aut(\Aff^1) = \GG_m \ltimes \Aff^1.\]
But both $\GG_m$ and $\Aff^1$ torsors are trivial on each component $\left(\Aff^m\right)^\circ \times \Aff^1$. 

\end{proof}

\begin{remark}

We reduce to $X = \Aff^1$ in the proof. This proof would not work directly for the log jet space of $\Aff^n$, because $\Aff^n$-bundles can be much more twisted. The automorphism group of $\Aff^n$ is much larger than the affine transformations. Even $\Aut(\Aff^2)$ includes the map:
\[x \mapsto x, \qquad y \mapsto y + x^3.\]

\end{remark}

\begin{remark}

Ordinary Hasse-Schmidt differentials give coordinate functions on the jet spaces of $\Aff^n$ \cite[Corollary 5.2]{vojtajetspaces}. The coordinates on the components \eqref{eqn:logjetsAn} may be considered as ``log Hasse-Schmidt differentials.'' One may be able to describe a canonical trivialization using such higher log differentials directly. 

\end{remark}

\subsection{Representability}

We are ready to prove representability of the log jet space.

\begin{theorem}\label{thm:repability}

Let $X$ be an fs log scheme. The log jet spaces $\ljsp{r}{X, m}$ are representable by fs log algebraic spaces. If $X$ admits charts Zariski locally, $\ljsp{r}{X, m}$ is representable by an fs log scheme. 

\end{theorem}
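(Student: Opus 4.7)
The plan is to handle the finite-$m$ cases by descent and induction, then obtain $m = \infty$ via Proposition \ref{prop:bhattthm}. The base case $m = 0$ identifies $\ev X = \ljsp{r}{X,0}$ with the log hom functor $\HHom(P_{\NN}, X)$, which is representable by an fs log algebraic space (by a log scheme when $X$ admits Zariski charts) thanks to Wise's general representability theorem \cite{loghomstackwise}.

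For $m > 0$, I would first verify strict étale descent for the presheaf $\ljsp{r}{-, m}$: given a strict étale cover $\{X_i \to X\}$ and a $T$-point $T \times \ljet{r}{m} \to X$, pulling back yields strict étale maps $U_i \to T \times \ljet{r}{m}$. Since $\jet{m}$ is a (formal, for $m=\infty$) nilpotent thickening of a point, topological invariance of the small étale site presents each $U_i$ as $T_i \times \ljet{r}{m}$ for a strict étale cover $T_i \to T$. Hence it suffices to establish representability locally, so I may assume $X$ admits a global strict chart $X \to \Aff_P$ with $P$ an fs monoid. Strictness of this chart gives a cartesian diagram in fs log schemes $X = X^\circ \times^\ell_{\Aff_P^\circ} \Aff_P$, and then Remark \ref{rmk:logjetspacepb} yields
\[\ljsp{r}{X, m} = \jsp{X^\circ, m} \times^\ell_{\jsp{\Aff_P^\circ, m}} \ljsp{r}{\Aff_P, m},\]
where the first two factors are ordinary jet spaces, representable by algebraic spaces (by schemes when $X^\circ$ is a scheme).

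It then remains to represent $\ljsp{r}{\Aff_P, m}$. Using the base case, $\ev \Aff_P$ is representable, and truncation to the $0$-jet yields a canonical map $\ljsp{r}{\Aff_P, m} \to \ev \Aff_P$, along which $\ljsp{r}{\Aff_P, m}$ decomposes over contact orders $c \in \Hom(P, \NN)$ just as in Example \ref{ex:evspA1}. Since $\Aff_P$ is log smooth, the corollary to Lemma \ref{lem:logsmaffinebund} shows each truncation $\ljsp{r}{\Aff_P, m+1} \to \ljsp{r}{\Aff_P, m}$ is a strict affine bundle, so induction on $m$ gives representability. Gluing via strict étale (or Zariski) descent produces representability of $\ljsp{r}{X, m}$ for all finite $m$. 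For $m = \infty$, Proposition \ref{prop:bhattthm} exhibits $\ljsp{r}{X, \infty} = \lim_m \ljsp{r}{X, m}$ as a limit along affine truncation maps, hence representable by a formal fs log algebraic space (resp.~log scheme).

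The main obstacle is the careful bookkeeping of log structures through the descent and limit arguments: one must confirm that the log structure on the local pieces $\ljsp{r}{\Aff_P, m}$ pulled back via the strict chart agrees with the intrinsic log structure on $\ljsp{r}{X, m}$, and that strict étale covers of $T \times \ljet{r}{m}$ really descend as strict étale covers of $T$ compatibly across all truncations. A secondary subtlety is that Zariski-local (rather than merely strict étale-local) charts on $X$ are precisely what is needed to conclude representability by log schemes rather than only by log algebraic spaces, and one should verify that Zariski descent transfers through the cartesian formula for $\ljsp{r}{X, m}$ above.
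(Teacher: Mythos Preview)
Your argument is correct and follows precisely the alternative route the paper mentions but deliberately sets aside: invoke Wise's theorem \cite{loghomstackwise} for $m=0$, then induct on $m$ using the affine-bundle structure of truncation maps over log smooth toric pieces, and finish $m=\infty$ via Proposition~\ref{prop:bhattthm}. The paper instead follows \cite{evaluationspacefamiliesofstandardlogpoints}: after \'etale-localizing to a global chart $Q$, it presents $Q$ as a coequalizer $\NN^a \rightrightarrows \NN^b \to Q$, endows the same underlying scheme with the free charts $\NN^a$, $\NN^b$ to obtain $X_a$, $X_b$ admitting strict maps to affine spaces, and realizes $\ljsp{r}{X,m}$ as the equalizer of $\ljsp{r}{X_b,m} \rightrightarrows \ljsp{r}{X_a,m}$. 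This handles all $m$ uniformly (including $\infty$) and is self-contained, invoking neither Wise's theorem nor Bhatt's.

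Two minor points on your version. First, your appeal to Bhatt's theorem for $m=\infty$ is not circular in the paper's logic, but to pass from the functorial limit to a representing log algebraic space you still need the truncation maps to be affine for general $X$; this does follow from your cartesian formula over $\jsp{X^\circ,m}$ and $\ljsp{r}{\Aff_P,m}$, but should be said. Second, the word ``formal'' in your final sentence is off: the inverse limit along affine morphisms of (log) algebraic spaces is an honest (log) algebraic space, not a formal one.
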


One cannot use \cite{loghomstackwise} for $m \neq 0$, because his log hom stacks $\HHom_S(X, Y)$ require $X \to S$ geometrically reduced. One could localize, apply \cite{loghomstackwise} to $m=0$ and check representability of the truncation maps separately. We instead follow the proof of \cite[Theorem 1.1.1]{evaluationspacefamiliesofstandardlogpoints}. We are not sure if $\ljsp{r}{X, m}$ is always representable by a log scheme.

\begin{proof}

We dealt with $X = \Aff^1, \Aff^n$ in the last section. If $X$ admits a strict map to $\Aff^n$, the pullback square
\[\begin{tikzcd}
\ljsp{r}{X, m} \ar[r] \ar[d] \lpbstrict         &\ljsp{r}{\Aff^n, m} \ar[d]      \\
\jsp{X, m} \ar[r]         &\jsp{\Aff^n, m}
\end{tikzcd}\]
shows representability. 

Strict \'etale maps $U \to X$ lead to pullback squares
\[\begin{tikzcd}
\ljsp{r}{U, m} \ar[r] \ar[d] \lpbstrict         &\ljsp{r}{X, m} \ar[d]        \\
U^\circ \ar[r]         &X^\circ.
\end{tikzcd}\]
To check the map $\ljsp{r}{X, m} \to X^\circ$ is representable by algebraic spaces, we can localize in $X$ to assume $X = \Spec A$ admits a global chart $Q \to A$. 

Write $Q$ as a coequalizer
\[\NN^a \rightrightarrows \NN^b \to Q\]
and write $X_a, X_b$ for the log schemes with log structure from the charts $\NN^a, \NN^b$. These admit strict maps to affine spaces, so we have representability of $\ljsp{r}{X_a, m}, \ljsp{r}{X_b, m}$. Obtain a representative of $\ljsp{r}{X, m}$ as the equalizer
\begin{equation}\label{eqn:eqrepability}
\ljsp{r}{X, m} \to \ljsp{r}{X_b, m} \rightrightarrows \ljsp{r}{X_a, m}
\end{equation}
among fs log schemes as in \cite[Proof of Theorem 1.1.1]{evaluationspacefamiliesofstandardlogpoints}.

If $X$ has charts Zariski locally, one can Zariski localize to apply the above argument. The functor $\ljsp{r}{X, m}$ then has a Zariski cover by log schemes and is itself a log scheme.

\end{proof}

\begin{remark}\label{rmk:nonfsrepability}

Define $\ljspnonfs{r}{X, m}$ to represent the functor
\[T \mapsto \Hom(T \times \ljet{r}{m}, X)\]
on \textit{all} log schemes $T$, not necessarily saturated, integral, or coherent. Remark \ref{rmk:nonfsevspA1} checks this coincides with the usual one $\ljspnonfs{r}{\Aff^n, m} = \ljsp{r}{\Aff^n, m}$ for $X = \Aff^n$. 

If $X$ is coherent, the proof of Theorem \ref{thm:repability} applies to show $\ljspnonfs{r}{X, m}$ is representable by a non fs log scheme. The equalizer \eqref{eqn:eqrepability} is computed in all log schemes and not fs ones, so it may differ. The map $\ljsp{r}{X, m} \to \ljspnonfs{r}{X, m}$ is fs-ification. 

\end{remark}

\subsection{More on the evaluation space $\ev X$}\label{ss:moreevsp}

All the components of $\ev X$ except the contact order zero one $X \subseteq \ev X$ map to the closed locus on $X^\circ$ where the log structure is supported:

\begin{remark}

Let $U \subseteq X$ be the possibly empty open locus where the log structure of $X$ is trivial. The preimage of $U$ under $\ev X \to X^\circ$ must be $\ev U = U$ because the log structure is trivial. All the components of $\ev X$ corresponding to nonzero contact orders lie over the closed complement $X \setminus U$. 

\end{remark}

\begin{example}\label{ex:evspA2}

See Figure \ref{ex:evspA2} for the components of $\ev \Aff^2 = \bigsqcup_{\NN^2} \Aff^2$. Write $\NN^+ = \NN \setminus 0$ for the monoid ideal of nonunits. We describe the map $\Aff^2 \to (\Aff^2)^\circ$ corresponding to each of the components indexed by $\NN \times \NN$:
\begin{itemize}
    \item[$(0, 0)$:] The identity $\Aff^2 \to (\Aff^2)^\circ$ on underlying schemes.
    \item[$\NN^+ \times 0$:] The projection $\Aff^2 \to \Aff^1$ composed with the embedding $\Aff^1 \to (\Aff^2)^\circ$ of the $y$-axis. 
    \item[$0 \times \NN^+$:] The projection $\Aff^2 \to \Aff^1 \to (\Aff^2)^\circ$ onto the $x$-axis.
    \item[$\NN^+ \times \NN^+$:] The projection to the origin $\Aff^2 \to 0 \in (\Aff^2)^\circ$.
\end{itemize}

To compute $P_{\NN^2}$, embed it strictly as the origin of $\Aff^2$. The evaluation space $\ev P_{\NN^2}$ is the fiber of $\ev \Aff^2 \to (\Aff^2)^\circ$ over the origin:
\begin{multicols}{2}
\begin{itemize}
    \item[$(0, 0)$:] $P_{\NN}$
    \item[$\NN^+ \times 0$:] $\Aff^1$
    \item[$0 \times \NN^+$:] $\Aff^1$
    \item[$\NN^+ \times \NN^+$:] $\Aff^2$. 
\end{itemize}
\end{multicols}

If $X \subseteq \Aff^2$ is the union of the two axes, compute the evaluation space $\ev X$ also as the pullback of $\ev \Aff^2$ along $X^\circ \subseteq (\Aff^2)^\circ$. 

\end{example}

\begin{figure}
    \centering
    \begin{tikzpicture}
    \begin{scope}[shift={(-8, 0)}]
        \node[left] at (0, 4){$\ev \Aff^2$};
        \begin{scope}[shift={(0, 0)}]
        \filldraw[-, violet] (0, 0) -- (1, 0) -- (1, 1) -- (0, 1) -- (0, 0);
        \node[white] at (.5, .5){$(0, 0)$};
        \end{scope}
        \begin{scope}[shift={(2, 0)}]
        \filldraw[-, pattern color=blue, pattern=north east lines] (0, 0) -- (1, 0) -- (1, 1) -- (0, 1) -- (0, 0);
        \node at (.5, .5){$(1, 0)$};
        \end{scope}
        \begin{scope}[shift={(4, 0)}]
        \filldraw[-, pattern color=blue, pattern=north east lines]  (0, 0) -- (1, 0) -- (1, 1) -- (0, 1) -- (0, 0);
        \node at (.5, .5){$(2, 0)$};
        \end{scope}
        \begin{scope}[shift={(0, 2)}]
        \filldraw[-, pattern color=blue, pattern=north west lines]   (0, 0) -- (1, 0) -- (1, 1) -- (0, 1) -- (0, 0);
        \node at (.5, .5){$(0, 1)$};
        \end{scope}
        \begin{scope}[shift={(2, 2)}]
        \draw[-] (0, 0) -- (1, 0) -- (1, 1) -- (0, 1) -- (0, 0);
        \node at (.5, .5){$(1, 1)$};
        \end{scope}
        \begin{scope}[shift={(4, 2)}]
        \draw[-] (0, 0) -- (1, 0) -- (1, 1) -- (0, 1) -- (0, 0);
        \node at (.5, .5){$(2, 1)$};
        \end{scope}
        \begin{scope}[shift={(0, 4)}]
        \filldraw[-, pattern color=blue, pattern=north west lines]  (0, 0) -- (1, 0) -- (1, 1) -- (0, 1) -- (0, 0);
        \node at (.5, .5){$(0, 2)$};
        \end{scope}
        \begin{scope}[shift={(2, 4)}]
        \draw[-] (0, 0) -- (1, 0) -- (1, 1) -- (0, 1) -- (0, 0);
        \node at (.5, .5){$(1, 2)$};
        \end{scope}
        \begin{scope}[shift={(4, 4)}]
        \draw[-] (0, 0) -- (1, 0) -- (1, 1) -- (0, 1) -- (0, 0);
        \node at (.5, .5){$(2, 2)$};
        \end{scope}
    \end{scope}
    \begin{scope}[scale=.8]
        \begin{scope}[shift={(3, 3)}]
        \filldraw[-, violet] (-1, .5) -- (0, 0) -- (2, 0) -- (1, .5) -- (-1, .5);
        \draw[->] (.5, -.25) to (.5, -1.25);
        \draw[-] (-1, -1.5) -- (0, -2) -- (2, -2) -- (1, -1.5) -- (-1, -1.5);
        \end{scope}
        \begin{scope}[shift={(6, 3)}]
        \filldraw[-, pattern color=blue, pattern=north east lines] (-.5, 1) -- (-.5, 0) -- (1.5, 0) -- (1.5, 1) -- (-.5, 1);
        \draw[->] (.5, -.25) to (.5, -1.25);
        \draw[-] (-1, -1.5) -- (0, -2) -- (2, -2) -- (1, -1.5) -- (-1, -1.5);
        \draw[-] (-.5, -1.75) to (1.5, -1.75);
        \end{scope}
        \begin{scope}[shift={(0, 3)}]
        \filldraw[-, pattern color=blue, pattern=north east lines] (0, 1.5) -- (0, .5) -- (1, 0) -- (1, 1) -- (0, 1.5);
        \draw[->] (.5, -.25) to (.5, -1.25);
        \draw[-] (-1, -1.5) -- (0, -2) -- (2, -2) -- (1, -1.5) -- (-1, -1.5);
        \draw[-] (0, -1.5) to (1, -2);
        \end{scope}
    \end{scope}
    \end{tikzpicture}
    \caption{Left: The evaluation space $\ev \Aff^2$ consists of components $\Aff^2$ indexed by $\NN^2$. Right: The maps to the underlying scheme $(\Aff^2)^\circ$. 
    The purple component indexed by $c = 0$ maps identically to $(\Aff^2)^\circ$. The shaded blue components indexed by $\NN^+ \times 0$ and $0 \times \NN^+$ project onto the $y$- and $x$-axes, respectively. The blank white components indexed by $\NN^+ \times \NN^+$ map to the origin $0 \in (\Aff^2)^\circ$. }
    \label{fig:evspA2}
\end{figure}
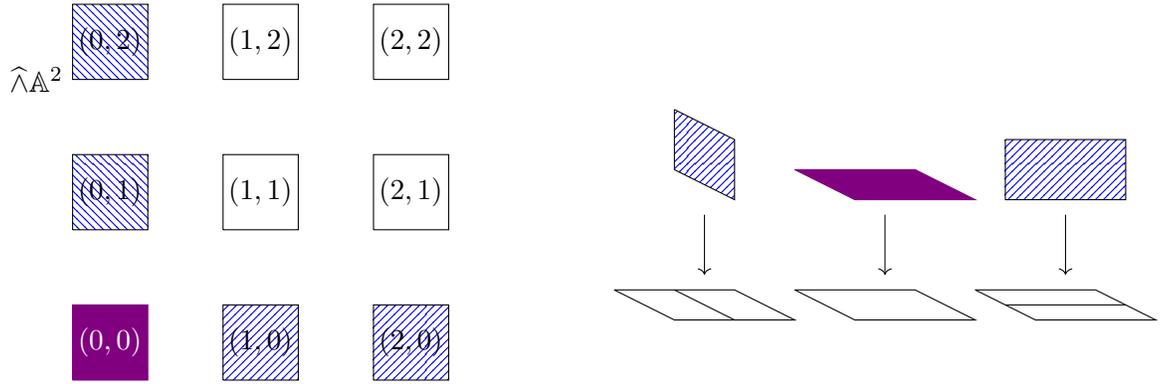

\begin{example}\label{ex:evspaffinetoricvar}

Let $R$ be another sharp fs monoid, yielding an affine toric variety
\[\Aff_R = \Spec k[R].\]
Our evaluation space $\ev \Aff_R$ has $T$-points given by maps of monoids
\[R \to \Gamma(M_T) \oplus \Gamma(\underline{\NN}).\]
This case runs parallel to $R = \NN$ and $\Aff_\NN = \Aff^1$. The map of monoids is a pair of 
\begin{itemize}
    \item a map $R \to \NN$ for each connected component of $T$, represented by $\bigsqcup_{\Hom(R, \NN)} \Spec k$, and
    \item a map $R \to \Gamma(M_T)$, represented by $\Aff_R$. 
\end{itemize}
Then $\ev \Aff_R = \bigsqcup_{\Hom(R, \NN)} \Aff_R$. This is why the components of $\ev X$ often look like the Artin fan of $X$, for example when $X$ is a toric variety. 

\end{example}

The above examples all had Zariski local charts, so they were represented by log schemes. Here's an example of what happens for log structures in the \'etale topology, where Artin fans can have monodromy.

\begin{example}

Let $D \subseteq \Aff^2$ be the nodal cubic curve
\[D : y^2 = x(x-1)^2.\]
Endow $X$ and $D$ with the divisorial log structure. Write $0 \in D \subseteq \Aff^2$ for the origin and $U = D \setminus 0$ for the complement, with underlying scheme isomorphic to $\GG_m$. The log structure $\bar M_D$ has stalks $\NN$ on $U = D \setminus 0$ and $\NN^2$ at the origin $0$, so $0 = P_{\NN^2}$. 

We compute $\ev D$. See Figure \ref{fig:evspnodalcubic}. Theorem \ref{thm:repability} only shows that $\ev D$ is representable by a log algebraic space because $D$ lacks Zariski local charts. In fact, $\ev D$ is a log scheme. 

Because $\ev D$ restricts to the evaluation spaces along the strict inclusions $U \subseteq D, 0 \in D$, the fiber over $0^\circ \in D^\circ$ is $\ev P_{\NN^2}$. Recall $\ev P_{\NN^2}$ has components $\Aff^2$ indexed by $\NN^+ \times \NN^+$, $\Aff^1$ for each of the lines $0 \times \NN^+$ and $\NN^+ \times 0$, and $P_{\NN^2}$ itself for $0 \times 0$. Likewise $\ev D \times_{D^\circ} U^\circ$ is $\ev U$. The strict map $U \to P_{\NN}$ exhibits $\ev U$ as the product of $\ev P_{\NN}$ with $U^\circ$, which has one component $U$ and the rest $U^\circ \times \Aff^1$. 

The space $\ev D$ is glued together from these spaces $\ev 0$ and $\ev U$. To check how they are glued together, one can work on the \'etale cover of $D$ by two $\Aff^1$'s joined at two nodes or simply a formal neighborhood of the origin $0$. In either case, the local picture coincides with the union of the axes in $\Aff^2$ mentioned in Example \ref{ex:evspA2}. 

The components are labeled by $0, 0 \times \NN^+$, and $\NN^+ \times \NN^+$: 
\begin{itemize}
    \item The component corresponding to contact order $0$ is $D$, as usual.
    \item Those corresponding to $\NN^+ \times \NN^+$ are all $\Aff^2$.
    \item The ones indexed by $0 \times \NN^+$ are $\Aff^2$. They factor through the normalization $\Aff^1 \to D$. 
\end{itemize}

\end{example}

\begin{figure}
    \centering
    \begin{tikzpicture}
    \begin{scope}[shift={(0, 3.5)}]
    \node[left] at (-1, 0){$E$};
	\draw[variable = \x, domain=-1:1.3] plot (\x*\x-1, .5*\x*\x*\x-.5*\x); 
	\draw[-] (-1, 0) to (-1, -.5);
	\draw[-] (.65, -.1) to (.65, .4);
    \end{scope}
    \begin{scope}[shift={(0, 3)}]
	\draw[variable = \x, domain=0:1.3] plot (\x*\x-1, .5*\x*\x*\x-.5*\x); 
	\draw[-] (.32, .18) to (.65, -.15) to (.65, -.65) to (0, 0);
    \end{scope}
    \begin{scope}[shift={(0, 1)}]
	\draw[variable = \x, domain=-1.3:1.3, blue] plot (\x*\x-1, .5*\x*\x*\x-.5*\x);
	\draw[->] (0, 1.5) to (0, .5);
	\node[left] at (-1, 0){$D$};
    \end{scope}
    \end{tikzpicture}
    \caption{The space $\ev D$ has a component $D$ for $c = 0$, components $\Aff^2$ mapping to the node $0 \in D$ for all $c \in \NN^+ \times \NN^+$, and then components like the above $E$ for $c \in \NN^+ \times 0$. The component $E = \Aff^2$ factors through the normalization $\Aff^1 \to D$ of the nodal curve. }
    \label{fig:evspnodalcubic}
\end{figure}
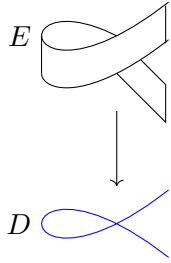

\begin{example}\label{ex:evsprootstack}

Let $f : \Aff^1 \to \Aff^1$ be the multiplication by $d$ map, for some integer $d > 0$. The induced map $\ev \Aff^1 \to \ev \Aff^1$ sends the a component corresponding to $q \in \NN$ to the component $dq \in \NN$. The map between the components $\Aff^1 \to \Aff^1$ is also multiplication by $d$. We see that $\ev -$ does not preserve surjectivity. 

\end{example}

\begin{example}\label{ex:evspblowup}

Let $B \to \Aff^2$ be the log blowup at the origin. Let $U, V \subseteq B$ be the charts containing $x' = x/y$ and $y' = y/x$. Both are isomorphic to $\Aff^2$, with evaluation spaces $\ev U, \ev V$ depicted in Figure \ref{fig:evspA2}. The maps $\ev U, \ev V \to \ev \Aff^2$ induced from the blowup are far from the identity. The component of $\ev U$ indexed by $(a, b)$ maps to the one indexed by $(a + b, b)$ in $\ev \Aff^2$. The map $\Aff^2 \to \Aff^2$ from the $(a, b)$-components with $a > b$ is the same as $U \to \Aff^2$, a chart for the blowup. The case of $V$ is the reverse. They glue together along the diagonal $(a, a)$ for $a \neq 0$. 

Write $\Delta \subseteq \NN^2$ for the positive diagonal $\Delta = \{(a, a) | a \in \NN^+\}$. Piece $\ev U, \ev V$ together along $\ev U \cap V$ to see that $\ev B$ with its map to $\ev \Aff^2$ consists of:
\begin{itemize}
    \item a contact order-zero component $B$,
    \item components $\Aff^2$ projecting onto the strict transforms of the $x$- and $y$-axes in $B$ indexed by $\NN^+ \times 0$ and $0 \times \NN^+$, 
    \item the majority of the components $\NN^+ \times \NN^+ \setminus \Delta$ excepting the diagonal of $\Aff^2$'s mapping to the corresponding $\Aff^2 \subseteq \ev \Aff^2$ as either the $U$ or $V$ chart for the blowup, 
    \item the diagonal $\Delta \subseteq \NN^2$ consisting of $B$ mapping to $\Aff^2$ as the blowup of the origin. 
\end{itemize}

Write $P_{\NN^2} \in \Aff^2$ for the origin and $E \subseteq B$ for the exceptional divisor. A similar picture arises for $\ev E \to \ev P_{\NN^2}$. There are components indexed by $\NN^2$, with the axes $\NN^+ \times 0$ and $0 \times \NN^+$ given by $\Aff^1$'s. The origin $(0, 0)$ is $E$ as usual. The components along the diagonal $\Delta \subseteq \NN^2$ are $B$, mapping to the corresponding $\Aff^2$'s of $\ev P_{\NN^2}$ as the log blowup of the origin. The rest of the components $\NN^+ \times \NN^+ \setminus \Delta$ are $\Aff^2$'s, mapping to the $\Aff^2$'s of $\ev P_{\NN^2}$ as charts of blowups.

\end{example}

\begin{remark}\label{rmk:evspnotsurj}

Even if $X \to Y$ is proper or surjective, the map on evaluation spaces $\ev X \to \ev Y$ need not be. Examples \ref{ex:evsprootstack} and \ref{ex:evspblowup} both depict surjective maps that lead to non-surjective maps after taking $\ev -$. The multiplication by $d$ map even misses most of the components of the target. For the blowup $B \to \Aff^2$, restrict to the exceptional divisor $E \to P_{\NN^2}$. Take the identity $P_{\NN^2} \longequals P_{\NN^2}$, thought of as a point $P_{\NN} \to \ev P_{\NN^2}$. This does not factor through the blowup.

The latter Example \ref{ex:evspblowup} takes the proper log blowup of the origin $B \to \Aff^2$ and produces components of $\ev B$ that map to $\ev \Aff^2$ as charts of the blowup $B \to \Aff^2$, not proper.

\end{remark}

Log modifications $X \to Y$ do not lead to log modifications $\ev X \to \ev Y$ by Remark \ref{rmk:evspnotsurj}. Less is true. 

\begin{lemma}\label{lem:evspmaplogetmonom}

If $X \to Y$ is log \'etale or a monomorphism, so are $\ev X \to \ev Y$ and $\ljsp{r}{X, m} \to \ljsp{r}{Y, m}$. 

\end{lemma}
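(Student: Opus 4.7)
The plan is to reduce everything to the functor of points description $\ljsp{r}{X,m}(T) = \Hom(T \times \ljet{r}{m}, X)$ and exploit that all the relevant properties (monomorphism, formally log \'etale) can be tested against test log schemes.

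For monomorphisms, I would simply apply $\Hom(T \times \ljet{r}{m}, -)$ to $f \colon X \to Y$. If $f$ is a monomorphism in fs log schemes, then $\Hom(S, X) \to \Hom(S, Y)$ is injective for every fs log scheme $S$; specializing to $S = T \times \ljet{r}{m}$ yields injectivity of $\ljsp{r}{X,m}(T) \to \ljsp{r}{Y,m}(T)$ for all $T$, and hence a monomorphism on log jet spaces. The case of $\ev X \to \ev Y$ is $m = 0$.

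For log \'etaleness, I would verify the unique right lifting property against strict square-zero extensions $T \hookrightarrow T'$ of fs log schemes. The central claim is that $T \times \ljet{r}{m} \hookrightarrow T' \times \ljet{r}{m}$ is again a strict square-zero extension. Strictness is preserved under fs base change. For the square-zero property on underlying schemes, the footnote in the excerpt notes that since $\ljet{r}{m} \to \pt$ is integral and saturated, the fs and schematic fiber products agree, so the underlying map is $T^\circ \times \jet{m} \hookrightarrow T'^\circ \times \jet{m}$; this is a square-zero extension with ideal $I \otimes_k k\adj{t}_m$, where $I$ is the ideal of $T^\circ \hookrightarrow T'^\circ$. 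Granting this, a compatible pair of a $T$-point of $\ljsp{r}{X,m}$ and a $T'$-point of $\ljsp{r}{Y,m}$ is exactly a solid lifting square
\[
\begin{tikzcd}
T \times \ljet{r}{m} \ar[r] \ar[d, hook] & X \ar[d] \\
T' \times \ljet{r}{m} \ar[r] \ar[ur, dashed] & Y,
\end{tikzcd}
\]
and formal log \'etaleness of $X \to Y$ gives a unique dashed filler, so $\ljsp{r}{X,m} \to \ljsp{r}{Y,m}$ is formally log \'etale.

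For finite $m$, local finite presentation follows from Theorem~\ref{thm:repability} together with the fact that \'etale-locally $X \to Y$ is obtained by equalizing finitely presented charts, an operation the jet space functor preserves; combining this with formal log \'etaleness yields log \'etaleness. For $m = \infty$ one reads off the statement at the level of formal log \'etaleness (or passes to the limit via Proposition \ref{prop:bhattthm}). The only genuine point to be careful about is the verification that base change by $\ljet{r}{m}$ in fs log schemes preserves strict square-zero extensions, which is where the integral-saturated hypothesis on $\ljet{r}{m} \to \pt$ is used; beyond that, the argument is a purely formal consequence of the functorial description of $\ljsp{r}{X,m}$.
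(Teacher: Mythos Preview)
The paper's own proof is simply ``Omitted,'' so there is nothing concrete to compare against; your argument via the functor of points (for monomorphisms) and the infinitesimal lifting criterion against strict square-zero thickenings (for formal log \'etaleness) is correct and is exactly the routine verification the author presumably had in mind. The one place where your write-up is slightly informal is local finite presentation and the case $m=\infty$: the cleanest way to close both is to note that Lemma~\ref{lem:letpb} already gives $\ljsp{r}{X,m} \simeq \ljsp{r}{Y,m} \times^\ell_{\ev Y} \ev X$ for log \'etale $X\to Y$ (including $m=\infty$ via Proposition~\ref{prop:bhattthm}), so it suffices to establish log \'etaleness for $\ev X \to \ev Y$, where finite presentation can be read off from the explicit local description in the proof of Theorem~\ref{thm:repability}.
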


\begin{proof}

Omitted.

\end{proof}

\begin{example}[Toric singularity]

Consider the cone $Q$ generated by $(1, 1), (1, 0), (1, -1)$ in $\ZZ^2$ and associated affine toric variety
\[\Aff_Q \coloneqq \Spec k[x, y, z]/(xy=z^2)\]
with natural log structure from the toric divisors. This toric variety is log smooth, so its log jet space $\ljsp{r}{\Aff_Q, m}$ is a log smooth log scheme. 

A morphism $f : \Aff_Q \to \Aff_P$ of affine toric varieties associated to a morphism $P \to Q$ of fs sharp monoids is log smooth if and only if the kernel and the torsion subgroup of the cokernel of $\gp P \to \gp Q$ are finite groups of orders invertible in $k$ \cite[Theorem 3.5]{katooriginal}. The map $f$ is log \'etale if the kernel and full cokernel of $\gp P \to \gp Q$ satisfy that condition. 

The map $\NN^2 \to Q$ sending the generators to $(1, 1)$ and $(1, -1)$ thus induces a log \'etale morphism of toric varieties $\Aff_Q \to \Aff^2$. The log jet spaces are thus pulled back
\[\begin{tikzcd}
\ljsp{r}{\Aff_Q, m} \ar[r] \ar[d] \lpbstrict         &\ljsp{r}{\Aff^2, m} \ar[d]        \\
\ev \Aff_Q \ar[r]          &\ev \Aff^2.
\end{tikzcd}\]

Remark that $\Aff_Q$ is the pullback
\[\begin{tikzcd}
\Aff_Q \ar[r] \ar[d] \lpbstrict      &\Aff^1 \ar[d, "\stquot{2}"]         \\
\Aff^2 \ar[r, "m"]      &\Aff^1
\end{tikzcd}\]
of the multiplication map $m(x, y) = xy$ along the squaring map $\stquot{2}$. The map $\ev \stquot{2} : \ev \Aff^1 \to \ev \Aff^1$ hits only even components and is the squaring map there. The multiplication map $m$ sends the $(a, b)$ components of $\ev \Aff^2$ to the $a + b$ component of $\ev \Aff^1$, each map induced by $m$ itself on the components. Pulling back, we describe $\ev \Aff_Q$ with components indexed by pairs $(a, b) \in \NN^2$ that add to an even number $2 | a + b$, each of which is $\Aff_Q$. The space $\ev \Aff_Q$ is log smooth, and the log jet spaces $\ljsp{r}{\Aff_Q, m}$ are strict affine bundles over it. 

This coincides with the description $\ev \Aff_Q = \bigsqcup_{\Hom(Q, \NN)} \Aff_Q$ of Example \ref{ex:evspaffinetoricvar}.

\end{example}

\begin{remark}\label{rmk:Qevsp}

One can define evaluation spaces $\ev_Q X$ for any sharp fs monoid $Q$. The standard log point $P_{\NN}$ is replaced by the point $P_Q$, or $\Spec k$ with log structure
\[M_{P_Q} \coloneqq k^* \oplus Q.\]
The evaluation spaces of affine space are then
\[\ev_Q \Aff^1 = \bigsqcup_Q \Aff^1, \qquad \ev_Q \Aff^n = \bigsqcup_{Q^n} \Aff^n.\]
The examples in this section can be rewritten in this level of generality. 

\end{remark}

\subsection{Log jet spaces in log Gromov--Witten theory}\label{ss:logjetloggw}

The evaluation space $\ev X$ is almost the same as the evaluation stack $\wedge X$. Most of this section requires familiarity with log Gromov--Witten theory at the level of \cite{loggw}.

Recall the construction of the evaluation stack $\wedge X$ of \cite{evaluationspacefamiliesofstandardlogpoints}. Let $\scr P$ be the origin of $\stquot{\Aff^1/\GG_m}$, with underlying stack $B\GG_m$ and rank-one log structure. The map $\scr P \to \scr P^\circ = B\GG_m$ to the underlying stack is the universal ``family of standard log points'' by loc. cit \S 2.3. If $T \to B \GG_m$ classifies a line bundle $L \to T$, give $L$ the divisorial log structure at the zero section $0 : T \to L$. The family of standard log points is the log structure pulled back from $0 : T \to L$.

The evaluation stack $\wedge X$ has functor of points 
\[\wedge X(T) \coloneqq \Hom(T \times_{B\GG_m} \scr P, X)\]
on fs log schemes $T$. The strict pullback square
\[\begin{tikzcd}
P_{\NN} \ar[r] \ar[d]\lpbstrict       &\scr P \ar[d]         \\
\Spec k \ar[r]         &B\GG_m
\end{tikzcd}\]
yields equality
\[\wedge X \times_{B\GG_m} \Spec k = \ev X.\]
Because $\Spec k \to B\GG_m$ is a strict $\GG_m$-torsor, the same holds for $\ev X \to \wedge X$.

\begin{proposition}\label{rmk:compareevalspabram}
The map $\ev X \to \wedge X$ is a strict torsor for $\GG_m$. 
\end{proposition}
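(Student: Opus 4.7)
The plan is to deduce the proposition directly from the strict pullback square
\[\begin{tikzcd}
P_{\NN} \ar[r] \ar[d]\lpbstrict & \scr P \ar[d] \\
\Spec k \ar[r] & B\GG_m
\end{tikzcd}\]
displayed just above the statement. First I would re-derive the identification $\wedge X \times_{B\GG_m} \Spec k = \ev X$ at the level of functors of points: for an fs log scheme $T$, a $T$-point of the left-hand side is a map $T \to \Spec k$ together with a morphism $T \times_{B\GG_m} \scr P \to X$, and the strict pullback identifies $T \times_{B\GG_m} \scr P$ with $T \times P_{\NN} = T \times \ljet{r}{0}$ whenever $T$ factors through $\Spec k$. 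This recovers Definition \ref{defn:evalsp} on the nose, giving a canonical isomorphism between the fs log algebraic stacks $\wedge X \times_{B\GG_m} \Spec k$ and $\ev X$ over $\wedge X$.

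Next I would observe that $\Spec k \to B\GG_m$ is tautologically a strict $\GG_m$-torsor: it is the universal trivialization of the tautological line bundle, with both source and target endowed with trivial log structure. Strict $\GG_m$-torsors are stable under arbitrary base change in the category of fs log algebraic stacks --- torsor-ness is smooth-local on the base, and strictness is preserved under strict pullback of log structures, so the claim reduces to the analogous (and well-known) fact in ordinary algebraic stacks. Applying base change along $\wedge X \to B\GG_m$ therefore endows $\ev X \to \wedge X$ with the structure of a strict $\GG_m$-torsor.

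The only subtlety worth checking is that the $\GG_m$-action thereby produced agrees with the intended one on $\ev X$ (the action that rescales jets, equivalently the generator of the log structure on $\ljet{r}{0}$). This is a matter of tracking the $\GG_m$-action on the atlas through the pullback: an element $\lambda \in \GG_m$ acts on the trivial torsor $\Spec k \to B\GG_m$ by the automorphism that rescales the tautological section, and pulling $\scr P$ back along this automorphism rescales the generator of the log structure of $P_{\NN}$ by $\lambda$, which is exactly the scaling action on $\ljet{r}{0}$. I do not expect any real obstacle; the proposition is essentially a formal consequence of the setup already in place, and the load-bearing content is the strict pullback square isolated in the paragraph preceding the statement.
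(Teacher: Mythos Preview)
Your proposal is correct and follows exactly the argument the paper gives in the paragraph immediately preceding the proposition: identify $\ev X$ as the fs pullback $\wedge X \times_{B\GG_m} \Spec k$ via the strict square, then use that $\Spec k \to B\GG_m$ is a strict $\GG_m$-torsor and that this property is preserved under base change. Your additional paragraph verifying that the induced $\GG_m$-action coincides with the rescaling action is a nice elaboration the paper leaves implicit, but the core argument is the same.
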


\begin{remark}

One can likewise define a non-fs version of $\wedge X$ or allow general fs sharp monoids $Q$, replacing $\scr P$ with the origin of $\af[Q] = \stquot{\Aff_Q/\Aff_{\gp Q}}$ over $B \Aff_{\gp Q}$. One reason the original evaluation space $\wedge X$ requires caution with all log schemes is that the quotient map $M_T \to \bar M_T$ is likely not a torsor for $\OO^*_T$. One can replace $\bar M_T$ by the stack quotient to retain some features similar to the case of integral log schemes. Avoid this discussion by defining ``families of log points'' as pullbacks of the universal object
\[\scr P_Q \to B \Aff_{\gp Q}\]
instead. 

\end{remark}

The evaluation stack $\wedge X$ was invented as a target for the evaluation maps in log Gromov--Witten theory. Let $\Ml(X)$ be the moduli space of log stable maps \cite{loggw}. Over an fs log scheme $T$, it parameterizes families $C \to T$ of log smooth curves of genus $g$ (i.e., nodal curves) with $n$ marked points together with a map $C \to X$ satisfying a stability condition. 

The map $\Ml(X) \to (\wedge X)^n$ restricts a map $C \to X$ to its $n$ marked points. Since these have log structure, this gives a family of log points mapping to $X$. 

Étale-locally in $C \to T$ near a marked point, the curve is isomorphic to $\Aff^1_T \to T$. Restricting the stable map $C \simeq \Aff^1_T \to X$ to the neighborhood of the origin $\ljet{1}{m}\times T \subseteq \Aff^1_T$ gives a $T$-valued log jet of $X$ for any $m$, including $m = \infty$. This almost gives a map
\begin{equation}\label{eqn:evmaptoarcsp}
\Ml(X) \dashrightarrow (\ljsp{r}{X, \infty})^n   
\end{equation}
factoring the evaluation map. The subtlety is the same $\GG_m$ quotient as $\ev X \to \wedge X$.

Write $\tljet{1}{m} \subseteq \stquot{\Aff^1/\GG_m}$ for the strict $m$th infinitesimal neighborhood of the origin, isomorphic to the quotient of $\ljet{1}{m}$ by the action of $\GG_m$. Write $\tljsp{1}{X, m}$ for the stack of maps
\[\tljsp{1}{X, m}(T) \coloneqq \Hom(T \times_{B\GG_m} \tljet{1}{m}, X)\]
from these stacky log jets to $X$. The strict pullback square 
\[\begin{tikzcd}
\ljsp{1}{X, m} \ar[r] \ar[d] \lpbstrict    &\tljsp{1}{X, m} \ar[d]    \\
\Spec k \ar[r]         &B\GG_m
\end{tikzcd}\]
again shows representability of $\tljsp{1}{X, m}$ by a log algebraic stack $\stquot{\ljsp{1}{X, m}/\GG_m}$. The reader can generalize the construction to create $\tljsp{r}{X, m}$ for $r > 1$. 

The rest of this section proves the evaluation maps factor through this stacky log jet space. 

\begin{theorem}\label{thm:jetspevmap}
The map from log stable maps to the evaluation stack factors through the stacky log jet space
\[\Ml(X) \to (\tljsp{1}{X, m})^n \to (\wedge X)^n\]
for any $m$, in particular $m = \infty$. 
\end{theorem}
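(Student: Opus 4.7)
The plan is to construct, for each marked section $i = 1, \ldots, n$, a natural morphism $\Ml(X) \to \tljsp{1}{X, m}$, take their product, and verify that its composition with the order-zero truncation $(\tljsp{1}{X, m})^n \to (\wedge X)^n$ recovers the Abramovich--Chen--Marcus--Wise evaluation map. Given a $T$-point $(\pi : C \to T, \sigma_1, \ldots, \sigma_n, f : C \to X)$ of $\Ml(X)$, I use the $i$-th marked section $\sigma_i : T \to C$ to form the conormal line bundle $N_i := \sigma_i^\ast \mathcal{I}_{\sigma_i}/\mathcal{I}_{\sigma_i}^2$ on $T$, equivalently a morphism $\varphi_i : T \to B\GG_m$. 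This is precisely the line bundle used to construct the evaluation map to $\wedge X$ in \cite{evaluationspacefamiliesofstandardlogpoints}.

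Next I identify the $m$-th infinitesimal neighborhood $\sigma_i^{(m)} \subseteq C$ of the marked section as a strict pullback
\[
\begin{tikzcd}
\sigma_i^{(m)} \ar[r] \ar[d] \lpbstrict & \tljet{1}{m} \ar[d] \\
T \ar[r, "\varphi_i"] & B\GG_m.
\end{tikzcd}
\]
\'Etale-locally on $T$ one can pick a coordinate $x$ on $C$ along $\sigma_i$ so that $(C, \sigma_i)$ is strictly isomorphic to $(\Aff^1_T, 0_T)$ with the marked-point log structure, identifying $\sigma_i^{(m)}$ with $T \times \ljet{1}{m}$; two such choices differ by a unit, i.e., by the $\GG_m$-action whose quotient defines $\tljet{1}{m}$, so the local identifications descend to the square above. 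Restricting $f$ along $\sigma_i^{(m)} \hookrightarrow C$ yields a morphism $T \times_{B\GG_m} \tljet{1}{m} \simeq \sigma_i^{(m)} \to X$, which by the defining functor of $\tljsp{1}{X, m}$ is precisely a $T$-point. Taking the product over $i$ produces the claimed morphism $\Ml(X) \to (\tljsp{1}{X, m})^n$.

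Composing with the truncation to $m = 0$ replaces each $\sigma_i^{(m)}$ by its strict closed substack $\sigma_i^{(0)} = T \times_{B\GG_m} \scr P$, and the resulting map $T \times_{B\GG_m} \scr P \to X$ is by construction the datum defining the evaluation map of loc. cit. The main obstacle is the strict Cartesian square above: one must verify that the log structure induced from $C$ on $\sigma_i^{(m)}$ agrees with the one pulled back from $\tljet{1}{m}$, not merely on underlying schemes, and that the descent from the étale-local trivializations to the global square is well-defined. This reduces to the local structure of log smooth curves at marked points, where the log structure along $\sigma_i$ is divisorial and generated by a local equation for the section, matching the universal generator on $\ljet{1}{m}$ after the identification; the $\GG_m$-rescaling of this generator is exactly what the quotient $\tljet{1}{m}$ is designed to absorb.
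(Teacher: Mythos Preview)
Your overall architecture matches the paper's: build the map by identifying the $m$th infinitesimal neighbourhood of each marked section with a pullback of $\tljet{1}{m}$ along a classifying map $T \to B\GG_m$, then restrict $f$. The factorization through the truncation to $m=0$ is handled correctly.

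The gap is precisely at the sentence ``two such choices differ by a unit, i.e., by the $\GG_m$-action whose quotient defines $\tljet{1}{m}$.'' The automorphism group of $T \times \ljet{1}{m}$ over $T$ is not $\GG_m(T)$ but the larger group $\GG_m(T \times \ljet{1}{m}) = \WR_{\jet{m}/\pt}\GG_m(T)$, i.e.\ units of $\OO_T\adj{t}_m$. The quotient $\tljet{1}{m} = \stquot{\ljet{1}{m}/\GG_m}$ only absorbs the subgroup $\GG_m(T)$, so a family of log $m$-jets twisted by a cocycle in the bigger group need not arise from $B\GG_m$. The paper makes this explicit (the Lemma computing $\Aut(T\times\ljet{1}{m})$ and the characteristic-two example immediately following it), and then proves in Lemma~\ref{lem:Gmtwists} that one can \emph{choose} the local trivializations so that the transition $\tilde w$ on $T''$ lands in $\GG_m(T'')$: take $w$ to come from an $\Aff^1$-chart of the curve itself, so that $\tilde w$ extends to a log automorphism of $\Aff^1_{T''}$, and those are exactly $\GG_m(T'')$. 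Your phrase ``pick a coordinate $x$ on $C$'' is the right instinct, but you then need to argue that two such curve-level coordinates differ on $\sigma_i^{(m)}$ by something extending to a log automorphism of $\Aff^1$, not merely by an arbitrary automorphism of the jet; that is the content you are missing. The paper also needs Lemma~\ref{lem:finiteetaleisom} and Corollary~\ref{cor:finiteetaleisom} to pass from an \'etale chart on $C$ (which is what the local structure of log curves actually gives) to an isomorphism on the infinitesimal neighbourhood, a step you have folded into ``\'etale-locally on $T$ one can pick a coordinate'' without comment.
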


The reason to take the $\GG_m$ quotient is that nontrivial families of curves $C \to T$ could restrict to nontrivial families of log points. These families arise from automorphisms of the log point $P_\NN = \ljet{1}{0}$. Another way to say this is that a curve $C$ with marked point $p$ is isomorphic to $\Aff^1$ with the origin, but there is a noncanonical choice of isomorphism. Different choices of isomorphism with $0 \in \Aff^1$ differ by scaling by $\GG_m$.

The automorphism group scheme of $\ljet{1}{0}$ is $\GG_m$. The same is not true of $\ljet{1}{m}$. 

\begin{lemma}
An automorphism of $T \times \ljet{1}{m}$ over $T$ consists of a unit\footnote{
This is saying that the log automorphism group scheme $\Aut(\ljet{1}{m})$ is the \emph{Weil restriction} $\WR_{\jet{m}/\Spec k} \GG_m$ (Section \S \ref{s:logtansplogjetsp}). 
}
\[u \in \GG_m(T \times \ljet{1}{m})\]
acting via multiplication. 
\end{lemma}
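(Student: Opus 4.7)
The plan is to identify a log automorphism $\phi$ of $T \times \ljet{1}{m}$ over $T$ with the datum of where it sends the distinguished generator $\tau \in \Gamma(M_{T \times \ljet{1}{m}})$ coming from the chart $\NN \to M_{\ljet{1}{m}}$, $1 \mapsto t$. Since $\ljet{1}{m} \to \Spec k$ is integral and saturated, the fs product log structure is the pushout of the pullbacks, and near a geometric point the characteristic monoid is $\bar M_T \oplus \NN$, with $\tau$ mapping to the generator of the $\NN$ summand.

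First I would observe that $\phi$ fixes the pullback log structure from $T$ (since $\phi$ is over $T$), and that on the $\NN$ summand of the characteristic $\phi$ must act by the identity, as $\NN$ admits no nontrivial monoid automorphism. Therefore there is a unique section $u \in \OO^*(T \times \ljet{1}{m}) = \GG_m(T \times \ljet{1}{m})$ with $\phi^\flat(\tau) = u \cdot \tau$.

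Next I would use compatibility of $\phi^\flat$ with the structure map $M \to \OO$. Since $\tau \mapsto t$ and $u \tau \mapsto u t$, the scheme map $\phi^\circ$ must send $t \mapsto u t$, which, together with the $\OO_T$-linearity, determines $\phi^\circ$ completely on $\OO_T[t]/t^{m+1}$. This shows $\phi$ is uniquely determined by $u$, and the assignment $\phi \mapsto u$ is a group homomorphism (multiplication in $\GG_m$ corresponds to composition of the induced automorphisms, since if $\phi^\flat(\tau)=u\tau$ and $\psi^\flat(\tau)=v\tau$ then $(\phi\circ\psi)^\flat(\tau) = \phi^\flat(v)\cdot u\tau = (\phi^*(v)\,u)\tau$, and one checks $\phi^*(v)\,u$ is the appropriate unit).

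For the converse, given $u \in \GG_m(T \times \ljet{1}{m})$, I would define $\phi^*$ by $t \mapsto ut$ and $\phi^\flat$ by $\tau \mapsto u\tau$ (and the identity on the pullback from $T$). The map $\phi^*$ extends to an $\OO_T$-algebra endomorphism of $\OO_T[t]/t^{m+1}$; because $u$ is a unit in this ring, its constant term $u|_{t=0}$ is a unit in $\OO_T$, which is exactly the condition for $t \mapsto ut$ to be an automorphism. Compatibility of $\phi^\flat$ with $\phi^*$ is then immediate, and one checks the fs and integral conditions pose no additional constraints.

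The only mildly delicate point will be ruling out the more general underlying-scheme automorphism $t \mapsto a_1 t + a_2 t^2 + \cdots + a_m t^m$ with $a_1 \in \OO_T^*$ but general higher coefficients: such an automorphism exists at the level of schemes but is not a log automorphism unless $a_1 t + \cdots + a_m t^m = u t$ for a genuine unit $u \in \OO^*(T \times \ljet{1}{m})$, which recovers our parametrization. Making this compatibility precise — the characteristic-monoid argument pinning $\phi^\flat(\tau)$ to a unit multiple of $\tau$ — is the heart of the proof.
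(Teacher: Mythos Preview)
Your argument is correct, and in fact the paper simply writes ``Omitted'' for this proof, so you have supplied considerably more than the paper does. The characteristic-monoid computation $\bar M_{T \times \ljet{1}{m}} = \bar M_T \oplus \underline{\NN}$, together with the observation that an automorphism over $T$ fixes $\bar M_T$ and hence must fix the $\NN$ summand as well (using sharpness of the stalks of $\bar M_T$ to rule out $\bar\tau \mapsto (p,1)$ with $p \neq 0$), is exactly the right mechanism.

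One small correction to your final paragraph: you misidentify what is being ruled out. Any scheme automorphism of the form $t \mapsto a_1 t + a_2 t^2 + \cdots + a_m t^m$ with $a_1 \in \OO_T^*$ is \emph{already} of the form $t \mapsto ut$ with $u = a_1 + a_2 t + \cdots + a_m t^{m-1}$ a unit in $\OO_T\adj{t}_m$; there is nothing to exclude there. What the log structure genuinely rules out are scheme automorphisms $t \mapsto a_0 + a_1 t + \cdots$ with a nonzero nilpotent constant term $a_0$, which can occur when $T$ is non-reduced (for instance $t \mapsto \epsilon + t$ over $T = \Spec \mathbb{F}_2[\epsilon]/\epsilon^2$ with $m=1$). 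Your characteristic-monoid argument correctly forces $\phi^*(t)$ to lie in the image of $\tau$, hence in $t \cdot \OO^*$, excluding such constant terms; it is just that the ``delicate point'' is not the one you named.
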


\begin{proof}
Omitted. 
\end{proof}

If $T = \Spec A$ with trivial log structure for example, the automorphism groups 
\[\Aut(A \adj{t}_m) = A^* + t A \adj{t}_m, \qquad \Aut(A) = A^*\]
differ. These could lead to more complicated families of log jets, necessitating the stack quotient of $\ljsp{1}{X, m}$ by the larger group $\GG_m(T \times \ljet{1}{m})$ for each $T$. Here is an example. 

\begin{example}
Let $k$ be a field of characteristic two. The element
\[1 + t^m \in \GG_m(k \adj{t}_m)\]
squares to the identity, so it lies in $\mu_2(k \adj{t}_m)$. The subgroup scheme generated by $1 + t^m$ is isomorphic to $\mu_2$. 

The Kummer sequence
\[1 \to \mu_2 \to \GG_m \overset{\stquot{2}}{\longrightarrow} \GG_m \to 1\]
is exact as fppf sheaves but not étale sheaves because the characteristic is two. The product
\[X = \GG_m \times \ljet{1}{m}\]
has two actions by $\mu_2$, one on either factor. Write $Z$ for the quotient of $X$ by the diagonal action
\[\mu \times X \to X; \qquad \zeta.(u, f(t)) = (\zeta \cdot u, (1 + t^m)\cdot f(t)).\]
The composite $X \to \GG_m \overset{\stquot{2}}{\longrightarrow} \GG_m$ descends to $Z$, giving a family $Z \to \GG_m$ which is isomorphic to $\GG_m \times \ljet{1}{m} \to \GG_m$ locally in the fppf topology. 

\end{example}

Fortunately, these families do not show up as neighborhoods of marked points of families of curves. Let $C \to T$ be a family of curves with a marked point $p : T \dashrightarrow C$. If $I$ is the ideal of the marked point, write 
\[\begin{split}
D_m &\coloneqq \SSpec C {\OO_C/I^{m+1}}   \\
D &\coloneqq \SSpec C {\left(\OO_C\right)_I^\wedge}
\end{split}\]
for the infinitesimal neighborhoods of the marked point $p$ in $C$. The symbol ${-}^\wedge_I$ refers to completion at $I$. 

Our main theorem \ref{thm:jetspevmap} will result from this proposition.

\begin{proposition}\label{prop:linebundleinfnbd}
Use notation $D_m, C, p, T$ as above. The map $D_m \to T$ is isomorphic to the infinitesimal neighborhood of the zero section of a line bundle $L$ over $T$. 
\end{proposition}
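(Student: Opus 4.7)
The plan is to set $L$ equal to the total space of the normal line bundle $\VV((I/I^2)^\vee)$, equipped with the divisorial log structure at the zero section, and to produce a log isomorphism of $T$-log-schemes between $D_m$ and the $m$-th infinitesimal neighborhood of the zero section of $L$.

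First, the ideal $I = I_p \subseteq \OO_C$ is invertible, since the marked section $p$ is a smooth codimension-one log divisor in the log smooth family $C \to T$. Hence $I/I^2 = p^*I$ is a line bundle on $T$, so $L$ is well-defined, and the $m$-th infinitesimal neighborhood of its zero section inherits a natural rank-one log structure matching the one $D_m$ inherits from $C$.

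Next, log smoothness of $(C, p) \to T$ should yield étale-local trivializations: an étale cover $\{U_\alpha \to T\}$ together with log isomorphisms
\[
D_m|_{U_\alpha} \;\simeq\; U_\alpha \times \ljet{1}{m}
\]
sending the marked section to the origin. This step follows from Kato's chart theorem for log smooth morphisms, applied to the rank-one log structure cut out by the equation of $p$. The same local triviality holds for the $m$-th neighborhood of the zero section of $L$ after trivializing $L$ étale-locally.

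The gluing data is then controlled by the lemma immediately preceding the proposition: on any overlap $U$, a log-automorphism of $U \times \ljet{1}{m}$ over $U$ fixing the origin is multiplication by a unit $u \in \GG_m(U \times \ljet{1}{m})$. Restricting $u$ to the origin section yields an element of $\GG_m(U)$, which is precisely the transition data for a line bundle on $T$. Therefore the gluing reduces the structure group from the full log-automorphism group down to $\GG_m$, identifying $D_m$ with the $m$-th infinitesimal neighborhood of the zero section of some line bundle on $T$; matching first-order normal data forces this line bundle to be $L = (I/I^2)^\vee$.

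The hard part is the second step: establishing log-isomorphic étale-local trivializations of $D_m$, rather than merely underlying-scheme étale models. Without working in the log category this reduction would fail, because underlying-scheme automorphisms of $\OO_T[t]/t^{m+1}$ fixing the ideal $(t)$ include unipotent corrections $t \mapsto t + a_2 t^2 + \cdots + a_m t^m$. These are killed by the log structure via the preceding lemma, which is precisely why the log formalism gives a clean answer and the line-bundle description holds on the nose.
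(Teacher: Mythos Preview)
There is a genuine gap in your reduction-of-structure-group step, and it comes from a misreading of the preceding lemma. That lemma says a log automorphism of $U \times \ljet{1}{m}$ over $U$ is multiplication by a unit $u \in \GG_m(U \times \ljet{1}{m})$. Such a unit has the form $u = u_0 + u_1 t + \cdots + u_m t^m$ with $u_0 \in \OO_U^*$, and it acts by $t \mapsto u\cdot t = u_0 t + u_1 t^2 + \cdots$. In other words, the ``unipotent corrections'' $t \mapsto t + a_2 t^2 + \cdots$ are \emph{not} killed by the log structure: they are exactly multiplication by the unit $1 + a_2 t + \cdots$, and they survive as log automorphisms. The log structure only rules out translations $t \mapsto t + a_0$. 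So your transition cocycle lives in $\GG_m(U \times \ljet{1}{m})$, not in $\GG_m(U)$, and restricting $u$ to its constant term $u_0$ gives you a line bundle but does \emph{not} show that $D_m$ is glued by that line-bundle cocycle. The paper even exhibits (in characteristic two) a family $Z \to \GG_m$ that is \'etale-locally $\ljet{1}{m} \times \GG_m$ but is twisted by the unit $1 + t^m$, so the reduction you claim is not automatic.

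The paper closes this gap with Lemma~\ref{lem:Gmtwists}, and the key input is precisely that $D_m$ arises as the infinitesimal neighborhood of a marked point on a \emph{curve}. One first uses Lemma~\ref{lem:finiteetaleisom} to upgrade an \'etale-on-$C$ chart $\hat C \simeq \Aff^1_T$ to an honest isomorphism $\hat D \simeq D$ after localizing only in $T$. The trivialization $w$ of $D$ is then chosen to factor through a chart of $\Aff^1_T$, so the transition automorphism $\tilde w$ on the double overlap extends to a log automorphism of $\Aff^1$. But log automorphisms of $\Aff^1$ with its toric log structure are only $\GG_m$, not the larger group $\GG_m(\ljet{1}{m})$; this is what forces $\tilde w \in \GG_m(T'')$ and gives the line bundle. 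Your argument never uses that $D_m$ sits inside a curve, and without that extra rigidity the conclusion can fail.
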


The curve $C$ is isomorphic to $\Aff^1_T$ in a neighborhood of $p$. This means there is a diagram 
\[
\begin{tikzcd}
C' \ar[r] \ar[d]      &C \ar[d]      \\
T' \ar[r]      &T
\end{tikzcd}
\]
with $T'\to T$ and $C'\to C \times_T T'$ étale maps and an isomorphism $C'\simeq \Aff^1_{T'}$ such that the zero section agrees with $p$ on $T'$ \cite[pg. 222]{fkatologdefthy}, \cite[Theorem 1.1]{loggw}. The $D_m$ are then isomorphic to $\ljet{1}{m} \times T'$ over $T'$. 

We first show that this can be done for $D_m$ by localizing only in $T$.

\begin{lemma}\label{lem:finiteetaleisom}
Let $E \to \Spec A$ be a proper map admitting a section $\sigma : \Spec A \dashrightarrow E$ that is a universal homeomorphism. Suppose $f : \ljet{1}{m} \times \Spec A \to E$ is étale and surjective and that it sends the zero section to $\sigma$. 
Then $f$ is an isomorphism. 

\end{lemma}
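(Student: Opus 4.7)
The plan is to deduce the lemma from topological invariance of the \'etale site along the universal homeomorphism $\sigma$. The key step is to show that the base change of $f$ along $\sigma$ is an isomorphism onto $\Spec A$; the equivalence of \'etale sites then transports this back to $f$ itself.

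I pull $f$ back along $\sigma$ to form
\[Z \coloneqq \Spec A \times_E (\ljet{1}{m} \times \Spec A),\]
an \'etale $\Spec A$-scheme. The hypothesis $f \circ 0 = \sigma$ produces a section $s : \Spec A \to Z$ assembled from the identity of $\Spec A$ and the zero section $0$. As a section of a separated morphism (\'etale implies separated), $s$ is a closed immersion; by cancellation of \'etale morphisms $s$ is itself \'etale, and a flat closed immersion of locally finite presentation is an open immersion. Thus $s$ is clopen, and we may split $Z = \Spec A \sqcup Z'$ for some \'etale $\Spec A$-scheme $Z'$.

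Next I consider the closed immersion $Z \hookrightarrow \ljet{1}{m} \times \Spec A$ obtained by base changing the closed immersion $\sigma$ along $f$. The underlying topological space of $\ljet{1}{m} \times \Spec A$ coincides with that of $\Spec A$, so $|Z| \subseteq |\Spec A|$. Composing $s$ with the closed immersion of $Z$ into $\ljet{1}{m} \times \Spec A$ recovers the zero section, which is a homeomorphism on topology. The resulting factorisation $|\Spec A| \hookrightarrow |Z| \hookrightarrow |\Spec A|$ of a bijection forces $|Z| = |\Spec A|$, whence $Z' = \emptyset$ and $s$ is an isomorphism. Topological invariance of the \'etale site for the universal homeomorphism $\sigma$ then identifies $f$, as an \'etale cover of $E$, with its base change $Z \to \Spec A$, which we have just shown to be an isomorphism; hence $f$ itself is an isomorphism. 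The main obstacle is the topological comparison producing $|Z| = |\Spec A|$, which is where the universal homeomorphism hypothesis on $\sigma$ is used essentially; once this is in place, the clopen splitting leaves no room for a non-trivial cover and the equivalence of \'etale sites under universal homeomorphisms closes the argument.
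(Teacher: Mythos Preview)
Your proof is correct and follows essentially the same route as the paper's: form the fibre product $Z$ (the paper calls it $U$) along $\sigma$, use the topology $|\ljet{1}{m} \times \Spec A| = |\Spec A|$ together with the clopen section to force $Z \simeq \Spec A$, and then invoke topological invariance of the \'etale site along the universal homeomorphism $\sigma$. The only cosmetic difference is that the paper pauses to observe $f$ is finite \'etale (via properness and cancellation) before appealing to \cite[04DZ]{sta}, whereas you apply topological invariance directly to the \'etale morphism $f$; since the cited result holds for arbitrary \'etale morphisms, your version is just as valid.
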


\begin{proof}

We first argue that $f$ induces an isomorphism of the zero section with $\sigma$. Write $U \subseteq \ljet{1}{m} \times \Spec A$ for the closed pullback of the section $\sigma$ along $f$
\[
\begin{tikzcd}
U \ar[r] \ar[d] \pb       &\Spec A \ar[d]        \\
\ljet{1}{m} \times \Spec A \ar[r]      &E.
\end{tikzcd}
\]
The map $U \to \Spec A$ is an étale cover with a necessarily open section $\Spec A \dashrightarrow U$ inducing the zero section of $\ljet{1}{m} \times \Spec A$. The closed complement $U \setminus \Spec A$ is a closed subset of $\ljet{1}{m} \times \Spec A$ that does not meet the zero section, hence the empty set. Thus $U \longequals \Spec A$. 

The map $f$ is finite étale because $\ljet{1}{m} \times \Spec A$ is proper over $\Spec A$. It factors through the trivial finite étale cover $E \longequals E$. These two finite étale covers coincide over the closed section $\sigma$. By topological invariance of the étale site \cite[04DZ]{sta}, they are isomorphic.

\end{proof}

\begin{corollary}\label{cor:finiteetaleisom}
Use notation as above. If $\ljet{1}{\infty} \times T \to D$ is an étale surjection sending the zero section to the marked point $p : T \to D$, it is an isomorphism. 
\end{corollary}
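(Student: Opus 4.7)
The plan is to apply Lemma \ref{lem:finiteetaleisom} at each finite truncation $\ljet{1}{m}\times T\to D_m$ and then pass to the inverse limit. Localizing in $T$, I may assume $T=\Spec A$ is affine, so that $\ljet{1}{\infty}\times T=\Spec A\adj{t}$ and $D=\Spec B$ for an $I$-adically complete $A$-algebra $B$ with $B/I=A$. The étale surjection $f$ corresponds to a faithfully flat étale ring map $\phi:B\to A\adj{t}$ whose composite with $t\mapsto 0$ equals the quotient $B\to A$ coming from $p$, so $\phi(I)\subseteq(t)$ and $\phi^{-1}((t))=I$.

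The key technical step is the ideal identification $\phi(I)A\adj{t}=(t)$, which would give $f^{-1}(D_m)=\ljet{1}{m}\times T$ for every $m$. Base-changing $\phi$ along $B\twoheadrightarrow A$ makes $A\adj{t}/\phi(I)A\adj{t}$ étale over $A$, and the compatibility $f\circ 0=p$ exhibits $A=A\adj{t}/(t)$ as a quotient of this étale cover. I would then verify the identification by standard properties of étale morphisms between adically complete rings, reducing fibrewise over $\Spec A$ to the case of étale maps between complete DVRs where the conclusion is classical (an étale map between complete local rings inducing the identity on residue fields is an isomorphism). Taking powers yields $\phi(I^{m+1})A\adj{t}=(t)^{m+1}$ for every $m$.

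With this identification in hand, $f$ restricts to étale surjections $\ljet{1}{m}\times T\to D_m$ sending the zero section to the marked point. Since $D_m\to T$ is finite (hence proper) and $p:T\hookrightarrow D_m$ is a universal homeomorphism, Lemma \ref{lem:finiteetaleisom} gives an isomorphism $\ljet{1}{m}\times T\xrightarrow{\sim}D_m$, compatibly in $m$. Passing to the inverse limit and invoking the $I$-adic completeness of $B$ together with the $(t)$-adic completeness of $A\adj{t}$,
\[B \;=\; \varprojlim_m B/I^{m+1} \;\xrightarrow{\sim}\; \varprojlim_m A[t]/(t)^{m+1} \;=\; A\adj{t},\]
so $f$ itself is an isomorphism. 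The main obstacle is the ideal identification $\phi(I)A\adj{t}=(t)$; once settled, the rest is the previous lemma applied at each truncation and a routine passage to the limit.
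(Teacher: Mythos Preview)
Your approach is the same as the paper's: apply Lemma \ref{lem:finiteetaleisom} at each finite truncation and pass to the limit. You are right to flag the ideal identification $\phi(I)A\adj{t}=(t)$ as a necessary step; the paper's one-line proof glosses over why $f$ restricts to an \'etale surjection $\ljet{1}{m}\times T\to D_m$ rather than merely $f^{-1}(D_m)\to D_m$. However, your proposed route via ``reducing fibrewise over $\Spec A$ to complete DVRs'' is not quite a proof: one cannot in general check such an equality of ideals fibre by fibre. The clean argument is global and immediate: the closed subscheme $f^{-1}(p)=\Spec A\adj{t}/\phi(I)A\adj{t}$ is \'etale over $T=\Spec A$ by base change, and since $\phi(I)\subseteq(t)$ it has the same underlying topological space as $T$; an \'etale universal homeomorphism is an isomorphism, so $\phi(I)A\adj{t}=(t)$. (Equivalently, $A\adj{t}/\phi(I)A\adj{t}$ is an \'etale $A$-algebra with a retraction to $A$, hence splits off a factor $A$ via an idempotent; idempotents in $A\adj{t}$ come from $A$, forcing the other factor to vanish.) This is essentially the first paragraph of the proof of Lemma \ref{lem:finiteetaleisom} run at the $\infty$-level.

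Your passage to the limit via $I$-adic completeness of $B$ and $(t)$-adic completeness of $A\adj{t}$ is correct and more elementary than the paper's invocation of Bhatt's theorem, which is overkill here: once you have compatible isomorphisms $B/I^{m+1}\simeq A\adj{t}_m$ all induced by $\phi$, the inverse limit recovers $\phi$ itself as an isomorphism.
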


\begin{proof}

The $m$th infinitesimal neighborhoods are all isomorphic by Lemma \ref{lem:finiteetaleisom}. The isomorphism over each finite $m$ results in the infinite case $\ljet{1}{\infty} \times \Spec A \simeq D$ by applying Bhatt's theorem \cite[Theorem 4.1, Remark 4.6]{bhatttannakaalgn}. 

\end{proof}

The automorphism group scheme of the ordinary affine line $(\Aff^1)^\circ$ consists of affine transformations $\GG_m \ltimes \Aff^1$. The automorphisms compatible with the log structure of $\Aff^1$ are $\GG_m$. We use this to constrain the automorphisms of infinitesimal neighborhoods of marked points arising in families.

There exists an étale surjection $T' \to T$ and an isomorphism $w : D \times_T T' \simeq \ljet{1}{\infty} \times T'$ over $T'$. Pull this isomorphism back both possible ways to $T'' \coloneqq T'\times_T T'$ to get an automorphism
\[\tilde w : \ljet{1}{\infty} \times T'' \simeq D \times_T T'' \simeq \ljet{1}{\infty} \times T''.\]

\begin{lemma}\label{lem:Gmtwists}
Suppose $D$ is the infinitesimal neighborhood of a marked point $p : T \dashrightarrow C$ of a curve as above. There is a choice of $w : D \times_T T' \simeq \ljet{1}{\infty} \times T'$ such that the automorphism $\tilde w$ lies in $\OO_{T''}^*$. That is, it lies in the subgroup
\[\GG_m(T'') \subseteq \GG_m(T'' \times \ljet{1}{m})\]
of the group of all automorphisms of $T'' \times \ljet{1}{m}$. 
\end{lemma}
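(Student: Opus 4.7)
I would construct $w$ by induction on the order $m$, using that the automorphism group scheme $G := \Aut_T(\ljet{1}{\infty} \times T)$ admits a filtration $G = G_0 \supseteq G_1 \supseteq G_2 \supseteq \cdots$ by the normal subgroups $G_n := \{t \mapsto t + O(t^{n+1})\}$, with quotients $G_0/G_1 \simeq \GG_m$ and $G_n/G_{n+1} \simeq \GG_a$ for $n \geq 1$. Tracking how the coefficient $c$ in an automorphism $t \mapsto t + c\, t^{n+1}$ transforms under a rescaling $t \mapsto \gamma t$ identifies $G_n/G_{n+1}$ canonically with the additive torsor of the line bundle $L^{\otimes n}$, where $L := I/I^2$ is the conormal line bundle of $p$ in $C$. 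The goal is to inductively arrange the cocycle at each finite order $m$ to lie in the constant subgroup $\GG_m(T'') \subseteq G$, and then pass to $m=\infty$ via Bhatt's theorem.

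\emph{Base case.} Refine $T' \to T$ so that $T$, $T'$, and $T''$ are all affine and $L|_{T'}$ is trivialized by a generator $\lambda$ (possible since $L$ is a line bundle on $T$). The first-order neighborhood $D_1$ is the spectrum of the square-zero extension $\OO_T \oplus L$, so the trivialization $\lambda$ yields $w^{(1)} : D_1 \times_T T' \simeq \ljet{1}{1} \times T'$ by $t \mapsto \lambda$, whose cocycle on $T''$ is the transition $\gamma \in \GG_m(T'')$ of $L$.

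\emph{Inductive step.} Suppose $w^{(m)}$ has cocycle $\gamma \in \GG_m(T'')$. Smoothness of $C$ at $p$ together with Lemma \ref{lem:finiteetaleisom} lets us extend $w^{(m)}$ to some trivialization $w^{(m+1)}$ of $D_{m+1} \times_T T'$. Its cocycle reduces modulo $G_m$ to $\gamma$, so differs from $\gamma$ by a section $c \in \Gamma(T'', L^{\otimes m})$ of the quotient $G_m/G_{m+1}$. The ambiguity in the lift is the group of automorphisms $t \mapsto t + a\, t^{m+1}$ with $a \in \Gamma(T', L^{\otimes m})$, and this modification changes $c$ by the \v{C}ech coboundary of $a$. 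Because $T$ is affine and $L^{\otimes m}$ is quasi-coherent, $\check H^1(T'/T, L^{\otimes m}) = 0$, so we can solve $\delta a = c$ and obtain a new lift whose cocycle lies in $\GG_m(T'') \subseteq \Aut(\ljet{1}{m+1} \times T'')$.

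\emph{Passage to the limit.} Iterating produces a compatible family $\{w^{(m)}\}_{m \geq 1}$ of trivializations, all with cocycle $\gamma \in \GG_m(T'')$. By Bhatt's theorem (Proposition \ref{prop:bhattthm}), this family assembles into a trivialization $w : D \times_T T' \simeq \ljet{1}{\infty} \times T'$, and its cocycle $\tilde w$ is the compatible limit of the $\gamma$'s, hence still lies in $\GG_m(T'')$.

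The main obstacle is the \v{C}ech cohomology vanishing in the inductive step, which is what forces the reduction to an affine $T'$ at the outset; once that reduction is made, the computation of $G_n/G_{n+1} \simeq L^{\otimes n}$ and the step-by-step descent are formal.
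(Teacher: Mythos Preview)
Your inductive, cohomological approach is genuinely different from the paper's, and the filtration of $G = \Aut(\ljet{1}{\infty})$ with successive quotients $\GG_m, \GG_a, \GG_a, \ldots$ twisted to powers of the conormal line $L = I/I^2$ is correctly identified. The difficulty is the sentence ``Refine $T' \to T$ so that $T$, $T'$, and $T''$ are all affine.'' Refining the cover $T'$ cannot make the base $T$ affine, and the lemma is stated (and used in Proposition~\ref{prop:linebundleinfnbd} and Theorem~\ref{thm:jetspevmap}) for arbitrary $T$. Without $T$ affine, the vanishing $\check H^1(T'/T, L^{\otimes m}) = 0$ fails in general, and your inductive step breaks down.

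This is not a cosmetic gap. Your argument, as written, never uses the hypothesis that $D$ arises as the formal neighbourhood of a section of an actual \emph{curve}: it would apply to any \'etale-locally trivial $\ljet{1}{\infty}$-bundle over $T$. But over a non-affine base such bundles need not reduce to $\GG_m$. For instance, over $T = \PP^1$ with $L = I/I^2$ a suitable $\OO(k)$, one has $H^1(\PP^1, L^{\otimes m}) \neq 0$ for some $m \ge 1$, and the corresponding twist produces a formal disk bundle whose cocycle cannot be brought into $\GG_m(T'')$. So some extra input from the curve is essential.

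The paper supplies exactly that input and bypasses cohomology entirely. Since $C \to T$ is a smooth curve near $p$, after an \'etale cover $T' \to T$ one can find an \'etale neighbourhood $\hat C \to C \times_T T'$ with $\hat C \simeq \Aff^1_{T'}$, sending the zero section to $p$. Corollary~\ref{cor:finiteetaleisom} then shows the induced map on formal neighbourhoods $\hat D \to D \times_T T'$ is already an isomorphism, and one takes $w$ to be this isomorphism. The point is that the descent datum $\tilde w$ on $T''$ now extends to an automorphism of $\Aff^1_{T''}$ compatible with its log structure at the origin; but the only such automorphisms are scalings by $\GG_m(T'')$. No $H^1$ vanishing is needed because the restriction from $\Aut^{\log}(\Aff^1)$ to $\Aut(\ljet{1}{\infty})$ already lands in the subgroup $\GG_m$.
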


\begin{proof}

Let $\hat C \to C$ be an étale cover with an isomorphism $\hat C \simeq \Aff^1_{T}$. Write $\hat D \subseteq \hat C$ for the infinitesimal neighborhood of the zero section. The induced map $\hat D \to D$ is an isomorphism by Corollary \ref{cor:finiteetaleisom}. Choose $w$ to factor the composite
\[D \simeq \hat D \subseteq \hat C \simeq \Aff^1_{T}.\]

Then $\tilde w$ fits into a diagram
\[\begin{tikzcd}
\ljet{1}{m} \times T'' \ar[r] \ar[d]          &\hat D \times_T T'' \ar[d] \ar[r]        &\ljet{1}{m} \times T'' \ar[d]      \\
\Aff^1_{T''} \ar[r]    &\hat C \times_T T'' \ar[r]        &\Aff^1_{T''}.
\end{tikzcd}\]
But automorphisms of $\Aff^1$ with log structure are $\GG_m$. 

\end{proof}

\begin{proof}[Proof of Proposition \ref{prop:linebundleinfnbd}]

The line bundle $L$ is obtained by gluing $\Aff^1_{T'}$ along the isomorphism extending $\tilde w$ obtained in the proof of Lemma \ref{lem:Gmtwists}. Étale-local line bundles are the same as Zariski-local line bundles. 
    
\end{proof}

\begin{proof}[Proof of Theorem \ref{thm:jetspevmap}]

The universal infinitesimal neighborhood of the zero section in a line bundle is $\tljet{1}{m} \to B\GG_m$. Proposition \ref{prop:linebundleinfnbd} shows any $D \to T$ induced from the infinitesimal neighborhood of a marked point $p : T \dashrightarrow C$ of a curve $C$ is isomorphic in $T$ to the infinitesimal neighborhood of a line bundle $L$ over $T$. The restriction of a map $C \to X$ to $D \to X$ is therefore an element of the Weil restriction along $\tljet{1}{m} \to B\GG_m$ of the target $X$, i.e. the stacky log jet space
\[\tljsp{1}{X, m} = \HHom_{B\GG_m}(\tljet{1}{m}, X).\]
    
\end{proof}

\begin{remark}[Thanks to Sam Molcho]
    
Consider the analogous map $\Psi : \Ms(X) \to (\tjsp{X, \infty})^n$ without log structure. Its image is a closed substack of ``integrable'' jets, in the sense that they extend to curves. 

The map $\Psi$ is not injective, for example if $n = 0$. It should however restrict to an injection on the locally closed strata of $\Ms(X)$ where each component of the source curve $C$ has a marked point. 

To see this, replace $X$ by an ambient projective space $X \subseteq \PP^N$. Assume no component or marked point of $C$ maps into a coordinate hyperplane of $\PP^N$ by change of coordinates. Pull back $\Aff^N \subseteq \PP^N$ to get a holomorphic map $U \to \Aff^N$ from a dense open $U \subseteq C$ containing all the marked points. By elementary complex analysis, a holomorphic function is determined by its analytic expression. The map $U \to \PP^N$ is then uniquely determined by its arc at each marked point. Extending to the entire curve $U \subseteq C$ is immediate. 


\end{remark}

\subsection{Log jets of exotic objects}\label{ss:logjetsexotic}

Functors $X$ on fs log schemes which admit a log \'etale cover by a log scheme $Y \to X$ are called ``logarithmic spaces.'' The main example is the log ``multiplicative group''
\[\begin{split}
\Glog(S) &\coloneqq \Gamma(\gp{M}_S)         
\end{split}.\]
The map $\PP^1 \to \Glog$ of the element $x-y \in \Gamma(\gp M_{\PP^1})$ is a log \'etale cover \cite[Lemma 2.2.7.3]{logpic}, but $\Glog$ is not itself representable by a log scheme \cite[Lemma 2.2.7.2]{logpic}. The fan of the toric variety $\PP^1$ is a line subdivided at the origin, whereas the ``fan'' of $\Glog$ is the line without subdivision at the origin. 

The related tropical multiplicative group 
\[\Gtrop (S) \coloneqq \Gamma(\gp{\bar M}_S)\]
is the stack quotient of $\Glog$ by the action of $\GG_m$. It has a log étale cover by $\stquot{\PP^1/\GG_m}$.

\begin{remark}

We cannot use Lemma \ref{lem:evspmaplogetmonom} to deduce the log jet space $\ljsp{r}{\Glog, m}$ of $\Glog$ is representable by a logarithmic space. The log blowup $\PP^1 \to \Glog$ need not lead to a cover $\ljsp{r}{\PP^1, m} \to \ljsp{r}{\Glog, m}$, as in the non-surjective Examples \ref{ex:evsprootstack} and \ref{ex:evspblowup}. We instead show representability directly. 

\end{remark}

\begin{lemma}\label{lem:logjetstorsors}

Let $G$ be a group scheme with trivial log structure and $P \to X$ a strict map which is a strict \'etale torsor for $G$. The log jet space $\ljsp{r}{G, m}$ has a group structure. Furthermore, the map on log jet spaces $\ljsp{r}{P, m} \to \ljsp{r}{X, m}$ is a torsor for the sheaf of groups $\ljsp{r}{G, m}$. 

\end{lemma}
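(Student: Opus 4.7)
The plan is to transfer everything through the functor $\ljsp{r}{-,m}$, leveraging its compatibility with fs fiber products (Remark \ref{rmk:logjetspacepb}) and the strict-base-change cartesian square of the Remark following Lemma \ref{lem:letpb}.

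First I would give $\ljsp{r}{G,m}$ its group structure. Since $G$ has trivial log structure, the structure maps $G \times G \to G$, $\pt \to G$, and the inversion $G \to G$ are all strict, so the fs products appearing in the group axioms coincide with ordinary products. Applying the product-preserving functor $\ljsp{r}{-,m}$ then carries the group-object structure on $G$ to a group-object structure on $\ljsp{r}{G,m}$, which is representable by a log algebraic space by Theorem \ref{thm:repability}.

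Next, to show $\ljsp{r}{P,m} \to \ljsp{r}{X,m}$ is a $\ljsp{r}{G,m}$-torsor I would check the shear isomorphism and strict étale local triviality separately. The torsor hypothesis on $P \to X$ gives a shear isomorphism $G \times P \simeq P \times_X P$; both products are fs (either a factor has trivial log structure, or the map being pulled back is strict), so applying $\ljsp{r}{-,m}$ immediately yields
\[
\ljsp{r}{G,m} \times \ljsp{r}{P,m} \;\simeq\; \ljsp{r}{P,m} \times_{\ljsp{r}{X,m}} \ljsp{r}{P,m}.
\]
For the cover condition, the strict cartesian square of the Remark after Lemma \ref{lem:letpb} reduces the claim to showing that $\jsp{P,m} \to \jsp{X,m}$ is strict étale surjective, which is classical: ordinary jet spaces pull back cleanly along the étale map $P \to X$, making this just the pullback of the original $G$-torsor along $\jsp{X,m} \to X^\circ$.

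The argument is essentially formal; the main (mild) obstacle is the bookkeeping required to verify that each fs fiber product invoked really does coincide with the ordinary one, so that the functor $\ljsp{r}{-,m}$ applies cleanly. All such squares end up being strict precisely because $G$ has trivial log structure and $P \to X$ is strict by hypothesis, so this verification is routine.
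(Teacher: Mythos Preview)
Your proof is correct and takes essentially the same approach as the paper, which simply states that the result follows formally from the compatibility of $\ljsp{r}{-,m}$ with limits (Remark \ref{rmk:logjetspacepb}). You have spelled out in more detail exactly how this formal argument goes, including the useful observation that strictness of $P \to X$ and triviality of the log structure on $G$ ensure all relevant fs fiber products agree with the ordinary ones.
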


We impose strictness just to avoid comparing torsors for ordinary and fs pullbacks. 

\begin{proof}

This results formally from the compatibility of log jet spaces $\ljsp{r}{-, m}$ with limits in Remark \ref{rmk:logjetspacepb}.

\end{proof}

\begin{proposition}

The log jet spaces $\ljsp{r}{\Glog, m}$ and $\ljsp{r}{\Gtrop, m}$ of the log and tropical ``multiplicative groups'' $\Glog, \Gtrop$ are representable by logarithmic spaces. In fact,
\begin{equation}\label{eqn:logjetspGtrop}
\ljsp{r}{\Gtrop, m} = \bigsqcup_\ZZ \Gtrop,    
\end{equation}
and the log jet space of $\Glog$ forms a $\ljsp{r}{\GG_m, m}$-torsor over this space. 

\end{proposition}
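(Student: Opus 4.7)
The plan is to directly compute the functor of points of each log jet space in terms of sections of the characteristic sheaf, and then recognize the answer.

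First I would identify the characteristic sheaf of the fs product $T \times \ljet{r}{m}$. Since $\ljet{r}{m} \to \pt$ is log smooth, integral and saturated, the underlying scheme of this fs product is $T^\circ \times \jet{m}$ and the characteristic sheaf is the pushout of the characteristic sheaves pulled back from each factor. As $\bar M_{\ljet{r}{m}} = \underline{\NN}$, this yields
\[\gp{\bar M}_{T \times \ljet{r}{m}} \;=\; p_T^*\, \gp{\bar M}_T \,\oplus\, \underline{\ZZ}.\]
Taking global sections and using that $\jet{m}$ is connected (so $\pi_0(T \times \jet{m}) = \pi_0(T)$), I would conclude
\[\Gamma\!\left(\gp{\bar M}_{T \times \ljet{r}{m}}\right) \;=\; \Gamma(\gp{\bar M}_T) \,\oplus\, \Hom\!\left(\pi_0(T), \ZZ\right).\]

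Next, from the functor-of-points definition of $\Gtrop$, a $T$-point of $\ljsp{r}{\Gtrop, m}$ is a section of $\gp{\bar M}_{T \times \ljet{r}{m}}$, which by the previous paragraph is the same as a pair of a section of $\gp{\bar M}_T$ and an element of $\Hom(\pi_0(T), \ZZ)$. The second factor is represented by $\bigsqcup_{\ZZ} \pt$. This gives equation \eqref{eqn:logjetspGtrop}
\[\ljsp{r}{\Gtrop, m} \;=\; \Gtrop \,\times\, \bigsqcup_{\ZZ} \pt \;=\; \bigsqcup_{\ZZ} \Gtrop,\]
and shows $\ljsp{r}{\Gtrop, m}$ is a logarithmic space.

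For $\Glog$, I would observe that the map $\Glog \to \Gtrop$ is a strict étale $\GG_m$-torsor: strictness is clear from the definitions, and étale-local trivialization follows from the fact that the quotient $\gp M_T \to \gp{\bar M}_T$ by $\OO_T^*$ admits étale-local sections for any fs log scheme $T$. Since $\GG_m$ has trivial log structure, Lemma \ref{lem:logjetstorsors} then produces a strict torsor structure
\[\ljsp{r}{\Glog, m} \;\longrightarrow\; \ljsp{r}{\Gtrop, m}\]
for the group sheaf $\ljsp{r}{\GG_m, m}$. Representability of $\ljsp{r}{\Glog, m}$ by a logarithmic space follows: $\ljsp{r}{\GG_m, m}$ is representable (it is a strict open in $\ljsp{r}{\Aff^1, m}$, by Theorem \ref{thm:repability}), and a torsor under a representable group sheaf over a logarithmic space is itself a logarithmic space. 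The main subtlety to watch is the initial identification of the characteristic sheaf for the \emph{fs} product (as opposed to the ordinary product), which is what allows the rest of the argument to proceed cleanly; once that is in place, everything reduces to bookkeeping with the short exact sequence $1 \to \OO^* \to \gp M \to \gp{\bar M} \to 1$.
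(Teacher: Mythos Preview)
Your proof is correct and follows essentially the same approach as the paper: compute $\ljsp{r}{\Gtrop, m}$ directly from the functor of points via $\Gamma(\gp{\bar M}_{T \times \ljet{r}{m}}) = \Gamma(\gp{\bar M}_T) \oplus \Gamma(\underline{\ZZ})$, and then deduce the $\Glog$ case from Lemma~\ref{lem:logjetstorsors} applied to the $\GG_m$-torsor $\Glog \to \Gtrop$. You spell out a few more details (the fs product subtlety, the reason $\Glog \to \Gtrop$ is a strict \'etale torsor, representability of $\ljsp{r}{\GG_m, m}$), but the argument is the same.
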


\begin{proof}

Lemma \ref{lem:logjetstorsors} addresses the claim for $\Glog$, and it remains to show \eqref{eqn:logjetspGtrop} holds. 

A point $T \to \ljsp{r}{\Gtrop, m}$ is a section $\Gtrop(T \times \ljet{r}{m})$. But the characteristic monoid of $T \times \ljet{r}{m}$ has sections
\[\Gamma(\bar M_{T \times \ljet{r}{m}}) = \Gamma(\bar M_T) \oplus \Gamma(\underline{\ZZ}).\]
Compare with Example \ref{ex:evspA1} of $\Aff^1$. This functor is representable by
\[\Gtrop \times \bigsqcup_\ZZ \Spec k = \bigsqcup_\ZZ \Gtrop.\]

\end{proof}

We do not know if the log jet space of an arbitrary logarithmic space will be representable by a logarithmic space.

An \emph{Artin cone} is a log algebraic stack $\af[P]$ of the form
\[\af[P] \coloneqq \stquot{\Aff_P / \Aff_{\gp P}},\]
with natural log structure making the quotient map $\Aff_P \to \af[P]$ strict. Artin stacks $\scr B$ that admit \'etale covers by Artin cones are called \emph{Artin fans}. We recall them here but direct the reader to \cite{wisebounded}, \cite{rendimentodeicontiwisepandharipandeherrmymolcho} for a proper introduction. These are alternatives to monoschemes \cite{ogusloggeom}, Kato fans \cite{kato94}, or the original tropicalizations of toroidal spaces \cite{kempfknudsenmumfordsaintdonatkkmstoroidalembeddings} reviewed in \cite[Appendix B]{loggw}. 

The point of Artin fans is that every fs log scheme $X$ admits a canonical strict map to an Artin fan \cite{wisebounded}, called the \emph{tropicalization} of $X$
\[X \to \af[X].\]

\begin{example}

If $X$ is a toric variety with dense torus $T$, the Artin fan is the quotient $X \to \stquot{X/T}$. The points of $\stquot{X/T}$ are in bijection with the cones of the fan of $X$. If the dual monoid $\Hom(P, \NN)$ is a cone of the fan of $X$, the corresponding point is the stack $B\gp P$. 

\end{example}

\begin{example}

If $(X, D)$ is a pair of a smooth scheme $X$ and s.n.c. divisor $D$, the tropicalization looks like the ``cone over the intersection complex of $D$.'' At each stratum of $X$ given by an intersection $\bigcap_I D_i$ of $n$ components $D_i \subseteq D$ of $D$, one takes a copy of $\af^n$. These are glued together along specialization maps -- an inclusion $J \subseteq I$ of components leads to a map $\af^{\#I} \to \af^{\# J}$.  

\end{example}

Recall M. Olsson's stack of fs log structures $\Log$
A map $T \to \Log$ of schemes parameterizes an fs log structure $M_T$ on the scheme $T$. The stack $\Log$ carries an \'etale cover by all Artin cones $\af[P]$ ranging over all fs monoids $P$.

\begin{example}
The map $\af[P] \to \Spec k$ is log \'etale. Lemma \ref{lem:letpb} ensures a pullback square
\[\begin{tikzcd}
\ljsp{r}{\af[P], m} \ar[r] \ar[d] \lpbstrict        &\ljsp{r}{\Spec k, m}  \ar[d, equals]      \\
\ev \af[P] \ar[r]      &\ev \Spec k
\end{tikzcd}\]
that identifies $\ljsp{r}{\af[P], m} \simeq \ev \af[P]$. The same argument with $\Log$ in place of $\af[P]$ shows $\ljsp{r}{\Log, m} \simeq \ev \Log$. These evaluation spaces have functor of points on fs log schemes $T$
\[
\begin{split}
\ev \af[P](T) &\coloneqq \Hom(P, \Gamma(\overline{M}_T) \oplus \NN) = \bigsqcup_{\Hom(P, \NN)} \af[P] (T)  \\
\ev \Log (T) &\coloneqq \{\text{another log structure }M'_T \text{ on }T, \text{ with a map }M'_T \to M_T \oplus \NN\}.
\end{split}
\]

\end{example}

\begin{remark}\label{rmk:logstackpbsquare}
Since every log stack admits a strict map $X \to \Log$, we have pullback squares
\[\begin{tikzcd}
\ljsp{r}{X, m} \ar[r] \ar[d] \lpbstrict      &\ljsp{r}{\Log, m} \simeq \ev \Log  \ar[d]     \\
\jsp{X^\circ, m} \ar[r]        &\jsp{\Log^\circ, m}
\end{tikzcd}\]
We prove the ordinary jet spaces of algebraic stacks are themselves algebraic stacks in the appendix. From this square, Corollary \ref{cor:ljetrepablestacks} concludes that the log jet spaces of log algebraic stacks are representable by log algebraic stacks.

\end{remark}

\subsection{Bhatt's theorem}\label{ss:bhattthm}

We now adapt B. Bhatt's theorem describing the arc space as a limit of the jet spaces. In \cite[Theorem 4.1, Remark 4.6]{bhatttannakaalgn}, Bhatt proved the arc space is the limit of the truncation maps
\begin{equation}\label{eqn:bhattthm}
\jsp{X, \infty} = \lim_m \jsp{X, m}    
\end{equation}
for all schemes $X$. Corollary 1.5 of \cite{bhatttanaka2} strengthened this result to hold in particular for $X$ a noetherian Artin stack with affine diagonal. Artin fans are locally noetherian Artin stacks with affine diagonal, so we obtain Bhatt's theorem \eqref{eqn:bhattthm} for log jets by working with Artin fans.

\begin{proposition}\label{prop:bhattthm}

Suppose $X$ is an fs log scheme. Then
\[\ljsp{r}{X, \infty} = \lim_m \ljsp{r}{X, m}.\]
This identification is functorial in $X$. 

\end{proposition}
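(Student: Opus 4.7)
The plan is to reduce the log arc-limit statement to Bhatt's theorem for ordinary jets by pulling back along the tropicalization $X \to \af[X]$. This map is strict, and $\af[X]$ is a locally noetherian log algebraic stack with affine diagonal, so after stripping log structures Bhatt's theorem \cite[Theorem 4.1]{bhatttannakaalgn} and its stacky strengthening \cite[Corollary 1.5]{bhatttanaka2} both apply.

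First I would establish the strict pullback square
\[\begin{tikzcd}
\ljsp{r}{X, m} \ar[r] \ar[d] \lpbstrict      &\ljsp{r}{\af[X], m} \ar[d]     \\
\jsp{X^\circ, m} \ar[r]        &\jsp{(\af[X])^\circ, m}
\end{tikzcd}\]
for all $m$, including $m = \infty$. For finite $m$ this is an instance of Remark \ref{rmk:logstackpbsquare} with $\af[X]$ in place of $\Log$. For $m = \infty$ the square is tautological from strictness of $X \to \af[X]$: a morphism $T \times \ljet{r}{\infty} \to X$ is a compatible pair of an underlying-scheme map $T^\circ \times \Spec k\adj{t} \to X^\circ$ and a lift of log structures, and the latter is pulled back from $\af[X]$. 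Crucially, this argument does \emph{not} invoke the $m = \infty$ case of Lemma \ref{lem:letpb} (whose proof cites the present proposition), so there is no circularity. Observe also that $\af[X] \to \Spec k$ is log étale -- locally it is $\af[P] = \stquot{\Aff_P/\Aff_{\gp P}}$, whose log-relative dimension is zero -- so Lemma \ref{lem:letpb} applied for finite $m$ forces the truncations $\ljsp{r}{\af[X], m+1} \to \ljsp{r}{\af[X], m}$ to be isomorphisms. Hence the tower $\{\ljsp{r}{\af[X], m}\}$ is eventually constant equal to $\ev \af[X]$.

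Taking inverse limits of the pullback square over finite $m$ and using that pullbacks (being finite limits) commute with cofiltered limits gives
\[\lim_m \ljsp{r}{X, m} \simeq \ev \af[X] \times^{\msout{\ell}}_{\lim_m \jsp{(\af[X])^\circ, m}} \lim_m \jsp{X^\circ, m}.\]
The two remaining limits are ordinary arc spaces by the cited Bhatt theorems, applied to the scheme $X^\circ$ and to the locally noetherian Artin stack $(\af[X])^\circ$ with affine diagonal. Substituting yields exactly the $m = \infty$ instance of the first square, which by the argument above equals $\ljsp{r}{X, \infty}$. Functoriality in $X$ descends from functoriality of the tropicalization, of the pullback square, and of the limit. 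The main obstacle I anticipate is the circularity warning at $m = \infty$: one must establish the strict pullback square there directly from the universal property rather than routing through the log étale pullback lemma, and one must confirm that the stack $(\af[X])^\circ$ genuinely satisfies the hypotheses of the stacky Bhatt theorem, both of which are standard facts about strictness and about the structure of Artin fans.
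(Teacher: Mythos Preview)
Your proposal is correct and follows essentially the same route as the paper: reduce to ordinary Bhatt via the strict pullback square along the tropicalization $X \to \af[X]$, use that $\af[X]$ is log \'etale over $\Spec k$ so the upper-right tower collapses to $\ev \af[X]$, and then apply the stacky Bhatt theorem to the underlying Artin stack $(\af[X])^\circ$. Your explicit care in establishing the $m = \infty$ square directly from strictness rather than via Lemma~\ref{lem:letpb} is a useful clarification of a point the paper leaves implicit.
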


\begin{proof}

Write $\af[X]$ for the Artin fan of $X$. For each $m \in \NN \cup \{\infty\}$, the strict map $X \to \af[X]$ ensures a pullback square
\[\begin{tikzcd}
\ljsp{r}{X, m} \ar[r] \ar[d] \lpbstrict     &\ev \af[X] \ar[d]     \\
\jsp{X^\circ, m} \ar[r]      &\jsp{\af[X]^\circ, m}
\end{tikzcd}.\]
We identified $\ev \af[X] \simeq \ljsp{r}{\af[X], m}$. These squares are compatible with the truncation maps. 

Take the limit of these squares over $m$ and apply Bhatt's theorem \cite[Corollary 1.5]{bhatttanaka2} to identify the bottom row with 
\[\jsp{X^\circ, \infty} \to \jsp{\af[X]^\circ, \infty}.\]
Then the fiber products of the squares are the same
\[\ljsp{r}{X, \infty} = \lim_m \ljsp{r}{X, m}.\]

\end{proof}

\subsection{Relative log jets}

Let $X \to S$ be a map of log schemes. Define a functor $\ljsp{r}{X/S, m}$ on affine fs log schemes $\Spec A$ by
\[
\ljsp{r}{X/S, m}(\Spec A) \coloneqq 
\left\{\begin{tikzcd}
\Spec A \times_k \ljet{r}{m} \ar[r] \ar[d]        &X \ar[d]      \\
\Spec A \ar[r]        &S.
\end{tikzcd}\right\}\]
If $m = \infty$, one defines relative log arcs $\ljsp{r}{X/S, \infty}$ by completing 
\[\Spec A \hat{\times}_k \ljet{r}{\infty} = \Spec A \adj{t}.\]

Working relative to a base is important in log geometry. Log smooth curves can attain singularities only over a base with nontrivial log structure for example. We prove analogues of our results in the relative setting and conclude with examples.

Any point $\Spec A \to S$ can be thought of as a constant jet 
\[\Spec A[t]/t^m \to \Spec A \to S,\]
giving a map $S \to \ljsp{r}{S, m}$. Relative log jets are the fs fiber product
\[\ljsp{r}{X/S, m} = \ljsp{r}{X, m} \times^\ell_{\ljsp{r}{S, m}} S.\]
In particular, relative log jets are representable by a log algebraic space. If $S = \Spec k$ is a point with trivial log structure, $\ljsp{r}{S, m} = S$ and $\ljsp{r}{X/S, m} = \ljsp{r}{X, m}$. 

Given a pair of maps $X \to Y \to Z$, there is an fs pullback diagram
\[\begin{tikzcd}
\ljsp{r}{X/Y, m} \ar[r] \ar[d] \lpb        &\ljsp{r}{X/Z, m} \ar[d]       \\
Y \ar[r]       &\ljsp{r}{Y/Z, m}.
\end{tikzcd}\]    

For $m=1$, the relative jet spaces are relative tangent bundles and this pullback is dual to the exact sequence of K\"ahler differentials
\[\lkah{Y/Z}|_X \to \lkah{X/Z} \to \lkah{X/Y} \to 0.\]
Exactness on the left results from smoothness of $X \to Y$, and this guarantees surjectivity of $\ljsp{r}{X/Z, m} \to \ljsp{r}{Y/Z, m}|_X$ more generally. 

For $m=0$, this gives a ``relative evaluation space'' $\ev_S X$ obtained by pulling back $\ev X \to \ev S$ along the inclusion $S \subseteq \ev S$ of the contact order-zero component
\[\begin{tikzcd}
\ev_S X \ar[r] \ar[d] \lpbstrict         &\ev X \ar[d]      \\
S \ar[r]       &\ev S.
\end{tikzcd}\]

\begin{proposition}

If $X \to S$ is log smooth of relative dimension $n$, the truncation maps $\ljsp{r}{X/S, m+1} \to \ljsp{r}{X/S, m}$
are $n$-dimensional affine bundles. 

\end{proposition}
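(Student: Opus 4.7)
The plan is to mimic the proof of the absolute case (the corollary following Lemma \ref{lem:logsmaffinebund}) by reducing, strict-\'etale locally on $X$, to the product $X = S \times \Aff^n$ equipped with the projection to $S$. Since $X \to S$ is log smooth of relative dimension $n$, by \cite[Theorem IV.3.2.6]{ogusloggeom} we may work strict-\'etale locally on $X$ and factor $X \to S$ as $X \to S \times \Aff^n \to S$ where the first map is log \'etale and the second is the projection.

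With this reduction in hand, the proof splits into two pieces: first handle the model case $X = S \times \Aff^n$, then use the log \'etale base change to propagate the statement to the general log smooth $X \to S$. For the model case, Remark \ref{rmk:logjetspacepb} gives
\[\ljsp{r}{S \times \Aff^n, m} \;=\; \ljsp{r}{S, m} \times^\ell \ljsp{r}{\Aff^n, m},\]
and taking fs fiber product with $S \hookrightarrow \ljsp{r}{S, m}$ (the constant jet section) over $\ljsp{r}{S, m}$ collapses the $S$-factor, yielding
\[\ljsp{r}{(S\times\Aff^n)/S, m} \;=\; S \times^\ell \ljsp{r}{\Aff^n, m}.\]
The truncation map between consecutive $m$'s is then just the base change along $S \to \Spec k$ of the absolute truncation $\ljsp{r}{\Aff^n, m+1} \to \ljsp{r}{\Aff^n, m}$, which Lemma \ref{lem:logsmaffinebund} identifies as a strict $(\Aff^n)^\circ$-torsor.

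To push this from $S \times \Aff^n$ back to $X$, I would take the pullback square of Lemma \ref{lem:letpb} for the log \'etale map $X \to S \times \Aff^n$ and form the fs fiber product of every corner with $S$ over $\ljsp{r}{S, m+1}$ (respectively $\ljsp{r}{S, m}$). Since fs fiber products commute, the resulting square of relative log jets
\[\begin{tikzcd}
\ljsp{r}{X/S, m+1} \ar[r] \ar[d] \lpb  &\ljsp{r}{(S\times\Aff^n)/S, m+1} \ar[d]  \\
\ljsp{r}{X/S, m} \ar[r]   &\ljsp{r}{(S\times\Aff^n)/S, m}
\end{tikzcd}\]
is still cartesian in fs log schemes, so the left vertical map inherits the strict $(\Aff^n)^\circ$-torsor structure from the right. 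Descent then gives an $n$-dimensional strict affine bundle globally.

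The main obstacle is bookkeeping: one must check that the log \'etale pullback square of Lemma \ref{lem:letpb} is genuinely preserved under taking the relative version (i.e.\ under the further fs base change by $S \to \ljsp{r}{S, m}$), and that ``strict affine bundle'' is stable under such fs base change. Both are formal once one is careful that all the fiber products are in fs log schemes and that the section $S \to \ljsp{r}{S, m}$ giving rise to the relative jet construction is strict.
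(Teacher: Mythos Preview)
Your proposal is correct and follows essentially the same approach as the paper. The paper omits the proof of this proposition, but the intended argument is precisely the relative analogue of the absolute corollary following Lemma~\ref{lem:logsmaffinebund}: localize via Ogus's chart theorem to a log \'etale map $X \to S \times \Aff^n$, use Lemma~\ref{lem:letpb} to pull back the truncation square, and invoke Lemma~\ref{lem:logsmaffinebund} for the model case. Your bookkeeping remarks about compatibility of the fs base change $S \to \ljsp{r}{S,m}$ with the log \'etale pullback square are accurate and exactly the points one must verify.
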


\begin{proposition}[Relative Bhatt's theorem]\label{prop:relativebhatt}
If $X \to S$ is a morphism of fs log schemes, the relative arc space is the limit of the relative jet spaces 
\[\ljsp{r}{X/S, \infty} = \lim_m \ljsp{r}{X/S, m}.\]
\end{proposition}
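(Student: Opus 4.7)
The plan is to reduce the relative statement to the absolute Bhatt's theorem (Proposition \ref{prop:bhattthm}) by exhibiting both sides as the same fs fiber product and then commuting the two limits.

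First, by the definition just recalled, for every $m \in \NN \cup \{\infty\}$ there is a cartesian square of fs log algebraic spaces
\[\begin{tikzcd}
\ljsp{r}{X/S, m} \ar[r] \ar[d] \lpb & \ljsp{r}{X, m} \ar[d] \\
S \ar[r] & \ljsp{r}{S, m},
\end{tikzcd}\]
where the bottom map is the ``constant jet'' inclusion coming from the map $S \to S \times \ljet{r}{m}$. For the $m = \infty$ case one interprets the product as the formal one, compatibly with the definition of $\ljsp{r}{X/S,\infty}$. These squares are compatible with the truncation maps, so the constant jet map $S \to \ljsp{r}{S, \infty}$ is itself the inverse limit over $m$ of $S \to \ljsp{r}{S, m}$.

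Second, I would invoke Bhatt's theorem (Proposition \ref{prop:bhattthm}) applied to both $X$ and $S$ to obtain
\[\lim_m \ljsp{r}{X, m} = \ljsp{r}{X, \infty}, \qquad \lim_m \ljsp{r}{S, m} = \ljsp{r}{S, \infty}.\]
Since inverse limits commute with inverse limits -- and in particular fs fiber products, which are themselves limits in the category of fs log algebraic spaces, commute with the cofiltered inverse limits over $m$ -- taking $\lim_m$ of the cartesian squares above yields
\[\lim_m \ljsp{r}{X/S, m} = \left(\lim_m \ljsp{r}{X, m}\right) \times^\ell_{\lim_m \ljsp{r}{S, m}} S = \ljsp{r}{X, \infty} \times^\ell_{\ljsp{r}{S, \infty}} S = \ljsp{r}{X/S, \infty}.\]

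The main subtlety is that ``$\lim$ commutes with $\times^\ell$'' must be justified in the relevant category of (fs) log algebraic spaces. This is purely formal: fs pullback is the categorical pullback in fs log algebraic spaces, and both it and the cofiltered inverse limit over $m$ are examples of limits in this category, so they commute. A minor sanity check that I would include: functoriality of $\ljsp{r}{-,m}$ in the target (already used repeatedly above) ensures the constant-jet maps $S \to \ljsp{r}{S, m}$ indeed assemble into a map of inverse systems whose limit is $S \to \ljsp{r}{S, \infty}$, since on $T$-points the constant jet $T \to S \to \ljsp{r}{S, \infty}$ truncates to the constant jet $T \to S \to \ljsp{r}{S, m}$ for every $m$.
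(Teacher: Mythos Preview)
The paper states this proposition without proof; it is evidently meant to follow from the absolute case (Proposition \ref{prop:bhattthm}) together with the description of relative log jets as the fs fiber product $\ljsp{r}{X/S, m} = \ljsp{r}{X, m} \times^\ell_{\ljsp{r}{S, m}} S$ given just before. Your argument is exactly this intended deduction, and it is correct: all objects involved are functors on fs log schemes, limits of such functors are computed pointwise, so the cofiltered limit over $m$ commutes with the fs fiber product automatically. Your remark about the constant-jet maps assembling compatibly is the right sanity check.
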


\begin{example}[Thanks to Tommaso de Fernex]\label{ex:nodalcurve}

Let $Y$ be the affine log scheme
\[Y \coloneqq \Spec k[u, v, t]/(uv - t)\]
endowed with the log structure associated to the monoid
\[\NN\num{U, V, T}/(U + V = T)\]
generated by $U, V, T$, with the map sending $U \mapsto u$, etc. The map $Y \to \Aff^1$ projecting onto $t$ is log smooth. This is the prototypical node -- all log smooth curves $C \to S$ over a strictly henselian base $S$ are \'etale locally either smooth or pulled back from $Y \to \Aff^1$ \cite[pg. 222]{fkatologdefthy}. 

Because $Y \to S$ is log smooth, the truncation maps for the relative log jet spaces $\ljsp{r}{Y/S, m}$ are $\Aff^1$-bundles. This is true even for the fiber $Y_0 \to 0$ over the origin $0 \in \Aff^1$. This differs drastically from both the absolute log jets $\ljsp{r}{Y, m}$ and the ordinary jet space $\jsp{Y^\circ, m}$. 

Write $X = Y_0$ for the fiber over the origin, $X = \Spec k[u, v]/uv$. The ordinary jet space $\jsp{X^\circ, m}$ of the underyling scheme $X^\circ$ splits into $m+1$ irreducible components, one for each partition $m = i + j$. The $(i, j)$-component corresponds to $i$-jets along the component $u=0$ and $j$-jets along the $v=0$ component. Writing $u_0, u_1, \cdots, v_0, v_1, \cdots$ for Hasse-Schmidt derivatives of $u_0 = u, v_0 = v$ \cite{vojtajetspaces}, the $(i, j)$-component is cut out by the equations
\[u_0 = u_1 = \cdots = u_{i-1} = v_0 = \cdots v_{j-1}.\]
These are intersections of coordinate hyperplanes in the jet space $\jsp{\Aff^2, m}$. 

The strict inclusion $X \subseteq \Aff^2$ as the axes of $\Aff^2$ describes the absolute log jets as a pullback
\[\ljsp{r}{X, m} = \jsp{X, m} \times_{\jsp{\Aff^2, m}} \ljsp{r}{\Aff^2, m}. \]

Any family of nodal curves $C \to S$ can be canonically made into a log smooth family via placing this log structure at the nodes. Then this example describes the relative log jets $\ljsp{r}{C/S, m}$ strict-\'etale locally around the nodes and the case of $\Aff^1$ describes the smooth points. 

\end{example}

\begin{remark}

The ordinary relative jet space of the map $X \to \Log$ is another viable candidate for ``log jet space.'' Many notions in log geometry are simply relative notions for the map $X \to \Log$ or $X \to \af[X]$. This is a component of our notion $\ljsp{r}{X, m}$. 

The square 
\[
\begin{tikzcd}
\ljsp{r}{X, m} \ar[r] \ar[d] \lpbstrict      &\ev X \ar[d]      \\
\jsp{X^\circ, m} \ar[r]        &\jsp{\af[X]^\circ, m}
\end{tikzcd}
\]
of Remark \ref{rmk:logstackpbsquare} and the factorization of the trivial jets $\af[X] \subseteq \ev \af[X] \to \jsp{\af[x]^\circ, m}$ through the evaluation stack give a pullback square defining the relative log jets of $X \to \af[X]$
\[
\begin{tikzcd}
\jsp{X/\af[X], m} \ar[d] \ar[r] \lpbstrict &\ljsp{r}{X, m} \ar[r] \ar[d] \lpbstrict         &\jsp{X^\circ, m} \ar[d]       \\
\af[X] \ar[r]      &\ev \af[X] \ar[r]         &\jsp{\af[X]^\circ, m}.
\end{tikzcd}
\]
We see that $\jsp{X/\af[x], m} \to \ljsp{r}{X, m}$ is a union of components. The same goes for $X \to \Log$. 

\end{remark}

\section{The log tangent space of log jet spaces}\label{s:logtansplogjetsp}

\subsection{Ordinary jet spaces $m \neq \infty$}

We give another proof of the formula of \cite{arcspacedifflsdefernexdocampo} for the K\"ahler differentials of jet spaces $\jsp{X, m}$ for $m \neq \infty$, generalizing it to Weil restrictions.

\begin{situation}\label{sit:finitefree}
Let $S' \to S$ be a Gorenstein, finite, locally free morphism of constant degree $m \geq 1$ with $S$ locally noetherian. Consider $Y \to S'$ and suppose either that $S' \to S$ is the universal jet $\Spec \ZZ\adj{t}_m \to \Spec \ZZ$ or that $Y \to S'$ is quasiprojective. 
\end{situation}

Here $S$ is an arbitrary scheme over $\Spec \ZZ$; we need not work over $k$. The hypothesis of $X$ quasiprojective over $S$ is only necessary for representability in general. It is not necessary for jet spaces or for our results to apply. 

We freely omit various subscripts without risk of confusion. Our main example is $S' = \Spec k \adj{t}_m$, $S = \Spec k$. The reader may restrict to this example. In Situation \ref{sit:finitefree}, the Weil restriction 
\begin{equation}\label{eqn:weilrestn}
\WR_{S'/S} Y'(T) \coloneqq 
\left\{
\begin{tikzcd}
T \times_S S' \ar[rr, dashed] \ar[dr] & &Y \ar[dl]      \\
    &S'
\end{tikzcd}
\right\}    
\end{equation}
is representable by a scheme over $S$. This is the sheaf-theoretic pushforward of $Y \to S'$ along $S' \to S$. 

The map $\pi$ also admits a dualizing object \cite[0FKW]{sta}
\[\omega = \omega_\pi = \HHom_{\OO_S-mod}(\OO_{S'}, \OO_S).\]
The Gorenstein hypothesis ensures $\omega$ is an invertible sheaf \cite[0C08]{sta}. If $S', S$ are smooth for example, the map $\pi$ is a local complete intersection and hence Gorenstein. For jet spaces, $\omega$ is globally trivial. 

For most Weil restrictions of interest, the target $Y \to S'$ is pulled back from some $X \to S$.

\begin{situation}\label{sit:weilrestnmonogencase}

Suppose $X \to S$ is quasiprojective or $S' \to S$ is the universal jet. Define $Y \to S'$ as the pullback of $X \to S$ to $S'$. 

\end{situation}

\begin{remark}

In Situation \ref{sit:weilrestnmonogencase}, the functor of points \eqref{eqn:weilrestn} of the Weil restriction can then be written as 
\begin{equation}\label{eqn:weilrestnmonogencase}
\WR_{S'/S} (X \times_S S') =
\left\{\begin{tikzcd}
T \times_S S' \ar[rr, dashed, "s"] \ar[dr]       &   &X \times_S T \ar[dl]      \\
    &T
\end{tikzcd}\right\}.    
\end{equation}
In this case, there is a canonical open subscheme $\Gen_{S'/S} X \subseteq \WR_{S'/S} X \times_S S'$ on which the map $s$ in \eqref{eqn:weilrestnmonogencase} is a closed immersion \cite{mymonogeneity1}. The K\"ahler differentials of this open subscheme are pulled back from the ambient Weil restriction.

\end{remark}

\begin{example}

Jet spaces arise from Situation \ref{sit:weilrestnmonogencase}, with $S' \to S$ the universal jet $\Spec \ZZ \adj{t}_m \to \Spec \ZZ$ and $Y = X \times_S S'$ for a target $X$. For $m = 1$, $\WR_{S'/S} X \times_S S' = \jsp{X, 1} = T_X$ is the tangent space and the open subscheme $\Gen {X} \subseteq T_X$ is the complement of the zero section. 

\end{example}

Here are some more examples of Situations \ref{sit:finitefree} and \ref{sit:weilrestnmonogencase}:

\begin{example}

Let $S' = \Spec \CC^m$ and $S = \CC$, with the diagonal map $\CC \to \CC^m$ betwen them. The space $\WR_{S'/S} X \times_S S' = X^n$ parameterizes $m$ points in $X$. The subscheme $\Gen_{S'/S} X \times_S S' = {\rm Conf}_m(X)$ is the configuration space of points in $X$ where the points are not allowed to collide. 

\end{example}

\begin{example}

Let $S' = \OO_L$ and $S = \OO_K$ be an extension of number fields and take $X = \Aff^1$. Then $\WR_{S'/S} \Aff^1_{S'}$ evaluated on an $\OO_K$-algebra $C$ yields choices of elements $\theta \in C \otimes_{\OO_K} \OO_L$. The subscheme $\Gen_{S'/S} \Aff^1$ restricts to those elements $\theta$ which generate $C \otimes_{\OO_K} \OO_L$ as a $C$-algebra. Global sections over $\OO_K$ are \emph{monogenerators} for $\OO_L/\OO_K$, i.e. elements which generate $\OO_L$ as a ring over $\OO_K$. The classical \emph{Hasse problem} in number theory asks whether $S'/S$ admits monogenerators and how to determine them. 

\end{example}

We prove a warm-up lemma concerning functoriality of the Weil restriction.

\begin{lemma}

If $f : X \to Y$ is a monomorphism or closed immersion of $S'$-schemes, so is the induced map $f_* : \WR X \to \WR Y$ on Weil restrictions.

\end{lemma}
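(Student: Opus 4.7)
The monomorphism case is formal. Since $\WR_{S'/S} Z(T) = \Hom_{S'}(T \times_S S', Z)$ by definition, the induced map $f_* : \WR X \to \WR Y$ on $T$-points is post-composition with $f$. If $f$ is a monomorphism, then post-composition is injective on each $\Hom$-set, so $f_*$ is a monomorphism of functors and hence of schemes.

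For the closed immersion case, I would exhibit $\WR X$ as the vanishing locus of a quasi-coherent ideal on $\WR Y$. Let $I \subseteq \OO_Y$ be the ideal of $X$, and let $\Psi : \WR Y \times_S S' \to Y$ be the universal morphism classifying the identity in $\WR Y(\WR Y)$. Set $J \coloneqq \Psi^{-1}(I) \cdot \OO_{\WR Y \times_S S'}$. By the functor-of-points description, a morphism $T \to \WR Y$ factors through $\WR X$ if and only if the pullback of $J$ to $T \times_S S'$ vanishes.

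To convert this into a closed condition on $T$, push forward along the projection $\pi : \WR Y \times_S S' \to \WR Y$. Since $S' \to S$ is finite locally free, so is $\pi$; hence $\pi_* \OO_{\WR Y \times_S S'}$ is a locally free $\OO_{\WR Y}$-module of finite rank and $\pi_*$ commutes with arbitrary base change. Define $\WR I \subseteq \OO_{\WR Y}$ to be the image of the pairing
\[
(\pi_* \OO_{\WR Y \times_S S'})^\vee \otimes \pi_* J \longrightarrow \OO_{\WR Y};
\]
equivalently, after locally trivializing $\pi_* \OO_{\WR Y \times_S S'}$, the ideal $\WR I$ is generated by the coordinates of local generators of $\pi_* J$ (independent of the chosen basis). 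By flat base change, $\WR I \cdot \OO_T = 0$ iff $\pi_* J \otimes_{\OO_{\WR Y}} \OO_T = 0$ iff $J|_{T \times_S S'} = 0$. Thus the closed subscheme $V(\WR I) \subseteq \WR Y$ represents the same functor as $\WR X$, so $\WR X \hookrightarrow \WR Y$ is the closed immersion cut out by $\WR I$.

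The main technical point is the compatibility of $\pi_*$ with arbitrary base change, which is why I emphasize that $\pi$ is finite locally free rather than only affine. Everything else is bookkeeping with the universal morphism $\Psi$ and the local triviality of $\pi_* \OO_{\WR Y \times_S S'}$.
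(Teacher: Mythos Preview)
Your argument is correct and takes a genuinely different route from the paper's proof. For the closed immersion case, the paper localizes to the affine situation, characterizes a closed immersion as a proper monomorphism, and checks the valuative criterion of properness directly: given a DVR $R$ with fraction field $K$, the injectivity of $R \otimes_C D \hookrightarrow K \otimes_C D$ (by flatness of $S' \to S$) together with the surjectivity of $A \to B$ forces the required lift to exist uniquely. Your approach instead constructs the defining ideal of $\WR X$ inside $\WR Y$ explicitly, by pushing forward the pullback of $I$ along the finite locally free projection and reading off coordinates.

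Your route is the more standard one (it is essentially how this is done in Bosch--L\"utkebohmert--Raynaud, \emph{N\'eron Models}, \S 7.6) and has the advantage of producing the actual ideal sheaf, while also not implicitly relying on any finite-type hypothesis needed to pass from the valuative criterion to properness. The paper's argument is shorter once one accepts that $\WR X \to \WR Y$ is already of finite type in Situation~\ref{sit:finitefree}, which holds in the quasiprojective and jet-space cases at hand. One small imprecision in your write-up: the middle condition ``$\pi_* J \otimes_{\OO_{\WR Y}} \OO_T = 0$'' should really be ``the image of $\pi_* J$ in $(\pi_* \OO_{\WR Y \times_S S'}) \otimes \OO_T$ vanishes,'' since the tensor product itself could have nonzero kernel; but your local description in terms of coordinates of generators makes clear what you mean, and the conclusion is unaffected.
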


\begin{proof}

The case of monomorphisms results from the functor of points. 

For closed immersions, the claim is \'etale local in $Y$ and we can assume $Y = \Spec A$ and $X = \Spec B$ and the map $A \to B$ is surjective. We can also localize in $S$ to assume $S' \to S$ comes from a finite free algebra $C \to D$.

We need to show $\WR X \to \WR Y$ is proper. Let $R \subseteq K$ be a discrete valuation ring in its fraction field. We need to find a unique dashed lift
\[\begin{tikzcd}
\Spec K \times_S S' \ar[r] \ar[d]         &X  \ar[d]    &       &K \otimes_C D      &B \ar[l] \ar[dl, dashed]      \\
\Spec R \times_S S' \ar[r] \ar[ur, dashed]         &Y       &       &R \otimes_C D \ar[u, hook]      &A. \ar[u] \ar[l]
\end{tikzcd}\]
The map $R \otimes_C D \to K \otimes_C D$ remains injective by the flatness of $C \to D$. The surjection $A \to B$ can detect whether elements of $B$ map into the subring $R \otimes_C D$, so we are done.

\end{proof}

\begin{example}

If $f : X \to Y$ is proper, $f_* : \WR {X} \to \WR {Y}$ need not be. Even for $\PP^n \to \pt$, the jet spaces of $\PP^n$ are affine bundles over $\PP^n$ and thus not proper unless $m = 0$. 

\end{example}

We need to understand what the Weil restriction does to vector bundles. If $F$ is a sheaf of modules on $S'$, we can form 
\[\VV(F) = \Spec \Sym F.\]
The Weil restriction of $\VV(F)$ does \emph{not} coincide with the vector bundle associated to the pushforward $\pi_* F$! Here is the correct formula.

\begin{lemma}\label{lem:weilrestnvb}

The Weil restriction of a scheme $\VV(F)$ along $S' \to S$ is computed using the dualizing sheaf $\omega$:
\begin{equation}\label{eqn:weilrestnvb}
\WR_{S'/S} \VV(F) = \VV(\pi_*(F \otimes \omega)).    
\end{equation}
\end{lemma}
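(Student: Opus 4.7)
The plan is to compute both sides by the functor of points and compare them using Grothendieck duality for the finite locally free Gorenstein morphism $\pi : S' \to S$. For an $S$-scheme $T$, write $p : T \to S$ and let $q : T \times_S S' \to S'$ denote the base change of $\pi$ across $p$.

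First, by the defining property of $\VV(-) = \SSpec{} \Sym(-)$, I would unwind the functor of points to
\[
\WR_{S'/S} \VV(F)(T) = \Hom_{S'}(T \times_S S', \VV(F)) = \Hom_{\OO_{S'}}(F, q_* \OO_{T \times_S S'}),
\]
and similarly $\VV(\pi_*(F \otimes \omega))(T) = \Hom_{\OO_S}(\pi_*(F \otimes \omega), p_* \OO_T)$. The task then reduces to producing a natural isomorphism between these Hom groups.

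Next, I would invoke the fact that for $\pi$ finite locally free with $\omega = \HHom_{\OO_S}(\pi_* \OO_{S'}, \OO_S)$ invertible, the right adjoint $\pi^!$ of $\pi_*$ on quasi-coherent sheaves is computed by $\pi^! G = \omega \otimes_{\OO_{S'}} \pi^* G$. Applied to $G = p_* \OO_T$, duality together with the invertibility of $\omega$ yields
\[
\Hom_{\OO_S}(\pi_*(F \otimes \omega), p_* \OO_T) \;=\; \Hom_{\OO_{S'}}(F \otimes \omega, \omega \otimes \pi^* p_* \OO_T) \;=\; \Hom_{\OO_{S'}}(F, \pi^* p_* \OO_T).
\]
Flat base change across the cartesian square identifies $\pi^* p_* \OO_T = q_* \OO_{T \times_S S'}$, matching the computation of the LHS. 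Yoneda then closes the argument.

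The main obstacle is setting up the duality isomorphism correctly. If one prefers to avoid invoking $\pi^!$, the affine-local argument is essentially elementary: writing $S = \Spec R$, $S' = \Spec C$, $F \leftrightarrow N$, $\omega \leftrightarrow C^\vee := \Hom_R(C, R)$, and $T = \Spec A$, the claim reduces to a natural identification
\[
\Hom_C(N, A \otimes_R C) = \Hom_R(N \otimes_C C^\vee, A).
\]
Tensor-hom adjunction rewrites the right side as $\Hom_C(N, \Hom_R(C^\vee, A))$, reducing everything to the reflexivity identity $\Hom_R(C^\vee, A) = A \otimes_R C$, which holds because $C$ is finitely generated projective over $R$. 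Thus the sole substantive input is finite locally freeness of $\pi$ (to guarantee both reflexivity and flat base change) plus Gorensteinness (to make $\omega$ invertible so that tensoring with it is cancellable).
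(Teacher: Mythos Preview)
Your proof is correct and follows essentially the same route as the paper: compute both sides on $T$-points and pass between them via the adjunction $\pi_* \dashv \pi^!$ with $\pi^!(-) \simeq \omega \otimes \pi^*(-)$, exactly as in the paper's chain $\Hom(F,\OO_T|_{S'}) \overset{\otimes\omega}{\simeq} \Hom(F\otimes\omega,\pi^!\OO_T) = \Hom(\pi_*(F\otimes\omega),\OO_T)$. Your version is simply more explicit about the flat base change hidden in the paper's notation $\OO_T|_{S'}$, and your affine-local alternative is a nice supplement but not a genuinely different argument.
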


\begin{proof}

Check this identity on functors of points.
\begin{equation}\label{eqn:pfwdadjoint}
\begin{split}
\left(\WR_{S'/S} \VV(F)\right)(T) &= \VV(F)(T\times_S S')         \\
        &= \Hom(F, \OO_T|_{S'})       \\
        &\overset{\otimes \omega}{\longsimeq} \Hom(F \otimes \omega, \pi^! \OO_T)       \\
        &=\Hom(\pi_* (F \otimes \omega), \OO_T)         \\
        &=\VV(\pi_*(F \otimes \omega))(T).
\end{split}    
\end{equation}
See \cite[0BUL]{sta} for the connection of $\omega$ to duality $\pi^!$ used here. 

\end{proof}

The pullback $\cal U$ of $S' \to S$ along $\WR X \to S$ has a universal map to $X$
\[\begin{tikzcd}
\cal U \ar[d, "\rho", swap] \ar[r, "f"]     &X \ar[d]      \\
\WR {X} \ar[r]     &S.
\end{tikzcd}\]

\begin{lemma}\label{lem:jetsptansp}
The relative tangent bundle $T_{\WR {X}/S}$ to the Weil restriction is the Weil restriction of the tangent bundle $T_{X/S}|_\cal U$ along $\rho$
\[T_{\WR_{X}/S} \simeq \WR_{\cal U/\WR X} \left(T_{X/S}|_{\cal U}\right).\]
The $k$th relative jet space of the $m$th jet space satisfies the same relation
\[\jsp{\WR {X}/S, k} \simeq \WR_{\cal U/\WR_{X}} \left(\jsp{X/S, k}|_{\cal U}\right).\]
\end{lemma}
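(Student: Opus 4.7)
The strategy is to verify the isomorphism on functors of points by repeated application of the Weil restriction adjunction. Both sides parameterize the same data: a $T$-point $a \colon T \to \WR X$ together with a $k$-jet extension of the induced map $\bar a \colon {\cal U}_T \to X$, where ${\cal U}_T := T \times_{\WR X} {\cal U} = T \times_S S'$ (the two descriptions agree because ${\cal U} = \WR X \times_S S'$).

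First I would unwind the left-hand side. For any test scheme $T \to S$, a $T$-point of $\jsp{\WR X/S, k}$ is a morphism $T \times \jet{k} \to \WR X$ over $S$. By the adjunction defining $\WR_{S'/S}$, this is equivalent to a morphism
\[
(T \times \jet{k}) \times_S S' \longrightarrow X
\]
over $S'$. Using the canonical identification $(T \times \jet{k}) \times_S S' \simeq (T \times_S S') \times \jet{k}$, the datum becomes a morphism ${\cal U}_T \times \jet{k} \to X$, whose restriction to the $0$-jet recovers the underlying $\bar a$.

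Next I would unwind the right-hand side analogously. A $T$-point of $\WR_{{\cal U}/\WR X}(\jsp{X/S,k}|_{\cal U})$ lying over $a \colon T \to \WR X$ is, by definition, a ${\cal U}$-morphism ${\cal U}_T \to \jsp{X/S,k}|_{\cal U} = \jsp{X/S,k} \times_X {\cal U}$. Since the composite ${\cal U}_T \to {\cal U} \to X$ agrees with $\bar a$, this is equivalent to an $X$-morphism ${\cal U}_T \to \jsp{X/S, k}$ lifting $\bar a$, which by the functor of points of the relative $k$-jet space is precisely a morphism ${\cal U}_T \times \jet{k} \to X$ extending $\bar a$. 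The two sides therefore match naturally in $T$, and the tangent bundle claim is the case $k = 1$ (replace $\jet{k}$ by $\Spec \ZZ[\epsilon]/\epsilon^2$); the proof is formally identical.

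The only point requiring comment beyond this adjunction chase is representability of the Weil restriction on the right-hand side. This follows by reapplying Situation \ref{sit:finitefree} to the base change ${\cal U} \to \WR X$ of $S' \to S$, which remains Gorenstein, finite, and locally free of the same degree, together with the inheritance of quasi-projectivity (or the universal-jet hypothesis) for $\jsp{X/S,k}|_{\cal U} \to {\cal U}$. I do not expect a genuine obstacle: the lemma simply expresses that Weil restriction commutes with ``Hom out of $\jet{k}$,'' both operations being instances of pushforward along different maps that can be composed in either order.
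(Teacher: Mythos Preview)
Your proof is correct and follows essentially the same approach as the paper: both arguments unwind the functor of points via the Weil restriction adjunction, identifying a $T$-point on either side with a map ${\cal U}_T \times \jet{k} \to X$ over $S$ extending the universal map. The paper condenses this into two diagrams and the remark that the tangent bundle is the case $k=1$; your version spells out the two sides separately and adds a comment on representability, but the content is the same.
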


The latter formula more generally holds for compositions of distinct Weil restrictions. 

\begin{proof}

The tangent bundle is the case $k = 1$. Unpack the functor of points on an $S$-scheme $T$, writing $T' = T \times_S S' = T \times_{\WR X} \cal U$
\[
\jsp{\WR X/S, m}(T) \coloneqq 
\left\{
\begin{tikzcd}
T \ar[r] \ar[d]       &\WR X \ar[d]      \\
T \times \jet{m} \ar[r] \ar[ur, dashed]        &S
\end{tikzcd}    
\right\}.
\]
Rewrite this diagram. 
\[
\begin{tikzcd}
T' \ar[r] \ar[d]      &\cal U \ar[r]  &X \ar[d]      \\
T' \times \jet{m} \ar[r] \ar[urr, dashed]   &T \ar[r]      &S.
\end{tikzcd}
\]
But these are sections of $\jsp{X/S}|_{\cal U}$ over $T'$, or sections of the Weil restriction $\WR_{S'/S} \jsp{X/S}|_{\cal U}$ over $T$. 

\end{proof}

If $S'/S$ is the universal jet $\Spec k \adj{t}_m/\Spec k$, then $T \times_{\jsp{X, m}} \cal U = T \times \jet{m}$. The final diagram in the proof is then 
\[\begin{tikzcd}
T \times \jet{m} \ar[r] \ar[d]        &\cal U \ar[r]       &X      \\
T \times \jet{m} \times \jet{k} \ar[urr, dashed, bend right]
\end{tikzcd}.\]

\begin{corollary}[{\cite[Theorem B]{arcspacedifflsdefernexdocampo}}]\label{cor:jetspdiffls}
The K\"ahler differentials of the Weil restriction $\WR {X}$ over $S$ are
\[\kah{\WR {X}/S} \simeq \rho_*(\kah{X/S}|_\cal U \otimes \omega).\]
\end{corollary}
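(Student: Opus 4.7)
The plan is to take the tangent bundle formula of Lemma \ref{lem:jetsptansp} ($k=1$ case), translate it into a statement about associated sheaves via the $\VV(-)$ construction, and then apply the Weil-restriction-of-vector-bundles formula of Lemma \ref{lem:weilrestnvb} to identify the sheaves themselves.

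First, I would recall that for any $S$-scheme $Z$, the relative tangent bundle is the affine scheme over $Z$ associated to the sheaf of Kähler differentials: $T_{Z/S} = \VV(\kah{Z/S})$. Applying this to the Weil restriction and to $X$, Lemma \ref{lem:jetsptansp} with $k=1$ becomes an isomorphism of schemes over $\WR X$
\[\VV\!\left(\kah{\WR X/S}\right) \simeq \WR_{\cal U/\WR X}\left(\VV(\kah{X/S}|_{\cal U})\right).\]
The map $\rho : \cal U \to \WR X$ is the base change of $\pi : S' \to S$ along $\WR X \to S$, so it satisfies the hypotheses of Situation \ref{sit:finitefree}, and its dualizing sheaf is $\rho^*\omega$; I will continue to write $\omega$ for this pullback.

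Next, applying Lemma \ref{lem:weilrestnvb} to $F = \kah{X/S}|_{\cal U}$ along the finite locally free Gorenstein map $\rho$ gives
\[\WR_{\cal U/\WR X}\left(\VV(\kah{X/S}|_{\cal U})\right) = \VV\!\left(\rho_*(\kah{X/S}|_{\cal U} \otimes \omega)\right).\]
Combining the two displays yields an isomorphism of schemes
\[\VV\!\left(\kah{\WR X/S}\right) \simeq \VV\!\left(\rho_*(\kah{X/S}|_{\cal U} \otimes \omega)\right)\]
over $\WR X$. Since $\VV(-) = \SSpec{} \Sym(-)$ is fully faithful on quasi-coherent sheaves (recovery of a quasi-coherent sheaf from its total space is standard), this isomorphism of schemes descends to the desired isomorphism
\[\kah{\WR X/S} \simeq \rho_*(\kah{X/S}|_{\cal U} \otimes \omega)\]
of $\OO_{\WR X}$-modules.

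The only place that needs care is the compatibility of the dualizing sheaf with the base change $\WR X \to S$, i.e.\ that $\omega_\rho = \rho^* \omega_\pi$; this follows from the fact that for finite locally free morphisms the formation of $\HHom_{\OO-\mathrm{mod}}(\OO_{S'}, \OO_S)$ commutes with arbitrary base change (\cite[0FKW]{sta}). Beyond that point, the proof is mechanical: the real content is already packaged into Lemmas \ref{lem:weilrestnvb} and \ref{lem:jetsptansp}, and this corollary is the cotangent dual of the latter, obtained by inserting the twist by $\omega$ that Lemma \ref{lem:weilrestnvb} forces into the Weil restriction of a vector bundle.
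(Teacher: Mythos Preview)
Your proposal is correct and is exactly the approach the paper takes: the paper's own proof is the single sentence ``Write Lemma \ref{lem:jetsptansp} using Lemma \ref{lem:weilrestnvb},'' and you have simply spelled this out. One small wording issue: $\VV(-)$ is not literally fully faithful as a functor to schemes over the base (think of translations of $\Aff^1$), but the isomorphisms produced by Lemmas \ref{lem:weilrestnvb} and \ref{lem:jetsptansp} are visibly $\OO$-linear on $T$-points, so they do descend to the underlying sheaves; you might phrase this as recovering the sheaf from the associated cone (with its $\GG_m$-action) rather than invoking full faithfulness.
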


\begin{proof}

Write Lemma \ref{lem:jetsptansp} using Lemma \eqref{lem:weilrestnvb}. 

\end{proof}

This computes the differentials of the jet space as well as the configuration space ${\rm Conf}_m(\Aff^1)$ of points in $\CC$ and the scheme of monogenerators $\Gen_{\OO_L/\OO_K} \Aff^1$ in one fell swoop. For $X = \Aff^1$, each space is smooth.

\begin{remark}

See the forthcoming work of C. Eric Overton-Walker for a version of Corollary \ref{cor:jetspdiffls} in derived geometry. 

A sibling of Lemma \ref{lem:weilrestnvb} appears in the proof of \cite[Proposition 19.1.1.2]{luriesag}. Likewise Corollary \ref{cor:jetspdiffls} is similar to Proposition 19.1.4.1 of loc. cit. 

\end{remark}

\subsection{Ordinary arc spaces $m = \infty$}

The log version of Bhatt's theorem in Proposition \ref{prop:bhattthm} writes the arc space $\ljsp{r}{X, \infty}$ as a limit of jet spaces under the truncation maps. We can understand the tangent space of $\ljsp{r}{X, \infty}$ as a limit. Write 
\[\omega_\infty = \colim_m \omega_m|_{\ljet{r}{\infty}}\]
for the colimit of the dualizing sheaves of $\Spec \ZZ \adj{t}_m \to \Spec \ZZ$. Each $\omega_m$ can be trivialized over $\ljet{r}{m}$, but the colimit is different.

The sheaf $\omega_m = \HHom_{\ZZ-mod}(\ZZ \adj{t}_m, \ZZ)$ is canonically isomorphic to the module 
\begin{equation}\label{eqn:omegajets}
\omega_m \simeq t^{-m} \ZZ[t]/t \ZZ[t]    
\end{equation}
as in \cite[Lemma 4.2]{arcspacedifflsdefernexdocampo}. The maps truncation maps of jets induce maps $\omega_m|_{\jet{m+1}} \to \omega_{m+1}$. Under the identification \eqref{eqn:omegajets}, the maps between the $\omega$'s extend a sum $\sum_{j = 0}^m a_j t^{-j}$ by taking $a_{-m-1} = 0$. 

\begin{lemma}

The colimit $\omega_\infty = \colim_m \omega_m|_{\ljet{r}{\infty}}$ is 
\[\omega_\infty \simeq \ZZ((t))/t \ZZ \adj{t},\]
as a module on $\ljet{r}{\infty}$. The underlying abelian group is
\[\omega_\infty \overset{\ZZ-mod}{\simeq} \bigoplus_{j = 0}^\infty \ZZ t^{-j}.\]

\end{lemma}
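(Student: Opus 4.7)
The plan is to leverage the explicit identification $\omega_m \simeq t^{-m}\ZZ[t]/t\ZZ[t]$ recalled in \eqref{eqn:omegajets} and then simply take the filtered colimit in $\ZZ\adj{t}$-modules.

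First I would observe that under this identification, the transition map $\omega_m|_{\jet{m+1}} \to \omega_{m+1}$ is the tautological inclusion
\[t^{-m}\ZZ[t]/t\ZZ[t] \hookrightarrow t^{-(m+1)}\ZZ[t]/t\ZZ[t],\]
precisely because the paragraph preceding the lemma identifies it with the rule ``extend $\sum_{j=0}^m a_j t^{-j}$ by setting $a_{m+1}=0$.'' Since each $\omega_m$ is pulled back to $\ljet{r}{\infty}$ via the surjection $\ZZ\adj{t} \twoheadrightarrow \ZZ\adj{t}_m$, this is a filtered colimit of $\ZZ\adj{t}$-modules and is therefore computed as the set-theoretic union
\[\omega_\infty \simeq \bigcup_m t^{-m}\ZZ[t]/t\ZZ[t] \;=\; \ZZ[t,t^{-1}]/t\ZZ[t].\]

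Next I would identify $\ZZ[t,t^{-1}]/t\ZZ[t]$ with $\ZZ((t))/t\ZZ\adj{t}$. The inclusion $\ZZ[t,t^{-1}] \hookrightarrow \ZZ((t))$ descends to a map on quotients. It is surjective because every Laurent series splits as
\[\sum_{j \geq -N} a_j t^j \;=\; \Bigl(\sum_{j=-N}^{0} a_j t^j\Bigr) + \Bigl(\sum_{j \geq 1} a_j t^j\Bigr),\]
with the second summand lying in $t\ZZ\adj{t}$; and injective because $t\ZZ\adj{t}\cap \ZZ[t,t^{-1}] = t\ZZ[t]$. This isomorphism is plainly $\ZZ\adj{t}$-linear. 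The canonical representatives $\sum_{j=0}^N a_{-j} t^{-j}$ then exhibit $\{t^{-j}\}_{j \geq 0}$ as a free $\ZZ$-basis, giving the abelian group decomposition $\bigoplus_{j=0}^\infty \ZZ t^{-j}$.

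I do not anticipate a real obstacle. The only thing worth double-checking is that the $\ZZ\adj{t}$-action on $\omega_m = \Hom_\ZZ(\ZZ\adj{t}_m, \ZZ)$ matches the multiplication on $t^{-m}\ZZ[t]/t\ZZ[t]$. Concretely, $t$ acts on the dual basis by $t \cdot (t^j)^{*} = (t^{j-1})^{*}$ for $j \geq 1$ and $t \cdot 1^{*} = 0$ (since $1^{*}$ vanishes on $t, t^2, \ldots$), matching $t \cdot t^{-j} = t^{-(j-1)}$ and $t \cdot 1 = t \in t\ZZ[t]$ in the quotient. Once this compatibility is in place, the lemma follows formally from the fact that filtered colimits of modules commute with the forgetful functor to sets.
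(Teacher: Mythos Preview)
Your proof is correct; the paper simply writes ``Omitted'' for this lemma, so there is no approach to compare against. Your argument is exactly the routine verification the omission invites: identify the transition maps with the inclusions $t^{-m}\ZZ[t]/t\ZZ[t] \hookrightarrow t^{-(m+1)}\ZZ[t]/t\ZZ[t]$, take the union, and then identify $\ZZ[t,t^{-1}]/t\ZZ[t]$ with $\ZZ((t))/t\ZZ\adj{t}$.
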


\begin{proof}

Omitted. 

\end{proof}

\begin{proposition}

The log K\"ahler differentials of the log arc space $\Tl{\ljsp{r}{X/S, \infty}/S}$ are the limit of the formula from Corollary \ref{cor:jetspdiffls}
\[\lkah{\ljsp{r}{X/S, \infty}/S} \simeq \colim_m \rho_*\left(\lkah{X/S}|_{\cal U} \otimes \omega_m \right) \simeq \rho_*\left(\lkah{X/S}|_{\cal U} \otimes \omega \right).\]

\end{proposition}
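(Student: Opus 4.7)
The plan is to deduce the arc space formula from the jet space formula (Corollary \ref{cor:jetspdiffls}) by taking a colimit of modules on the arc space, using the limit presentation $\ljsp{r}{X/S, \infty} = \lim_m \ljsp{r}{X/S, m}$ from Proposition \ref{prop:relativebhatt}. Throughout, write $\cal U_m = \ljsp{r}{X/S, m} \times_S \ljet{r}{m}$ with universal map $\rho_m : \cal U_m \to \ljsp{r}{X/S, m}$, and similarly $\cal U = \cal U_\infty$ with $\rho = \rho_\infty$.

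First, I would record the general principle that log K\"ahler differentials turn cofiltered limits of (log) schemes with affine transition maps into filtered colimits: for $Y = \lim_m Y_m$ with $Y_m \to Y_{m-1}$ the affine-bundle truncation maps, one has
\[
\lkah{Y/S} \;\simeq\; \colim_m \bigl(\lkah{Y_m/S}\bigr)\big|_Y.
\]
This is standard for ordinary K\"ahler differentials with respect to ordinary limits of affine (or quasi-compact quasi-separated) schemes, and in the log setting it follows by combining the schematic statement with the pullback square of Remark \ref{rmk:logstackpbsquare} and the analogous colimit description for the log-structure contribution (which stabilizes because the tropicalization $\af[X]$ is the same at every stage: the truncation maps are strict, hence $\lkah{\ljsp{r}{X/S, m}/\jsp{\ljsp{r}{X/S, m}^\circ, m}}$ depends only on $\ev X$).

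Next, I would substitute Corollary \ref{cor:jetspdiffls} for each finite $m$, giving
\[
\lkah{\ljsp{r}{X/S, \infty}/S} \;\simeq\; \colim_m \bigl(\rho_{m,*}(\lkah{X/S}|_{\cal U_m} \otimes \omega_m)\bigr)\big|_{\ljsp{r}{X/S, \infty}}.
\]
Now I would commute the colimit past the pushforward $\rho_{m,*}$. Since each $\rho_m$ is affine (a base change of the finite flat map $\ljet{r}{m} \to S$), pushforward along $\rho_m$ is exact and, more to the point, the diagram of $\cal U_m$'s is also a cofiltered system with affine transition maps whose limit is $\cal U$. The base-change compatibility
\[
\rho_{m,*}(F_m)\big|_{\ljsp{r}{X/S, \infty}} \;\simeq\; \rho_{*}\bigl(F_m|_{\cal U_\infty}\bigr)
\]
for the affine truncation $\cal U_\infty \to \cal U_m$ lets me rewrite the right-hand side as
\[
\rho_{*}\Bigl(\colim_m \bigl(\lkah{X/S}|_{\cal U_m} \otimes \omega_m\bigr)\big|_{\cal U_\infty}\Bigr)
\;\simeq\; \rho_{*}\bigl(\lkah{X/S}|_{\cal U} \otimes \colim_m \omega_m|_{\cal U}\bigr),
\]
the second isomorphism because $\lkah{X/S}|_{\cal U_m}$ is itself the restriction of a single sheaf $\lkah{X/S}|_{\cal U}$ (the target $X$ is independent of $m$) and because tensor products commute with filtered colimits. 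By the definition $\omega = \omega_\infty = \colim_m \omega_m|_{\ljet{r}{\infty}}$, this is exactly the desired formula.

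The main obstacle I expect is the first step: justifying rigorously that $\lkah{-/S}$ turns this particular cofiltered limit into a filtered colimit in the log setting. The arc space is not of finite type, so one has to be careful about which module-theoretic K\"ahler differentials one means; the cleanest route is to verify it directly on an affine atlas where $\ljsp{r}{X/S, \infty} = \Spec A_\infty$ with $A_\infty = \colim_m A_m$ and use $\Omega_{A_\infty/S} = \colim_m (\Omega_{A_m/S} \otimes_{A_m} A_\infty)$, then graft on the log-structure contribution via the strict squares of Remark \ref{rmk:logstackpbsquare}. Once this compatibility is in hand, the rest of the proof is a formal chase through pushforward and tensor product compatibilities with filtered colimits.
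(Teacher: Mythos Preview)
Your proposal is correct and follows the same skeleton as the paper: invoke the relative Bhatt theorem to write the arc space as a limit of jet spaces, then pass to a colimit on differentials and feed in the finite-$m$ formula.

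The one difference worth noting is that the paper routes the colimit step through tangent spaces rather than sheaves: from $\ljsp{r}{X/S, \infty} = \lim_m \ljsp{r}{X/S, m}$ it is \emph{formal} on functors of points that $\Tl{\ljsp{r}{X/S, \infty}/S} = \lim_m \Tl{\ljsp{r}{X/S, m}/S}$ (the log tangent functor is itself a Weil restriction, hence a right adjoint preserving limits), and the colimit statement for $\lkah{-}$ is then read off by dualizing. This sidesteps precisely the obstacle you flag about justifying that log K\"ahler differentials send cofiltered limits of schemes to filtered colimits of modules. Your direct sheaf-level argument is fine and makes the base-change and colimit-commutation steps explicit, but the tangent-space detour buys you that first step for free.
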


\begin{proof}

The relative version of Bhatt's theorem $\ljsp{r}{X/S, \infty} = \lim_m \ljsp{r}{X/S, m}$ from Proposition \ref{prop:relativebhatt} formally implies the log tangent space of the log arc space $\Tl{\ljsp{r}{X/S, \infty}/S}$ is the limit of the log tangent spaces of the log jets. Likewise the log K\"ahler differentials are obtained by a colimit. 

\end{proof}

\subsection{The log tangent space $\Tl{X}$ of log points}

We remind the reader about the log tangent space before proving log analogues of the above results. 

\begin{definition}

The {\em log tangent space} of a log scheme $X$ is the scheme
\[\Tl{X} \coloneqq \VV(\lkah{X}) = \SSpec{X} \Sym \lkah{X}\]
over $X$ associated to the sheaf of log K\"ahler differentials. If $X \to Y$ is a morphism of log schemes, one can relativize this construction 
\[\Tl{X/Y} \coloneqq \VV(\lkah{X/Y}).\]

\end{definition}

If $X \to Y$ is log smooth, $\lkah{X/Y}$ is locally free and $\Tl{X/Y}$ is a vector bundle.

\begin{remark}\label{rmk:logtanspfunctorofpoints}

We claim the functor of points for $\Tl{X/Y}$ on $X$-schemes $S$ consists of diagrams 
\begin{equation}\label{eqn:logtansp}
\begin{tikzcd}
S \ar[r, "g"] \ar[d]      &X \ar[d]      \\
S \times \jet{1} \ar[r] \ar[ur, dashed]        &Y
\end{tikzcd},    
\end{equation}
where $\jet{1}$ is given the trivial log structure. Since $S \subseteq S \times \jet{1}$ is a log thickening, \cite[Theorem IV.2.2.2]{ogusloggeom} shows the sheaf of dashed arrows form a pseudotorsor under log derivations
\[\begin{split}
\Der^\ell_{X/Y}(g_* \OO_S) &= \Hom(\lkah{X/Y}, g_* \OO_S) \\
&= \Hom(g^* \lkah{X/Y}, \OO_S) \\
&= \Tl{X/Y}(S)    
\end{split}.\]
The retraction $S \times \jet{1} \to S$ confirms that the sheaf of diagrams \eqref{eqn:logtansp} is a torsor under $\Tl{X/Y}(S)$ and also trivializes the torsor. Thus $\Tl{X/Y}(S)$ is the set of such diagrams \eqref{eqn:logtansp}.

\end{remark}

These are not to be confused with the tangent sheaf $\cal T_X = (\kah{X})^\vee$, the dual of the sheaf of K\"ahler differentials that we do not use in this article.

If $X = X^\circ$ has trivial log structure or more generally if the map $X \to Y$ is strict, the log K\"ahler differentials and log tangent space coincide with the ordinary ones
\[\lkah{X/Y} = \kah{X^\circ/Y^\circ}, \qquad \Tl{X/Y} = T_{X^\circ/Y^\circ} = \VV(\kah{X^\circ/Y^\circ}).\]

One surprise with the log tangent space is that points with log structure can have higher dimensional log tangent spaces. 

\begin{example}[Standard log point]

Consider the standard log point $P_{\NN}$ of Definition \ref{defn:evalsp}. The log tangent space $\Tl{P_{\NN}}$ is one-dimensional, whereas the ordinary tangent space $T_{\Spec k}$ of $P_\NN^\circ = \Spec k$ is zero-dimensional. 

\end{example}

The log point $P_{\NN^k}$ with rank $k$ log structure has tangent space of dimension $k$. We now generalize this for later use.

\begin{remark}\label{rmk:leslogcotcplx}
Let $X \to Y \to Z$ be a sequence of fs log schemes with $Y \to Z$ log smooth. Obtain an exact sequence 
\[0 \to h^{-1} \lccx{X/Z} \to h^{-1} \lccx{X/Y} \to \lkah{Y/Z}|_X \to \lkah{X/Z} \to \lkah{X/Y} \to 0\]
using O. Gabber's log cotangent complex and then identifying the low degree terms with the log cotangent complex $\lccx{-}$ of M. Olsson \cite[Theorem 8.27]{logcotangent}. 

If $X \to Y$ is strict, its log cotangent complex coincides with the ordinary one $\lccx{X/Y} = \ccx{X/Y}$. If $X \to Y$ is a strict closed embedding with ideal $I$, the first homology group $h^{-1} \lccx{X/Y}$ of the log cotangent complex is $I/I^2$. 

\end{remark}

Let $\alpha = \Spec L$ be a point of finite type over $\Spec k$ with log structure $M_\alpha = L^* \oplus Q$ for some fs sharp monoid $Q$. There is a strict embedding of $\alpha$ as the vertex inside the affine toric variety $\Aff_{Q, L} = \Spec L[Q]$. The vertex map $L[Q] \to L$ sends the nonunits $Q^+ \subseteq Q$ to $0 \in L$ and $0 \in Q$ to $1 \in L$. Its kernel is $J = Q^+ L[Q]$. The quotient $J/J^2$ is generated by irreducible elements of $Q^+$ as an $L$ vector space, so it has dimension 
\[\dim_L J/J^2 = \#(Q^+ \setminus Q^{+2}).\]

Since the toric variety $\Aff_{Q, L}$ is log smooth over $\alpha^\circ$, there is an exact sequence
\begin{equation}\label{eqn:logcotangentvertex}
0 \to h^{-1} \lccx{\alpha/\alpha^\circ} \to J/J^2 \overset{dlog}{\longrightarrow} \lkah{\Aff_{Q, L}/\alpha^\circ}|_{\alpha} \to \lkah{\alpha/\alpha^\circ} \to 0.   
\end{equation}
We claim the map $dlog$ is zero.

\begin{proposition}\label{prop:logcotcplxlogpoint}

The map $dlog$ in the sequence \eqref{eqn:logcotangentvertex} is zero. We have identifications
\[h^{-1} \lccx{\alpha/\alpha^\circ} \simeq J/J^2\]
\[\lkah{\Aff_{Q, L}/\alpha^\circ}|_{\alpha} \simeq \lkah{\alpha/\alpha^\circ}.\]
The dimensions are then
\[\dim_L h^{-1} \lccx{\alpha/\alpha^\circ} = \#(Q^+ \setminus Q^{+2})\]
\[\dim \lkah{\alpha/\alpha^\circ} = \rk \gp Q.\]

\end{proposition}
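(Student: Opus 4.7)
The strategy is to compute the boundary map $\delta : J/J^2 \to \lkah{\Aff_{Q,L}/\alpha^\circ}|_{\alpha}$ in the conormal sequence of Remark~\ref{rmk:leslogcotcplx} applied to the strict closed immersion $\alpha \hookrightarrow \Aff_{Q,L}$ over $\alpha^\circ$. Because this immersion is strict, the triangle $f^*\lccx{\Aff_{Q,L}/\alpha^\circ} \to \lccx{\alpha/\alpha^\circ} \to \lccx{\alpha/\Aff_{Q,L}}$ gives the usual conormal description: $h^{-1}\lccx{\alpha/\Aff_{Q,L}}$ is identified with $J/J^2$, and the map $\delta$ sends the class of $f$ to the restriction of the universal log derivation $d_{\Aff_{Q,L}/\alpha^\circ}(f)$. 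So the whole proof reduces to showing $\delta=0$ on a generating set.

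First I would identify $J/J^2$ explicitly. Since $L[Q]$ is $L$-free on the monoid $Q$ and $J$ is $L$-spanned by the non-unit monomials $x^q$ with $q \in Q^+$, the square $J^2$ is $L$-spanned by monomials $x^q$ with $q \in Q^{+2}$. Hence $J/J^2$ is $L$-free on $\{x^q : q \in Q^+\setminus Q^{+2}\}$, giving $\dim_L J/J^2 = \#(Q^+\setminus Q^{+2})$. Next I would compute $\delta$ on these generators. The defining identity for log Kähler differentials on the toric chart $\Aff_{Q,L}$ is $d x^q = x^q \cdot d\log q$, where $d\log q \in \lkah{\Aff_{Q,L}/\alpha^\circ}$ is the image of $q \in \gp Q$ under the canonical map. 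Restricting to $\alpha$, the function $x^q$ becomes $0 \in L$ (this is precisely the vertex map $L[Q] \to L$ killing $Q^+$), so $\delta(x^q) = (x^q \cdot d\log q)|_{\alpha} = 0$ for every $q \in Q^+\setminus Q^{+2}$; thus $\delta = 0$.

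Exactness of \eqref{eqn:logcotangentvertex} then breaks the four-term sequence into two short exact pieces and yields the isomorphisms $h^{-1}\lccx{\alpha/\alpha^\circ} \simeq J/J^2$ and $\lkah{\Aff_{Q,L}/\alpha^\circ}|_{\alpha} \simeq \lkah{\alpha/\alpha^\circ}$. The first gives the dimension of $h^{-1}\lccx{\alpha/\alpha^\circ}$ as $\#(Q^+\setminus Q^{+2})$. For the second, I would invoke the standard description of log differentials of a log smooth toric variety: $\lkah{\Aff_{Q,L}/\alpha^\circ}$ is free on $L \otimes_{\ZZ} \gp Q$ via $q \mapsto d\log q$, so its restriction to $\alpha$ has $L$-rank $\rk \gp Q$. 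The only real subtlety is recognizing the map labelled $dlog$ as genuinely $f \mapsto df|_{\alpha}$ coming from the conormal part of the log cotangent triangle; once one writes $dx^q = x^q\,d\log q$ and remembers $x^q$ vanishes at the vertex for $q\in Q^+$, the vanishing is immediate.
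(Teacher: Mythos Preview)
Your argument is correct and takes a genuinely different route from the paper. The paper proves $dlog = 0$ by dimension counting: it cites \cite[Corollary IV.2.3.6]{ogusloggeom} to get $\dim_L \lkah{\alpha/\alpha^\circ} = \rk \gp Q$, observes that $\lkah{\Aff_{Q,L}/\alpha^\circ}|_\alpha$ also has rank $\rk \gp Q$ by log smoothness, and then exactness forces the surjection $\lkah{\Aff_{Q,L}/\alpha^\circ}|_\alpha \to \lkah{\alpha/\alpha^\circ}$ to be an isomorphism, whence $dlog = 0$. You instead compute the boundary map on the generators $x^q$ of $J/J^2$ and use the identity $dx^q = x^q \cdot d\log q$ together with $x^q|_\alpha = 0$ for $q \in Q^+$ to see directly that each generator is killed. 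Your approach is a bit longer but more explicit and self-contained---it needs no external citation to pin down $\dim \lkah{\alpha/\alpha^\circ}$, since that dimension is read off from the isomorphism you have just produced---and it explains \emph{why} the map vanishes rather than inferring it from an Euler-characteristic count. The paper's route is terser and avoids unwinding the conormal description of the boundary map.
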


\begin{proof}

Since \eqref{eqn:logcotangentvertex} is a complex of $L$-vector spaces, we can count dimensions to see that $dlog$ is zero. It suffices to check that
\[\dim \lkah{\alpha/\alpha^\circ} = \rk \gp Q = \dim \lkah{\Aff_{Q, L}/\alpha^\circ}|_{\alpha}.\]
The first equality holds by \cite[Corollary IV.2.3.6]{ogusloggeom} and the latter because $\Aff_{Q, L} \to \alpha^\circ$ is log smooth of dimension $\rk \gp Q$.
    
\end{proof}

If the ground field $k$ is perfect, any finite type field extension $\alpha^\circ/k$ is smooth and $\dim \kah{\alpha^\circ/k}$ is the transcendence degree of $L/k$. The sequence
\begin{equation}\label{eqn:absvsrelativepoint}
0 \to h^{-1} \lccx{\alpha} \to h^{-1} \lccx{\alpha/\alpha^\circ} \to \kah{\alpha^\circ} \to \lkah{\alpha} \to \lkah{\alpha/\alpha^\circ} \to 0  
\end{equation}
is exact by Remark \ref{rmk:leslogcotcplx}. 

This exact sequence \eqref{eqn:absvsrelativepoint} relates the Euler characteristic of the 2-truncated absolute cotangent complex of $\alpha$ with the relative one. 

\begin{lemma}\label{lem:eulerchar2termlogcot}
Let $k$ be perfect. The alternating sums of the terms in the absolute and relative log cotangent complexes differ by the dimension of $\alpha^\circ$ over $k$
\[\begin{split}
\dim \lkah{\alpha} - \dim h^{-1} \lccx{\alpha} &= \dim \lkah{\alpha/\alpha^\circ} - \dim h^{-1} \lccx{\alpha/\alpha^\circ} + \dim_k {\alpha^\circ}         \\
&=\rk \gp Q - \#(Q^+ \setminus Q^{+2}) + \dim_k \alpha^\circ.
\end{split}\]

\end{lemma}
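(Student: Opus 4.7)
The plan is to compute the alternating sum of dimensions in the five-term exact sequence \eqref{eqn:absvsrelativepoint} and then substitute the known dimensions from Proposition \ref{prop:logcotcplxlogpoint}. Since every term in \eqref{eqn:absvsrelativepoint} is a finite-dimensional $L$-vector space, exactness forces
\[
\dim h^{-1}\lccx{\alpha} - \dim h^{-1}\lccx{\alpha/\alpha^\circ} + \dim \kah{\alpha^\circ} - \dim \lkah{\alpha} + \dim \lkah{\alpha/\alpha^\circ} = 0,
\]
which, after rearrangement, gives the first equality of the lemma provided one identifies $\dim \kah{\alpha^\circ}$ with $\dim_k \alpha^\circ$.

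That identification is where perfectness of $k$ enters. Since $\alpha^\circ = \Spec L$ with $L/k$ a finitely generated field extension and $k$ perfect, the extension is separable, so $\alpha^\circ \to \Spec k$ is geometrically regular and in particular the module $\kah{\alpha^\circ/k}$ is free over $L$ of rank equal to the transcendence degree $\mathrm{tr.deg}_k L = \dim_k \alpha^\circ$. I would cite the standard result (e.g. \cite[Tag 07BU]{sta} or Matsumura) that separable field extensions have K\"ahler differentials of this rank; this is the one step that genuinely uses the hypothesis on $k$.

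The second equality in the lemma is then immediate from Proposition \ref{prop:logcotcplxlogpoint}, which supplies $\dim \lkah{\alpha/\alpha^\circ} = \rk \gp Q$ and $\dim h^{-1}\lccx{\alpha/\alpha^\circ} = \#(Q^+ \setminus Q^{+2})$. Substituting these values yields
\[
\dim \lkah{\alpha/\alpha^\circ} - \dim h^{-1}\lccx{\alpha/\alpha^\circ} + \dim_k \alpha^\circ = \rk \gp Q - \#(Q^+ \setminus Q^{+2}) + \dim_k \alpha^\circ,
\]
as desired.

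The main obstacle, if any, is bookkeeping rather than substance: one must be confident that all five terms in \eqref{eqn:absvsrelativepoint} are indeed finite-dimensional over $L$ so that the alternating sum argument is legitimate. For the K\"ahler differentials this is clear (finite transcendence degree and finitely generated monoid), and for the $h^{-1}$ terms it follows from Proposition \ref{prop:logcotcplxlogpoint} together with the fact that $\alpha^\circ/k$, being a finite type smooth extension of fields, has $h^{-1}\ccx{\alpha^\circ/k} = 0$, which additionally confirms that the leftmost map in \eqref{eqn:absvsrelativepoint} identifies $h^{-1}\lccx{\alpha}$ with a subspace of the finite-dimensional $h^{-1}\lccx{\alpha/\alpha^\circ}$.
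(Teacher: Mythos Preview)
Your proof is correct and follows exactly the approach the paper takes: the exact sequence \eqref{eqn:absvsrelativepoint} together with perfectness of $k$ (giving $\dim \kah{\alpha^\circ} = \dim_k \alpha^\circ$) and the values from Proposition \ref{prop:logcotcplxlogpoint} yield the result by the alternating-sum argument. The paper in fact leaves the lemma without a separate proof, having set up precisely these ingredients in the paragraph preceding it; your finite-dimensionality bookkeeping is a welcome addition.
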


\subsection{The log tangent space of $\ljsp{r}{X, m}$}

We define the log Weil restriction $\WR^\ell$ similarly, before proving it coincides with the ordinary Weil restriction for vector bundles $\VV(F)$. Let $S' \to S$ be an integral and saturated, Gorenstein, finite, locally free morphism of log schemes and $Y \to S'$ quasiprojective. Write $\WR^\ell_{S'/S} Y$ for the functor on log schemes $T$ over $S$
\begin{equation}\label{eqn:logweilrestn}
\WR^\ell_{S'/S} Y(T) \coloneqq 
\left\{
\begin{tikzcd}
T \times_S^\ell S' \ar[rr, dashed] \ar[dr]      &       &Y \ar[dl]      \\
        &S'
\end{tikzcd}
\right\}.    
\end{equation}
Since $S' \to S$ is integral and saturated, the fs fiber product $T \times^\ell_S S'$ coincides with the scheme theoretic fiber product on underlying schemes. The only difference from the ordinary Weil restriction is that all the morphisms in \eqref{eqn:logweilrestn} are of log schemes. Continue to write $\omega$ for the ordinary dualizing sheaf of the map $S' \to S$. 

Given a coherent sheaf $F$ on a log scheme $Y$, endow $\VV(F)$ with the log structure pulled back from $Y$.

\begin{lemma}

The ordinary Weil restriction of $\VV(F)$ coincides with the log Weil restriction on underlying schemes 
\[(\WR^\ell_{S'/S} \VV(F))^\circ = \WR_{S'/S} \VV(F).\]
The log structure of $\WR^\ell_{S'/S} \VV(F)$ is pulled back from the base $S$. 

\end{lemma}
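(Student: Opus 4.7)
The plan is to compute the functor of points of $\WR^\ell_{S'/S} \VV(F)$ on fs log schemes $T$ over $S$ and show that the resulting functor factors through the underlying-scheme functor $T \mapsto T^\circ$. Once that is established, standard representability considerations will identify $\WR^\ell_{S'/S} \VV(F)$ with the strict pullback $\WR_{S'/S} \VV(F) \times_{S^\circ} S$, simultaneously yielding both assertions of the lemma: the underlying scheme is the ordinary Weil restriction, and the log structure is pulled back from $S$.

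First I would unpack the $T$-points. A $T$-valued point of $\WR^\ell_{S'/S} \VV(F)$ is a log morphism $g : T \times^\ell_S S' \to \VV(F)$ over $S'$. Because the log structure on $\VV(F)$ is pulled back from $Y$ --- and since, matching the setup of Lemma \ref{lem:weilrestnvb} and the intended applications, $Y$ plays the role of $S'$ so that $\VV(F) \to S'$ is itself strict --- such a log morphism is determined by, and corresponds uniquely to, an underlying scheme morphism $(T \times^\ell_S S')^\circ \to \VV(F)^\circ$ over $(S')^\circ$. Strictness of the target over $S'$ forces the log structure map on $g$ to factor canonically through the structure map $T \times^\ell_S S' \to S'$, so no extra data is required.

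Next I would invoke the hypothesis that $S' \to S$ is integral and saturated to identify $(T \times^\ell_S S')^\circ = T^\circ \times_{S^\circ} (S')^\circ$. Combining this with the previous step yields a bijection, natural in $T$,
\[
\WR^\ell_{S'/S} \VV(F)(T) \;\longleftrightarrow\; \Hom_{(S')^\circ}\!\bigl(T^\circ \times_{S^\circ} (S')^\circ,\, \VV(F)^\circ\bigr) \;=\; \WR_{S'/S} \VV(F)(T^\circ).
\]
The right-hand side depends only on $T^\circ$ together with the structural map $T \to S$, which is exactly the condition for representability by the strict pullback $\WR_{S'/S} \VV(F) \times_{S^\circ} S$. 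This strict pullback has underlying scheme $\WR_{S'/S} \VV(F)$ and log structure pulled back from $S$, giving both assertions.

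The main point to spell out carefully is the strictness-based reduction in the second paragraph --- the claim that log morphisms into a strict target over $S'$ are the same data as their underlying scheme morphisms over $(S')^\circ$. This is a routine consequence of the definition of strictness but is the only place where the hypothesis on the log structure of $\VV(F)$ is used, so it deserves explicit verification. The integrality-saturatedness of $S' \to S$ enters only to match the fs and ordinary fiber products on underlying schemes, and the possible subtlety when $Y \to S'$ is non-strict does not arise in the intended applications and will be flagged but not pursued.
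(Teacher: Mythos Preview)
Your proposal is correct and is the natural argument: the paper's own proof is simply ``Omitted,'' and your functor-of-points computation via strictness of $\VV(F) \to S'$ together with the integral-saturated hypothesis on $S' \to S$ is exactly the intended verification.
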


\begin{proof}

Omitted. 

\end{proof}

Write $\cal U$ for the universal object $\WR^\ell_{S'/S} Y \times_S S'$ over the log Weil restriction $\WR^\ell_{S'/S} Y$
\[\begin{tikzcd}
\cal U \ar[r] \ar[d, "\rho"] \lpbstrict      &S' \ar[d]         \\
\WR^\ell Y \ar[r]         &S.
\end{tikzcd}\]
Obtain the log versions the same way.

\begin{proposition}\label{prop:logtansplogjetsp}

The log tangent space of the log Weil restriction of $Y$ is the Weil restriction of the log tangent space 
\[\Tl{\WR^\ell Y/S} = \WR^\ell_{S'/S} \Tl{Y/S}|_{\cal U}.\]
\[\ljsp{r}{\WR^\ell Y/S, m} = \WR^\ell_{S'/S} \ljsp{r}{Y/S, m}|_{\cal U}\]
\[\lkah{\ljsp{r}{Y, m}/S} = \rho_*\left(\lkah{Y/S}|_\cal U \otimes \omega\right).\]

\end{proposition}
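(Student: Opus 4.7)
The plan is to mimic the ordinary case (Lemma \ref{lem:jetsptansp}, Corollary \ref{cor:jetspdiffls}) via a functor-of-points computation, leveraging two specific features of the logarithmic setup: the hypothesis that $S'\to S$ is integral and saturated (so fs fiber products $T \times^\ell_S S'$ agree with scheme-theoretic ones, making $\cal U$ behave well under base change), and the fact that for sheaves $F$ on $\cal U$ the log and ordinary Weil restrictions of $\VV(F)$ agree on underlying schemes with log structure pulled back from the base (the lemma preceding the proposition). Together these reduce everything to computations that are formally identical to the non-logarithmic case.

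For the first two displayed equalities, I would compute the functor of points of both sides on a log scheme $T$ over $\WR^\ell Y$. By Remark \ref{rmk:logtanspfunctorofpoints} (resp.\ the definition of $\ljsp{r}{-,m}$), a $T$-point of $\Tl{\WR^\ell Y/S}$ (resp.\ of $\ljsp{r}{\WR^\ell Y/S, m}$) is a dashed arrow
\[
\begin{tikzcd}
T \ar[r] \ar[d] & \WR^\ell Y \ar[d] \\
T \times \jet{1}\ (\text{or}\ T \times \ljet{r}{m}) \ar[r] \ar[ur, dashed] & S.
\end{tikzcd}
\]
By the universal property \eqref{eqn:logweilrestn} of the log Weil restriction, such a dashed arrow is the same as an $S'$-morphism $(T\times J)\times^\ell_S S' \to Y$ extending the given $\cal U_T\to Y$, where $J$ denotes $\jet{1}$ or $\ljet{r}{m}$ and $\cal U_T \coloneqq T\times^\ell_{\WR^\ell Y} \cal U = T\times^\ell_S S'$. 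Because $S'\to S$ is integral and saturated, fs pullback along $T\times J\to S$ commutes with the product by $J$, giving the key rewriting
\[
(T\times J)\times^\ell_S S' \;=\; \cal U_T \times J.
\]
This presents the dashed arrow as a $\cal U_T$-point of $\Tl{Y/S}|_{\cal U}$ (resp.\ $\ljsp{r}{Y/S,m}|_{\cal U}$), i.e.\ a $T$-point of $\WR^\ell_{\cal U/\WR^\ell Y}$ of that object, yielding both desired equalities.

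For the third formula, I would specialize the first to $m=1$, so $\Tl{\WR^\ell Y/S} \simeq \WR^\ell_{\cal U/\WR^\ell Y}\Tl{Y/S}|_{\cal U}$. Since $\Tl{Y/S}|_{\cal U} = \VV(\lkah{Y/S}|_{\cal U})$ with log structure pulled back from $\cal U$, the lemma just above the proposition lets me replace the log Weil restriction with the ordinary one on underlying schemes. Applying the vector-bundle formula \eqref{eqn:weilrestnvb} from Lemma \ref{lem:weilrestnvb} to the finite flat Gorenstein map $\cal U\to \WR^\ell Y$ (whose dualizing sheaf pulls back from $\omega$) produces
\[
\Tl{\WR^\ell Y/S} \simeq \VV\bigl(\rho_*(\lkah{Y/S}|_{\cal U}\otimes\omega)\bigr),
\]
from which the identification of $\lkah{\WR^\ell Y/S}$ with $\rho_*(\lkah{Y/S}|_{\cal U}\otimes\omega)$ follows by taking $\Sym^1$.

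The main obstacle is really the commutation of fs fiber product with the factor $J$ in the crucial identity $(T\times J)\times^\ell_S S' = \cal U_T\times J$; everything else is formal or parallel to the ordinary case. This commutation is where the integral-and-saturated hypothesis on $S'\to S$ is indispensable, both to ensure base change along arbitrary $T\times J\to S$ behaves schematically and to ensure the Gorenstein/flatness data needed for Lemma \ref{lem:weilrestnvb} survives to the logarithmic setting. Once this verification is in hand (and once one notes, for the jet-space variant, that the nontrivial log structure on $\ljet{r}{m}$ is harmless because it only enters through the external factor $J$, not through the $S'$-direction), the remaining bookkeeping is a transcription of the proofs of Lemma \ref{lem:jetsptansp} and Corollary \ref{cor:jetspdiffls}.
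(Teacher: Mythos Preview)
Your proposal is correct and matches the paper's approach exactly: the paper does not give a separate proof of Proposition~\ref{prop:logtansplogjetsp} but simply writes ``Obtain the log versions the same way'' immediately before stating it, meaning the arguments of Lemma~\ref{lem:jetsptansp} and Corollary~\ref{cor:jetspdiffls} carry over verbatim once one uses the integral-and-saturated hypothesis and the lemma comparing $\WR^\ell$ and $\WR$ on $\VV(F)$. Your write-up is in fact more detailed than what the paper provides, and your identification of the key step $(T\times J)\times^\ell_S S' = \cal U_T\times J$ as the only place requiring care is accurate.
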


\section{The embedding dimension of the log jet space}\label{s:fittingideals}

We recall some notions from commutative algebra before our main results. This section closely parallels \cite[\S 6]{arcspacedifflsdefernexdocampo}. 

We state our main application, computing the embedding dimension of points of the log jet space. The rest of the section explains the terminology and the meaning of the formula before the proof. 

\begin{theorem}[{\cite[Lemma 8.1]{arcspacedifflsdefernexdocampo}}]\label{thm:embdim}

Suppose $X$ is finite type over a perfect ground field $\Spec k^\circ$ with trivial log structure.  Let $\alpha_m \in |\ljsp{r}{X, m}|$ be a point in the sense of \cite[03BT]{sta} which lifts to an arc $\alpha_\infty \in |\ljsp{r}{X, \infty}|$. 

The embedding dimension $\embdim\left(\OO_{\ljsp{r}{X, m}, \alpha_m}\right)$ of the log jet space at $\alpha_m$ is bounded by
\begin{equation}\label{eqn:embdimthm}
\begin{split}
\embdim\left(\OO_{\ljsp{r}{X, m}, \alpha_m}\right) \leq &d_m(m+1) + \ord_{\alpha_\infty} \left(\Fitt^{d_m}(\lkah{X}|_{\alpha_\infty \times \ljet{r}{\infty}})\right) \\
&- \rk \gp{\bar M}_{\alpha_m} + N - \dim_k \alpha^\circ.
\end{split}
\end{equation}

We have equality if $X$ is log smooth. The number $N$ is the number of irreducible elements in the stalk of the sheaf $\bar M_\alpha$ at a geometric point and $d_m$ is the Betti number (Definition \ref{defn:bettinum}) of $\lkah{X}|_{\alpha_m \times \ljet{r}{m}}$.

\end{theorem}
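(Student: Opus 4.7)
The strategy mirrors \cite[Lemma 8.1]{arcspacedifflsdefernexdocampo}, substituting Olsson's log cotangent complex for the ordinary one throughout. I would begin with the Jacobi--Zariski triangle associated to $\alpha_m \hookrightarrow \ljsp{r}{X,m} \to \Spec k$. Since the first map is a strict embedding of a residue-field-valued point, $\lccx{\alpha_m/\ljsp{r}{X,m}}$ coincides with the ordinary cotangent complex of a quotient, so its $h^{-1}$ is the conormal $\mathfrak m_{\alpha_m}/\mathfrak m_{\alpha_m}^2$, whose $L$-dimension equals $\embdim(\OO_{\ljsp{r}{X,m},\alpha_m})$. The alternating-sum identity in the associated long exact sequence in degrees $-1, 0$ (using that $\lkah{\alpha_m/\ljsp{r}{X,m}} = 0$) yields
\[
\embdim = \dim_L \lkah{\ljsp{r}{X,m}/k}|_{\alpha_m} - \dim_L \lkah{\alpha_m/k} + \dim_L h^{-1}\lccx{\alpha_m/k} - \dim_L h^{-1}\lccx{\ljsp{r}{X,m}/k}|_{\alpha_m},
\]
and dropping the (nonnegative) last term produces the desired inequality, with equality exactly when that term vanishes.

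To evaluate the first term, I would invoke Proposition \ref{prop:logtansplogjetsp}: since $\rho$ is finite locally free and $\omega$ is invertible, restriction to $\alpha_m$ identifies $\lkah{\ljsp{r}{X,m}/k}|_{\alpha_m}$ with $\lkah{X}|_{\alpha_m \times \ljet{r}{m}}$ as an $L$-vector space. Lifting $\alpha_m$ to the arc $\alpha_\infty$ presents this as the base change along $F\adj{t} \to F\adj{t}_m$ (with $F = \kappa(\alpha_\infty)$) of the $F\adj{t}$-module $M_\infty := \lkah{X}|_{\alpha_\infty \times \ljet{r}{\infty}}$. The structure theorem for finitely generated modules over the DVR $F\adj{t}$, combined with the truncation/Fitting-ideal identity of \cite[\S 6]{arcspacedifflsdefernexdocampo}, then gives
\[
\dim_L \lkah{X}|_{\alpha_m \times \ljet{r}{m}} \leq d_m(m+1) + \ord_{\alpha_\infty}\!\bigl(\Fitt^{d_m}(M_\infty)\bigr),
\]
with equality when $M_\infty$ is free, i.e.\ when $X$ is log smooth.

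The remaining correction $-\dim_L \lkah{\alpha_m/k} + \dim_L h^{-1}\lccx{\alpha_m/k}$ is computed directly by Lemma \ref{lem:eulerchar2termlogcot}, which evaluates to $-\rk \gp{\bar M}_{\alpha_m} + N - \dim_k \alpha^\circ$. Combining the three contributions establishes \eqref{eqn:embdimthm}. For the equality claim, log smoothness of $X$ makes the truncation maps strict affine bundles (by the Corollary following Lemma \ref{lem:logsmaffinebund}) and $\ev X$ log smooth (reducing strict-étale-locally to $\Aff^n$, where $\ev \Aff^n = \bigsqcup_{\NN^n} \Aff^n$), so $\ljsp{r}{X,m}/k$ is itself log smooth; then $h^{-1}\lccx{\ljsp{r}{X,m}/k}|_{\alpha_m}$ vanishes and $M_\infty$ is free over $F\adj{t}$, so both inequalities saturate simultaneously.

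The principal technical hurdle is the Fitting-ideal step: its correctness depends sensitively on matching the convention for $d_m$ (generic rank of $M_\infty$ over $F\adj{t}$ versus Betti number of the truncation $M_m$ over $L\adj{t}_m$) with the indexing convention for Fitting ideals, and on controlling the invariant factors of $M_\infty$ that exceed $t^{m+1}$ (which is exactly what forces the Fitting estimate to be an inequality rather than an equality in general). Once this purely commutative-algebraic identity over the DVR $F\adj{t}$ is in place, the log structure enters only through Proposition \ref{prop:logtansplogjetsp} and Lemma \ref{lem:eulerchar2termlogcot}, so the remainder of the argument transplants cleanly from the scheme-theoretic case.
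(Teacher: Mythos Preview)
Your approach is essentially the paper's: the long exact sequence of the log cotangent complex for $\alpha_m \hookrightarrow J \to \Spec k$ together with Proposition~\ref{prop:logtansplogjetsp} and Lemma~\ref{lem:eulerchar2termlogcot} is exactly how the paper assembles \eqref{eqn:embdimthm}.

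Two inaccuracies are worth flagging. First, the Fitting-ideal step is an \emph{equality}, not an inequality. With $d_m$ taken to be the Betti number of the truncation $M_m$ (Definition~\ref{defn:bettinum}), the invariant factors of $M_\infty$ with exponent $\geq m+1$ are absorbed into the free part of $M_m$; the remaining torsion has total $L$-dimension exactly $\ord_{\alpha_\infty}\Fitt^{d_m}(M_\infty)$. This is the content of \S\ref{ss:commalg} and the Corollary recording \eqref{eqn:ordfittingjacobian}. So the inequality in the theorem comes \emph{only} from truncating the long exact sequence, not from the Fitting computation; your ``both inequalities saturate'' picture is misleading, though harmless for the final statement.

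Second, your displayed ``alternating-sum identity'' is not literally correct: the last term should be the dimension of the \emph{image} of $h^{-1}\lccx{\ljsp{r}{X,m}/k}|_{\alpha_m}$ in $h^{-1}\lccx{\alpha_m/k}$, not the full dimension of the source. Dropping that image term yields the paper's inequality \eqref{eqn:embdim1}, and log smoothness of $X$ forces $h^{-1}\lccx{\ljsp{r}{X,m}/k}=0$ so the image vanishes and equality holds---which is what you argue anyway.
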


Remark \ref{rmk:relativeembdim} addresses the relative setting $X/S$.

\subsection{Commutative algebra preliminaries}\label{ss:commalg}

This section is purely expository. 

The embedding dimension is usually defined only for the closed point of a local ring. We extend this definition to points $\alpha \in |Y|$ of the topological space associated to an algebraic space $Y$.

\begin{definition}\label{defn:embdim}

Let $(B, m)$ be a local ring. Recall the \textit{embedding dimension} of $(B, m)$ is the dimension of the Zariski tangent space at the closed point 
\[\embdim\left(\OO_{B}\right) = \dim_{B/m} m/m^2.\]
The embedding dimension of a scheme $Y$ at a point $\alpha \in Y$ is that of the local ring $\OO_{Y, \alpha}$. 

This number does not change under separable extensions of the field $L$ or factorizations $\alpha \to \Spec B' \to \Spec B$ with $B \to B'$ \'etale. Suppose $Y$ is an algebraic space and $\alpha \in |Y|$ is a point of $Y$ in the sense of \cite[03BT]{sta}. There is an \'etale cover $W \to Y$ by a scheme $W$ and an honest point $\beta \in W$ mapping to $\alpha$. The embedding dimension of $(W, \beta)$ is independent of choices, so we define it to be the embedding dimension of $Y$ at $\alpha$. 

\end{definition}

The Zariski tangent space of $B$ at $\alpha$ can be thought of as the normal sheaf $N_{\alpha/\Spec B}$ of Behrend and Fantechi \cite{intrinsic}. Our arguments can be rewritten in that language.

We need to compare points $\alpha_m \to \ljsp{r}{X, m}$ for different orders $m$ of jets, in particular with arcs $\alpha_\infty \to \ljsp{r}{X, \infty}$.

\begin{definition}

Let $n \geq m$ be a positive integer or infinity $n \in \NN \cup \{\infty\}$. A \emph{lift} of a point $\alpha_m \to \ljsp{r}{X, m}$ \emph{to order $n$} is a field $\alpha_n = \Spec L_n$ and a commutative diagram 
\begin{equation}\label{eqn:arcliftingjet}
\begin{tikzcd}
\alpha_n \times \ljet{r}{m} \ar[r] \ar[d]         &\alpha_n \times \ljet{r}{n} \ar[d]   \\
\alpha_m \times \ljet{r}{m} \ar[r]         &X.
\end{tikzcd}    
\end{equation}

\end{definition}

A lift to order $n$ is equivalent to a diagram 
\[
\begin{tikzcd}
\alpha_n \ar[r] \ar[d]        &\ljsp{r}{X, n} \ar[d]  \\
\alpha_m \ar[r]        &\ljsp{r}{X, m}.
\end{tikzcd}
\]
Such a lift induces lifts to all orders $k$ in between $m \leq k \leq n$.

A coherent sheaf $M$ on $\alpha_m \times \ljet{r}{m}$ is a finitely generated $L_m \adj{t}_m$-module. We describe the dimension of $M$ as an $L_m$ vector space. This dimension does not change under base change $-\otimes_{L_m} L_n$ of fields along a lift of $\alpha_m$ to order $n$, but it will change under varying the order $-\otimes_{L_m \adj{t}_m} L_n \adj{t}_n$.

Since $M$ is also a module under $L_m[t]$, the fundamental theorem of finitely generated modules over a principal ideal domain supplies a decomposition
\[M \simeq \prod_i L_m[t]/t^{e_i}.\]
The numbers $e_i$ range up to $m+1$, since $M$ is an $A_m$-module. Assume they are in decreasing order $e_{i+1} \geq e_i$. Write $d_m$ for the index of the first $e_i$ that is not equal to $m+1$, so that
\begin{equation}\label{eqn:ftfgag}
M \simeq A_m^{d_m} \times \prod_i A_m/t^{e_i}.    
\end{equation}

\begin{definition}\label{defn:bettinum}
The integer $d_m$ is called the \emph{Betti number} of $M$. 
\end{definition}

The presentation \eqref{eqn:ftfgag} exhibits $M$ as the cokernel of a diagonal matrix $A_m^{N_1} \to A_m^{N_2}$, with entries $t^{e_0}, t^{e_1}, \dots$ along the diagonal. 

The $i$th Fitting ideal $\Fitt^i M$ of the module $M$ is generated by the determinants of the $k$-minors of a presentation for $M$. The number $k$ satisfies $k + i = N_2$. The determinants of the minors are the products
\[t^{e_{i_1}} \cdots t^{e_{i_k}} = t^{\sum e_{i_j}}.\]
The generator for the ideal is the smallest such power, with exponent $E_i = \sum_{j \geq i} e_j$. This sum $E_i$ is the dimension of the remainder over $L_m$. If $E_i \geq m+1$, $t^{E_i} = 0 \in A_m$ and we have the zero ideal $(0) = (t^{m+1})$. 

The order of vanishing of the Fitting ideal is precisely this exponent, so 
\[\ord_{\alpha_m} \Fitt^i M = \min(m+1, E_i).\]
If $\alpha_m$ is an arc with $m = \infty$, the order of vanishing is exactly the sum $E_i$. 

Now suppose $\alpha_m$ has a lift to an arc $\alpha_\infty$, so we have specified lifts to each order $n \geq m$. Use analogous notation $d_n$, $e_{n, i}$, etc. Write $M_m = M$ and 
\[M_n = M \otimes_{L_m \adj{t}_m} L_n \adj{t}_n.\]

If $M$ is the pullback of a module $Q$ on $X$ to $\alpha_m \times \ljet{r}{m}$, the dimension $E_i$ of the remainder can be computed on the arc $\alpha_\infty$
\[E_i = E_{m, i} = \ord_{\alpha_\infty} \Fitt^{d_m} \left(Q|_{\alpha_\infty \times \ljet{r}{\infty}}\right).\]
One must know the $m$th Betti number $d_m$.

As $n$ increases, more of the free part $A_n^{d_n}$ in \eqref{eqn:ftfgag} becomes part of the remainder $\prod_i A_n/t^{e_i}$
\[d_m \geq d_{m+1} \geq \cdots \geq d_\infty.\]
Likewise, the sum $E_{n, i} = \sum_{j \geq i} e_{n, j}$ does not decrease as $n$ increases
\[E_{m, i} \leq E_{m+1, i} \leq \cdots \leq E_{\infty, i}.\]
These numbers stabilize as soon as $m > e_i$ for the largest torsion order $e_i = e_{d_\infty}$. In particular, it suffices to have $m > E_\infty$.

\subsection{Embedding dimension of log jet spaces}

Consider the pullback of the universal jet $\cal U = \ljsp{r}{X, m} \times \ljet{r}{m}$. 
\[\begin{tikzcd}
\alpha_m \times \ljet{r}{m} \ar[r] \ar[d, "\sigma"] \lpbstrict         &\cal U \ar[r, "f"] \ar[d, "\rho"]      &X \ar[d]    \\
\alpha_m \ar[r]        &\ljsp{r}{X, m} \ar[r]   &S
\end{tikzcd}\]
Rewrite the stalk $\lkah{\ljsp{r}{X, m}}$ at $\alpha_m$ using the formula for the log K\"ahler differentials of Proposition \ref{prop:logtansplogjetsp}
\[\lkah{\ljsp{r}{X, m}}|_{\alpha_m} = \rho_*(\lkah{X}|_{\cal U} \otimes \omega)|_{\alpha_m} = \sigma_*(\lkah{X}|_{\alpha_m \times \ljet{r}{m}} \otimes \omega) \simeq \sigma_*(\lkah{X}|_{\alpha_m \times \ljet{r}{m}}).\]
We now compute the dimension of this $L_m$-vector space.

Apply Subsection \S \ref{ss:commalg} to the module $M = \lkah{X}|_{\alpha_m \times \ljet{r}{m}}$. Taking dimensions as $L_m$ vector spaces $\dim_{L_m}-$ of Equation \eqref{eqn:ftfgag} yields

\begin{corollary}

Suppose the jet $\alpha_m$ lifts to an arc $\alpha_\infty$. The dimension of the stalk of $\lkah{\ljsp{r}{X, m}}$ at ${\alpha_m}$ is
\begin{equation}\label{eqn:ordfittingjacobian}
\begin{split}
\dim_{L_m} \left(\lkah{\ljsp{r}{X, m}}|_{{\alpha_m}}\right) &= \dim_{L_m} \left(\sigma_* \lkah{X}|_{{\alpha_m} \times \ljet{r}{m}}\right) \\
&= d_m \cdot (m+1) + \ord_{{\alpha_\infty}} \left(\Fitt^{d_m}(\lkah{X}|_{\alpha_\infty \times \ljet{r}{\infty}})\right).   
\end{split}
\end{equation}
The number $d_m$ is the Betti number of $\lkah{X}|_{\alpha_m \times \ljet{r}{m}}$. 

\end{corollary}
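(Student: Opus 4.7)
The plan is to combine the formula from Proposition \ref{prop:logtansplogjetsp} with the structure theorem for finitely generated modules over the PID $L_m[t]$ already recalled in Subsection \S\ref{ss:commalg}, and then verify that the truncated data at level $m$ is recorded by a Fitting ideal at the arc level.

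First, I would instantiate Proposition \ref{prop:logtansplogjetsp} with $S'\to S$ the universal jet $\Spec \ZZ\adj{t}_m \to \Spec \ZZ$, for which $\omega$ is globally trivial (as noted just before that proposition). Restricting the resulting identity $\lkah{\ljsp{r}{X,m}} \simeq \rho_*\bigl(\lkah{X}|_{\cal U} \otimes \omega\bigr)$ to the point $\alpha_m$ and using flat base change along the strict square identifying $\alpha_m \times \ljet{r}{m}$ with $\cal U \times_{\ljsp{r}{X,m}} \alpha_m$ yields the first equality
\[\lkah{\ljsp{r}{X,m}}|_{\alpha_m} \;\simeq\; \sigma_*\bigl(\lkah{X}|_{\alpha_m \times \ljet{r}{m}}\bigr).\]
Writing $M \coloneqq \lkah{X}|_{\alpha_m \times \ljet{r}{m}}$, the pushforward $\sigma_* M$ is simply $M$ regarded as an $L_m$-vector space, so it only remains to compute $\dim_{L_m} M$.

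Next I would apply the decomposition $M \simeq A_m^{d_m} \times \prod_{i \geq d_m} A_m/t^{e_{m,i}}$ recalled in \eqref{eqn:ftfgag} (with $A_m = L_m\adj{t}_m$, via the $L_m[t]$-structure). Since $\dim_{L_m} A_m = m+1$ and $\dim_{L_m} A_m/t^{e_{m,i}} = e_{m,i}$, this gives
\[\dim_{L_m} M \;=\; d_m(m+1) \;+\; \sum_{i \geq d_m} e_{m,i}.\]
It remains to identify the torsion sum with $\ord_{\alpha_\infty}\Fitt^{d_m}\bigl(\lkah{X}|_{\alpha_\infty \times \ljet{r}{\infty}}\bigr)$.

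For this final step I would use the specified lift $\alpha_\infty$ and the base change $M_m = M_\infty \otimes_{A_\infty} A_m$. Decomposing $M_\infty = A_\infty^{d_\infty} \times \prod_{j \geq d_\infty} A_\infty/t^{e_{\infty,j}}$ and reducing mod $t^{m+1}$, the torsion summands with $e_{\infty,j} \leq m$ survive unchanged while those with $e_{\infty,j} > m$ become free summands $A_m$; consequently the torsion orders at level $m$ are precisely the $e_{\infty,j}$ with $e_{\infty,j} \leq m$, and $d_m = d_\infty + \#\{j : e_{\infty,j} > m\}$. In particular, since the $e_{\infty,j}$ are listed in decreasing order, the torsion orders at level $m$ are exactly $\{e_{\infty,j} : j \geq d_m\}$, and hence
\[\sum_{i \geq d_m} e_{m,i} \;=\; \sum_{j \geq d_m} e_{\infty,j} \;=\; E_{\infty,d_m} \;=\; \ord_{\alpha_\infty}\Fitt^{d_m}(M_\infty),\]
where the last equality is the Fitting-ideal computation already carried out in \S\ref{ss:commalg} (the minors vanish unless one chooses all torsion columns, giving the single generator $t^{E_{\infty,d_m}}$, and at the arc this order is not truncated). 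Combining the two displayed equalities gives the claim.

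The only slightly delicate point is this bookkeeping between level $m$ and level $\infty$; everything else is formal from the already-established pushforward formula for $\lkah{\ljsp{r}{X,m}}$ and the PID structure theorem.
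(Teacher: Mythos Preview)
Your proposal is correct and follows essentially the same approach as the paper: the first equality is obtained from Proposition \ref{prop:logtansplogjetsp} together with base change and the triviality of $\omega$ (this is stated in the paragraph immediately preceding the corollary), and the second equality is obtained by applying the structure theorem and Fitting-ideal computation from \S\ref{ss:commalg} to $M = \lkah{X}|_{\alpha_m \times \ljet{r}{m}}$. Your explicit bookkeeping relating the torsion orders at level $m$ to those at level $\infty$ simply unpacks the claim $E_i = \ord_{\alpha_\infty}\Fitt^{d_m}(Q|_{\alpha_\infty \times \ljet{r}{\infty}})$ already recorded in that subsection.
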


Let $J \to \ljsp{r}{X, m}$ be a strict étale cover with $\alpha \in J$. We can freely replace $\alpha_m$ by finite separable field extensions. Write $D = \bar {\{\alpha_m\}} \subseteq J$ for the closure of the point $\alpha_m$ with the restriction of the log structure of $J$. Let $I$ be the ideal sheaf of $D \subseteq J$. 

We have a long exact sequence for the triple $D \to J \to \Spec k$, which we pull back to $\alpha_m \in D$
\begin{equation}\label{eqn:leslogcotcplx}
\cdots \to h^{-1} \lccx{{\alpha_m}} \to I/I^2 \to \lkah{\ljsp{r}{X, m}}|_{{\alpha_m}} \to \lkah{{\alpha_m}} \to 0.    
\end{equation}
as in Remark \ref{rmk:leslogcotcplx}. If $X$ is log smooth, $J$ is as well and the sequence does not continue to the left.

This bounds the embedding dimension $\dim_{L_m} I/I^2$ of $\alpha_m \in J$.
\begin{equation}\label{eqn:embdim1}
\embdim \left(\OO_{J, {\alpha_m}}\right) \leq \dim \lkah{J}|_{{\alpha_m}} - \dim \lkah{{\alpha_m}} + \dim h^{-1} \lccx{{\alpha_m}}    
\end{equation}

The terms in the sequence \eqref{eqn:leslogcotcplx} are all stable under finite separable field extension. We can then assume the log structure of ${\alpha_m}$ is the constant sheaf $M_{\alpha_m} = L_m^* \times Q$, with $\underline Q = \bar M_{\alpha_m}$.

\begin{proof}[Proof of Theorem \ref{thm:embdim}]

The number $N$ in the formula \eqref{eqn:embdimthm} is the number of irreducible elements $\#(Q^+ \setminus Q^{+2})$. Rewrite \eqref{eqn:embdim1} using Lemma \ref{lem:eulerchar2termlogcot}. 

\end{proof}

We can obtain this theorem for relative log jets $\ljsp{r}{X/S, m}$ under more restrictive hypotheses.

\begin{remark}[Embedding dimension of relative log jets]\label{rmk:relativeembdim}

Let $X \to S$ be log smooth and finite type. Suppose $\alpha_m \to S$ is either log smooth or that the underlying map of schemes $\alpha_m^\circ \to S^\circ$ is smooth. One can formulate a weaker relative version of the formula \eqref{eqn:embdimthm} for a point of the relative log jet space $\alpha_m \to \ljsp{r}{X/S, m}$. One can immediately relativize \eqref{eqn:ordfittingjacobian} and \eqref{eqn:leslogcotcplx}. The tricky part is Lemma \ref{lem:eulerchar2termlogcot}.

Write $\beta$ for the point ${\alpha_m}$ endowed with the log structure of $S$. By our assumptions on $\alpha_m \to S$, either it is log smooth or $\beta \to S$ is. In either case, the triple ${\alpha_m} \to \beta \to S$ leads to an exact sequence
\begin{equation}\label{eqn:rellogcotlogpoint}
0 \to h^{-1} \lccx{{\alpha_m}/S} \to h^{-1} \lccx{{\alpha_m}/\beta} \to \lkah{\beta/S} \to \lkah{{\alpha_m}/S} \to \lkah{{\alpha_m}/\beta} \to 0.    
\end{equation}
This provides a formula
\[\dim \lkah{{\alpha_m}/S} - \dim h^{-1} \lccx{{\alpha_m}/S} = \dim \lkah{{\alpha_m}/\beta} - \dim h^{-1} \lccx{{\alpha_m}/\beta} + \dim \lkah{\beta/S}.\]
Since $\beta/S$ is strict, $\lkah{\beta/S} = \kah{\beta^\circ/S^\circ} = \kah{\alpha_m^\circ/S^\circ}$.

To compute $\lkah{{\alpha_m}/\beta}$ and $h^{-1} \lccx{{\alpha_m}/\beta}$, consider the exact sequence for ${\alpha_m} \to \beta \to \beta^\circ = \alpha_m^\circ$.
\[
\cdots \to h^{-1} \lccx{\beta/\beta^\circ} \to h^{-1} \lccx{{\alpha_m}/\beta^\circ} \to h^{-1} \lccx{{\alpha_m}/\beta} \to \lkah{\beta/\beta^\circ} \to \lkah{{\alpha_m}/\beta^\circ} \to \lkah{{\alpha_m}/\beta} \to 0
\]
After separable field extension and applying the calculations in the absolute case, we get a long exact sequence
\[
\cdots \to L_m^{\#(R^+ \setminus R^{+2})} \to L_m^{\#(Q^+ \setminus Q^{+2})} \to h^{-1} \lccx{{\alpha_m}/\beta} \to L_m^{\rk \gp R} \to L_m^{\rk \gp Q} \to \lkah{{\alpha_m}/\beta} \to 0
\]
of $L_m$-vector spaces. Here $Q$ and $R$ are the characteristic monoids of ${\alpha_m}, \beta$ respectively. This shows the log K\"ahler differentials are the relative characteristic monoid $\lkah{{\alpha_m}/\beta} = \gp {\bar M_{{\alpha_m}/S}}$ and $h^{-1} \lccx{{\alpha_m}/\beta}$ is generated by the kernel of $\gp R \to \gp Q$ and the irreducible elements of $Q^+$ not in the image of irreducible elements of $R^+$.

Combining the above, one gets
\begin{equation}\label{eqn:relativeembdimthm}
\begin{split}
\embdim\left(\OO_{\ljsp{r}{X/S, m}, \alpha_m}\right) = &d_m(m+1) + \ord_{\alpha_\infty} \left(\Fitt^{d_m}(\lkah{X/S}|_{\alpha_\infty \times \ljet{r}{\infty}})\right)    \\ 
&- \rk \gp{\bar M}_{\alpha_m/S} + N - \dim \kah{\alpha_m^\circ/S^\circ}.
\end{split}
\end{equation}
Here $N$ is the sum of the number of irreducible elements of $Q^+$ not in the image of those of $R^+$ with the rank of the kernel of $\gp R \to \gp Q$.

One could demand weaker conditions than we have for $\alpha_m \to S$ at the cost of a longer sequence than \eqref{eqn:rellogcotlogpoint}, for example that ${\alpha_m} \to \beta$ is a ``log locally complete intersection.'' The above form suffices for cases where $S$ is a log point, such as the log smooth nodal curves $C \to S$ discussed in Example \ref{ex:nodalcurve}.

\end{remark}

The usual formula for the embedding dimension of a point ${\alpha_m} \in \jsp{X, m}$ that lifts to an arc is
\[\embdim\left(\OO_{\jsp{X, m}, {\alpha_m}}\right) = d_m(m+1) - \dim \alpha_m^\circ + \ord_{\alpha_\infty} \Fitt^{d_m} \kah{X}.\]
The log version \eqref{eqn:embdimthm} differs in a few ways. Log K\"ahler differentials $\lkah{X}$ are used and there are correction terms $N, \rk \gp {\bar M}_{\alpha_m}$ coming from $\lccx{\alpha/\alpha^\circ}$ by Lemma \ref{lem:eulerchar2termlogcot}. Furthermore, $X$ is required to be log smooth.

Formula \eqref{eqn:embdimthm} is as far as we could push the results of \cite{arcspacedifflsdefernexdocampo} in the log setting. We elaborate on the obstructions to obtaining their theorems and the differences between log jets and ordinary ones.

\begin{remark}

The equality of the embedding dimension with the jet codimension obtained in \cite[Theorem C]{arcspacedifflsdefernexdocampo} does not immediately carry over to log jet spaces. 

Assume $\alpha_n \to \alpha_m$ is a lift that is strict. The same argument as in Lemma 8.3 of loc. cit. in similar notation shows
\[\dim_{L_\infty} ({\rm Im} \lambda_{m, n}) \geq d (m+1) - \dim \alpha_m^\circ + N - \rk \gp{\bar M}_{\alpha_m}.\]
This differs from the quantity
\[d(m+1) - \dim \lkah{\alpha}\]
that one might hope would be a suitable ``log jet codimension.'' There is a correction term. 

\end{remark}

The article \cite{arcspacedifflsdefernexdocampo} goes on to obtain a numerical version of the birational transformation rule. This rule fails in log geometry.

\begin{remark}[The birational transformation rule]

The birational transformation rule \cite[Theorem pg. 18]{devlinmallorymotivicintegrationnotes} does not easily extend to log jet spaces. We defer to future work on ``log motivic integration'' and merely comment on geometric avatars of the theorem here. 

Suppose $f : X \to Y$ is a \emph{strict} morphism of fs log schemes which is an isomorphism away from a closed set $Z \subseteq Y$. Write $W$ for the pullback $Z \times_Y X$. One can prove as in the ordinary case \cite[Theorem pg. 17]{devlinmallorymotivicintegrationnotes} that the map $f$ induces an isomorphism
\[\ljsp{r}{X, \infty} \setminus \ljsp{r}{W, \infty} \longsimeq \ljsp{r}{Y, \infty} \setminus \ljsp{r}{Z, \infty}.\]

In general, $f$ will not induce such an isomorphism even if the map is birational. The map $\Aff^1 \to (\Aff^1)^\circ$ is the identity on underlying schemes, but the log jet space of the former has infinitely many more components. 

\end{remark}

\begin{remark}
The computation of the embedding dimension can be done for other Weil restrictions. When $S'/S$ consist of curves or germs of curves, an order of vanishing expression \eqref{eqn:ordfittingjacobian} holds as well. 
\end{remark}

\section{The case $r = 0$}\label{s:r=0}

When $r = 0$, the chart $\NN \to k \adj{t}_m$ sends $1 \in \NN$ to $t^0 = 1$. Since the copy of $\NN$ maps to a unit, it is erased upon taking the associated log structure. The log structure of the log jets for $r = 0$ is the trivial one
\[\Gamma(M_{\ljet{0}{m}}) = (k \adj{t}_m)^*.\]
That is, $\ljet{0}{m} = (\jet{m})^\circ$. 

This trivial log structure is initial, so there are maps
\[\ljet{r}{m} \to \ljet{0}{m}\]
inducing maps on jet spaces
\[\ljsp{0}{X, m} \to \ljsp{r}{X, m}.\]
The $r = 0$ jet space is initial among all possible ``log jet spaces.'' 

The jet space $r = 0$ is the only one that maps directly to $X$ because $\ljsp{0}{X, 0} = X$. The truncation maps are compatible with the maps between different jet spaces. 

\begin{example}

If $r = 0$ and $m = 1$, the relative log jet space $\ljsp{0}{X/S, m}$ is the log tangent space $\Tl{X/S}$ by Remark \ref{rmk:logtanspfunctorofpoints}. This is in contrast with the case $r > 0$ developed in this article. 

\end{example}

This space is the log jet space of S. Dutter \cite{sethdutterthesislogjets} and B. Fleming \cite{balinthesis}. Fleming defines a scheme without log structure, taking as input ordinary $X$-schemes $T$ and endowing them with the final log structure $M_T = \OO_T$. The jets $\jet{m}$ are endowed with the initial log structure, and the functor of points is defined as usual
\[\ljsp{0}{X, m}(T) \coloneqq \Hom((T, \OO_T) \times \jet{m}^\circ, X).\]

\begin{remark}

By extending this functor of points to arbitrary fs log schemes $T$, we get $\ljsp{0}{X, m}$. Restricting to final log structures $M_T = \OO_T$ on $X$-schemes $T$ is equivalent to endowing the non-fs version $\ljspnonfs{0}{X, m}$ of $\ljsp{0}{X, m}$ with log structure pulled back from $X$ \cite[Remark III.5]{balinthesis}. Both approaches make the map on log structures vacuous. It is thus more accurate to say their jet space is the underlying scheme $\left(\ljspnonfs{0}{X, m}\right)^\circ$ with log structure pulled back from $X$. 

\end{remark}

Suppose $T = \Spec A$ for simplicity. The log structure on $T \times \ljet{0}{m}$ is the pushout
\[M_{T \times \ljet{0}{m}} = A \oplus_{A^*} (A \adj{t}_m)^*.\]
Fleming points out that this monoid may be described as $A \times (1 + t A \adj{t}_m)$ by pushing the units onto the left factor.

If $X \to Y$ is log \'etale, there is a simpler pullback square for $r = 0$
\[\begin{tikzcd}
\ljsp{0}{X, m+1} \ar[r] \ar[d] \lpb      &\ljsp{0}{Y, m+1} \ar[d]         \\
\ljsp{0}{X, m} \ar[r] \ar[d] \lpb      &\ljsp{0}{Y, m} \ar[d]         \\
X \ar[r]       &Y.
\end{tikzcd}\]

\begin{example}[Log jets of $\Aff^1$]

As before, the truncation maps $\ljsp{0}{\Aff^1, m+1} \to \ljsp{0}{\Aff^1, m}$ are $\left(\Aff^1\right)^\circ$-bundles. The base case $m=0$ is the original space $\ljsp{0}{\Aff^1, 0} = \Aff^1$, so the log jets for $r = 0$ are
\[\ljsp{0}{\Aff^1, m} = \left(\Aff^m\right)^\circ \times \Aff^1.\]

\end{example}

\begin{remark}

Compared to the arc space, less is known about the wedge space $\jsp{\jsp{X, \infty}, \infty} = \HHom(\jet{\infty} \times \jet{\infty}, X)$. For example, is there a birational transformation rule? B. Fleming asks \cite[Remark III.42]{balinthesis} whether log jets will help understand what happens to these for log \'etale maps. 

Let $X \to Y$ be log \'etale. Applying Lemma \ref{lem:evspmaplogetmonom} twice, we see that
\[\ev \ev X \to \ev \ev Y\]
is log \'etale. The pullback square
\[\begin{tikzcd}
\ljsp{r}{\ljsp{r}{X, \infty}, \infty} \ar[r, "f_{\infty, \infty}"] \ar[d] \lpb       &\ljsp{r}{\ljsp{r}{X, \infty}, \infty} \ar[d]      \\
\ljsp{r}{\ev X} \ar[r] \ar[d] \lpb      &\ljsp{r}{\ev Y} \ar[d]         \\
\ev \ev X \ar[r]       &\ev \ev Y
\end{tikzcd}\]
shows the induced map on wedge spaces is log \'etale also. Note the double evaluation space is the evaluation space for $\NN^2$ mentioned in Remark \ref{rmk:Qevsp} $\ev \ev - = \ev_{\NN^2} -$. 

In Fleming's case $r = 0$, any other properties of $X \to Y$ stable under pullback also hold for the map $f_{\infty, \infty}$ on wedge spaces. Log modifications lead to log modifications of wedge spaces. This birational transformation rule for log wedge spaces is in the spirit of Fleming's question. 

\end{remark}

A major difference between the log jets is how those with $r = 0$ restrict along components. Let $X$ be an fs log scheme and write $X_{j}$ for the locally closed stratum where the log structure of $X$ has rank $j$. As shown in \cite[Lemma 3.2]{karustaallogjets} (and \cite[Proposition III.12]{balinthesis} for positive characteristic), the maps 
\[\ljsp{0}{X, m} \to \jsp{X^\circ, m}\]
restrict over the strata $X_j \subseteq X$ to affine bundles of varying dimension $mj$. 

The reason is that the completions of the diagram 
\[
\begin{tikzcd}
A \times (1 + t A \adj{t}_m) \ar[d]        &Q \ar[l, dashed] \ar[d]     \\
A \adj{t}_m         &R \ar[l]
\end{tikzcd}
\]
can be identified with group homomorphisms $\gp Q \to (1 + t A \adj{t}_m)$. Here $Q \to R$ is a chart for the log structure of $X$ and the left hand side is an $A$-valued log jet. The log structures $M_{\Spec A \times \ljet{r}{m}}$ for $r \neq 0$ are not given by $A \times (1 + t A \adj{t}_m)$, so this restriction formula does not occur.

We conclude the section with analogues of our main results, proved the same way. Note that $\ev X$ must be replaced with $X$ itself in the proofs.

\begin{theorem}

Let $X$ be an fs log scheme. 

\begin{itemize}
    \item
The log jets $\ljsp{0}{X, m}$ with $r = 0$ are representable by a log algebraic space. If $X$ admits charts Zariski locally, they are representable by a log scheme. 

    \item If $X$ is log smooth, the truncation maps $\ljsp{0}{X, m+1} \to \ljsp{0}{X, m}$ are affine bundles. 

    \item Bhatt's theorem holds for $r = 0$
\[\ljsp{0}{X, \infty} = \lim_m \ljsp{0}{X, m}\]

    \item The log tangent space of the log jets with $r=0$ is again the Weil restriction
    \[\Tl{\ljsp{0}{X/S, m}/S} = \WR_{\cal U/\ljsp{0}{X/S, m}} \Tl{X/S}|_{\cal U}\]
    
    The formula for the log K\"ahler differentials holds as well
    \[\lkah{\ljsp{0}{X, m}/S} = \rho_*\left(\lkah{X/S}|_\cal U \otimes \omega \right).\]

\end{itemize}

\end{theorem}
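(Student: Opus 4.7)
The plan is to run the arguments in parallel with the $r > 0$ cases already treated, noting that $\ljet{0}{m}$ carries trivial log structure so that $\ljsp{0}{X, 0} = X$ itself; everywhere the evaluation space $\ev X$ appeared in the earlier proofs it is simply replaced by the bare $X$, and the arguments streamline accordingly. Before doing anything else I would state the $r = 0$ variant of Lemma \ref{lem:letpb} explicitly: for $X \to Y$ log étale, the truncation maps fit into cartesian squares $\ljsp{0}{X, m+1} \to \ljsp{0}{Y, m+1} \times^\ell_{\ljsp{0}{Y, m}} \ljsp{0}{X, m}$ with base case $X \to Y$ at $m = 0$. The proof is the same functor-of-points chase as in Lemma \ref{lem:letpb}, only simpler, since $\ljsp{0}{-, 0}$ is the identity functor.

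For representability I would mimic Theorem \ref{thm:repability}. The manuscript already exhibits $\ljsp{0}{\Aff^1, m} = (\Aff^m)^\circ \times \Aff^1$, and preservation of fs fiber products gives $\ljsp{0}{\Aff^n, m}$. A strict map $X \to \Aff^n$ produces a pullback square with ordinary jet spaces, giving representability for $X$ with such a chart. Strict étale descent along covers $U \to X$ and the equalizer construction from the proof of Theorem \ref{thm:repability} (using a coequalizer presentation $\NN^a \rightrightarrows \NN^b \to Q$ of the chart monoid) then yield representability for general $X$ by an fs log algebraic space, and by an fs log scheme when $X$ admits Zariski charts. For the affine-bundle statement, log smoothness supplies strict étale-local log étale maps $X \to \Aff^n$; the variant of Lemma \ref{lem:letpb} just stated pulls back the trivialization on $\Aff^n$ coming from $\ljsp{0}{\Aff^n, m} = (\Aff^{nm})^\circ \times \Aff^n$.

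For Bhatt's theorem I would follow Proposition \ref{prop:bhattthm}: strict tropicalization $X \to \af[X]$ gives, for each $m \in \NN \cup \{\infty\}$, a pullback square
\[
\begin{tikzcd}
\ljsp{0}{X, m} \ar[r] \ar[d] \lpbstrict    &\ljsp{0}{\af[X], m} \ar[d] \\
\jsp{X^\circ, m} \ar[r]     &\jsp{\af[X]^\circ, m}.
\end{tikzcd}
\]
Since $\af[X] \to \Spec k$ is log étale and $\ljsp{0}{\Spec k, m} = \Spec k$, the $r = 0$ variant of Lemma \ref{lem:letpb} identifies $\ljsp{0}{\af[X], m}$ with $\af[X]$ itself, independently of $m$. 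Taking the limit over $m$, classical Bhatt \cite{bhatttannakaalgn, bhatttanaka2} handles the lower row, the upper-right vertex is already $\af[X]$, and the $m = \infty$ instance of the same square forces $\ljsp{0}{X, \infty} = \lim_m \ljsp{0}{X, m}$.

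The formulas for the log tangent space and the log Kähler differentials are immediate from Proposition \ref{prop:logtansplogjetsp} once one recognizes $\ljsp{0}{X/S, m}$ as the log Weil restriction $\WR^\ell_{S \times \ljet{0}{m}/S}$ applied to $X \times \ljet{0}{m}$; the dualizing sheaf $\omega$ depends only on the underlying scheme map $\jet{m} \to \Spec k$ and is therefore the same as in the $r > 0$ case. The only real bookkeeping obstacle is the preliminary lemma mentioned at the start, together with the standard check that $T \times^\ell \ljet{0}{m}$ agrees with $T \times \jet{m}$ on underlying schemes (which follows because $\ljet{0}{m} \to \Spec k$ is strict, hence trivially integral and saturated). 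Beyond these, no genuine new obstacle appears: the $r = 0$ case is uniformly simpler than the $r > 0$ case already treated, because the evaluation space is absent from every diagram.
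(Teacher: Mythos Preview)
Your proposal is correct and follows essentially the same approach as the paper, which simply states that the results are ``proved the same way'' with $\ev X$ replaced by $X$ throughout. You have in fact written out considerably more detail than the paper provides, correctly identifying the key simplifications (that $\ljsp{0}{-, 0}$ is the identity functor, that $\ljsp{0}{\af[X], m} = \af[X]$ replaces $\ev \af[X]$ in the Bhatt argument, and that $\ljet{0}{m} \to \Spec k$ is strict so the log Weil restriction hypotheses are trivially satisfied).
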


\appendix

\section{Jet spaces of stacks}

The representability of hom-stacks \cite[Theorem 3.7(iii)]{rydhrepability} shows the jet spaces of algebraic stacks are algebraic. We check this directly and obtain representability for log jet spaces of log algebraic stacks as a corollary. 

Fix the order of jets $m \in \NN$ for this section; we do not consider arcs here for simplicity. Bhatt gives a stacky example where arcs differ from the limit of jet spaces \cite{bhatttannakaalgn}. 

Jet spaces were defined for Artin $N$-stacks in \cite{chetanbalwejetspacesforartinnstacks}, while  \cite{nlab:higherjetspaces} offers further refinements. We redefine them concretely here due to the authors' inexperience with higher stacks. We do not allow stack structure on the jets $\Delta_m$ themselves as in \cite{yasuda2003motivic}.

\begin{definition}\label{def:jetspacesofstacks}
Let $X$ be a stack over affine schemes over $\Spec k$ with the \'etale topology. The \textit{jet space} of $X$ is the fibered category
\[\jsp{X} : \Spec A \mapsto \Hom(\Spec A \times \Delta_m, X).\]
This fibered category maps to $X$ via precomposing by $\Spec A \times \Delta_m \to \Spec A$. This construction is 2-functorial in $X$
\[f : X \to Y \quad \quad \overset{J}{\mapsto} \quad \quad \jsp{f} : \jsp{X} \to \jsp{Y},\]
and we have commutative squares
\[\begin{tikzcd}
\jsp{X} \ar[r]\ar[d]        &\jsp{Y}  \ar[d]      \\
X \ar[r]       &Y.
\end{tikzcd} \]   
\end{definition}

We will show the 2-functor $J$ in Definition \ref{def:jetspacesofstacks} sends Artin stacks to Artin stacks. This section is an exercise in the definitions. 

\begin{remark}\label{rmk:jetspacesareastack}
An \'etale cover of affines $\{\Spec B_i \to \Spec A\}$ gives rise to an \'etale cover $\{\Spec B_i \adj{t}_m \to \Spec A \adj{t}_m\}$ by pullback, so the stack axioms of $\jsp{X}$ reduce to those of $X$. That is, $J_-$ sends stacks to stacks. The same argument would work in any other topology. 

This same argument would not work for $m = \infty$ even for the Zariski topology, since
\[A \stquot{\dfrac{1}{f}} \adj{t} \neq A \adj{t} \stquot{\dfrac{1}{f}}.\]
Elements like $\sum \dfrac{1}{f^i} t^i$ belong to the former but not the latter. 

\end{remark}

\begin{lemma}\label{lem:difflcriteriajetspacespresentation}
Suppose $f : X \to Y$ is a morphism of stacks over schemes in the \'etale topology which is representable and locally finite presentation. If $f$ is smooth, then $\jsp{f} : \jsp{X} \to \jsp{Y}$ is smooth and $\jsp{X} \to \jsp{Y} \times_Y X$ is a surjection. If $f$ is a smooth surjection, so is $\jsp{f}$. 

If instead $f$ is unramified, then $\jsp{X} \to \jsp{Y} \times_Y X$ is a monomorphism. If $f$ is \'etale, $\jsp{X} \to \jsp{Y} \times_Y X$ is an isomorphism. 
\end{lemma}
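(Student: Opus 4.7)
The plan is to reduce all four claims to iterated applications of the infinitesimal lifting criteria for $f$, exploiting the fact that $\Delta_m = \Spec k[t]/(t^{m+1})$ is a nilpotent thickening of a point. First I would unpack the fiber product: an $\Spec A$-point of $\jsp{Y} \times_Y X$ is a pair $(\eta, x)$ consisting of a jet $\eta \colon \Spec A \times \Delta_m \to Y$ and a section $x \colon \Spec A \to X$ lifting the base point $\eta|_{\Spec A}$, while the comparison map sends $\xi \colon \Spec A \times \Delta_m \to X$ to $(f\xi, \xi|_{\Spec A})$. Since $f$ is representable and locally of finite presentation, the comparison map $\jsp{X} \to \jsp{Y} \times_Y X$ is itself representable (its fiber over a pair $(\eta, x)$ is a scheme of sections of $X \times_Y (\Spec A \times \Delta_m) \to \Spec A \times \Delta_m$ extending a given section on $\Spec A$), so it suffices to verify each claim on schemes.

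The technical core of the argument is the observation that the closed immersion $\Spec A \hookrightarrow \Spec A \times \Delta_m$ factors as a tower of square-zero closed immersions
\[\Spec A \hookrightarrow \Spec A[t]/(t^2) \hookrightarrow \cdots \hookrightarrow \Spec A[t]/(t^{m+1}),\]
and that for any square-zero thickening $\Spec A \hookrightarrow \Spec A'$ with ideal $I$, the product $\Spec A \times \Delta_m \hookrightarrow \Spec A' \times \Delta_m$ is again square-zero, with ideal $I \otimes_k k[t]/(t^{m+1})$. This reduces all questions about extending or comparing lifts along these thickenings to repeated applications of the infinitesimal lifting criteria for $f$ along genuine square-zero extensions.

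Granted this, each statement becomes routine. Surjectivity of $\jsp{X} \to \jsp{Y} \times_Y X$ when $f$ is smooth is iterated formal smoothness along the tower above. Smoothness of $\jsp{f}$ follows from the lifting criterion applied to a square-zero thickening $\Spec A \hookrightarrow \Spec A'$ via the induced thickening $\Spec A \times \Delta_m \hookrightarrow \Spec A' \times \Delta_m$, with local finite presentation inherited from that of $f$ (here finiteness of $m$ is essential). Surjectivity of $\jsp{f}$ when $f$ is a smooth surjection follows because the base point of any jet to $Y$ lifts étale-locally on $\Spec A$ to $X$, after which the jet itself lifts by the previous step. The monomorphism claim for $f$ unramified uses formal unramifiedness (uniqueness of square-zero lifts) along the same tower to show that the schematic fibers of the comparison map are subsingletons. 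The étale case combines this with the smooth surjectivity statement to give an isomorphism.

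The main obstacle I anticipate is formal rather than substantive: carefully verifying that the comparison map $\jsp{X} \to \jsp{Y} \times_Y X$ is representable, so that properties such as \emph{monomorphism} and \emph{isomorphism} can be tested on schematic fibers and the groupoid structure of $\jsp{X}$ does not interfere. Once representability is in hand, each claim reduces to a diagram chase with the tower of square-zero extensions and the standard infinitesimal lifting criteria.
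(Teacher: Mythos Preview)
Your proposal is correct and follows essentially the same strategy as the paper: both reduce everything to the infinitesimal lifting criteria for $f$, applied to the nilpotent thickening $\Spec A \hookrightarrow \Spec A\adj{t}_m$ (which you refine into an explicit square-zero tower) and to the square-zero thickening $\Spec A\adj{t}_m \hookrightarrow \Spec A'\adj{t}_m$ for smoothness of $\jsp{f}$. The paper's proof is terser and does not pause over the representability concern you flag at the end, but the content is the same.
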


\begin{proof}

Suppose given a squarezero extension $A' \to A$ of algebras endowed with either of these equivalent commutative squares
\[\begin{tikzcd}
\Spec A \ar[r]  \ar[d]    &\jsp{X} \ar[d]        &      &\Spec A\adj{t}_m \ar[r] \ar[d]       &X \ar[d]      \\
\Spec A' \ar[r] \ar[ur, dashed]        &\jsp{Y}     &       &\Spec A'\adj{t}_m \ar[r] \ar[ur, dashed]        &Y.
\end{tikzcd}\]
Then the dashed arrow on the right exists by smoothness, which verifies the lifting property for $X \to Y$ \cite[02GZ]{sta}. 

If $f$ is instead a smooth surjection, the map $\jsp{f}$ factors as $\jsp{X} \to \jsp{Y} \times_Y X \to \jsp{Y}$. A similar lifting diagram argument verifies $\jsp{X} \to \jsp{Y} \times_Y X$ is surjective, and surjectivity of $\jsp{Y} \times_Y X \to \jsp{Y}$ is immediate. 

The statement about unramified $f$ comes from a similar diagram 
\[\begin{tikzcd}
\Spec A \ar[r] \ar[d]         &X \ar[d]      \\
\Spec A \adj{t}_m \ar[r] \ar[ur, dashed, shift left] \ar[ur, shift right, dashed]       &Y.
\end{tikzcd}\]
The case of \'etale $f$ results from the other cases \cite[025G]{sta}.

\end{proof}

If $f$ is merely surjective, $\jsp{f}$ may not be. Any nilpotent immersion should serve as a counterexample, such as $f : \Spec \CC \in \Spec \CC[x]/x^2$. The lemma holds in greater generality, as it appeals only to formal criteria.

\begin{corollary}
The map $\jsp{X} \to X$ is representable by algebraic spaces if $X$ is a DM stack. 
\end{corollary}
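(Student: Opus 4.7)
The plan is to test the claim against an arbitrary scheme $T$ with a morphism $T \to X$ and show $\jsp{X} \times_X T$ is an algebraic space, by supplying (i) an \'etale cover by a scheme and (ii) a monomorphic diagonal over $T$. The key inputs are Lemma \ref{lem:difflcriteriajetspacespresentation} (which says $\jsp{-}$ preserves \'etaleness of representable maps) and the definition of DM, namely that $\Delta_X$ is unramified.

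First I would choose an \'etale scheme atlas $p : U \to X$ provided by the DM hypothesis. Set $U_T := U \times_X T$, which is an \'etale cover of $T$ by a scheme. Applying Lemma \ref{lem:difflcriteriajetspacespresentation} to the \'etale morphism $p$ gives an isomorphism $\jsp{U} \simeq \jsp{X} \times_X U$, and $\jsp{U}$ is representable by a scheme since $U$ is a scheme. Base-changing along $U_T \to U$ yields
\[
\jsp{X} \times_X U_T \;=\; \jsp{U} \times_U U_T,
\]
a scheme, which maps \'etale-surjectively to $\jsp{X} \times_X T$ (being the base change of $p$). So $\jsp{X} \times_X T$ admits an \'etale cover by a scheme.

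For (ii), it suffices to show that the relative diagonal $\Delta : \jsp{X} \to \jsp{X} \times_X \jsp{X}$ is a monomorphism of stacks, since monomorphisms are stable under base change and $(\jsp{X} \times_X T) \times_T (\jsp{X} \times_X T) = (\jsp{X} \times_X \jsp{X}) \times_X T$. Concretely, given $\phi_1, \phi_2 \in \jsp{X}(T) = X(T \times \Delta_m)$ and two $2$-isomorphisms $\alpha_1, \alpha_2 : \phi_1 \Rightarrow \phi_2$ in $X(T \times \Delta_m)$ whose restrictions to $T$ agree, I want to conclude $\alpha_1 = \alpha_2$. Since $X$ is DM, the isom-sheaf $\Isom_X(\phi_1, \phi_2) \to T \times \Delta_m$ is unramified. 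The closed immersion $T \hookrightarrow T \times \Delta_m$ is a nilpotent thickening, and sections of an unramified morphism through a nilpotent thickening inject into sections over the thickened subscheme by the topological invariance of the \'etale site \cite[04DZ]{sta}; this forces $\alpha_1 = \alpha_2$.

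No step involves a serious computation; the main conceptual point is recognising that representability of $\jsp{X} \to X$ splits cleanly along the two axioms of an algebraic space and that each axiom is supplied by one of the two structural hypotheses (\'etale atlas plus Lemma \ref{lem:difflcriteriajetspacespresentation} for the cover, unramified diagonal for the monomorphic diagonal). The only subtlety is bookkeeping the Cartesian squares at the level of $2$-categories, in particular confirming that $(\jsp{X} \times_X T) \times_T (\jsp{X} \times_X T)$ is indeed the fibre product $(\jsp{X} \times_X \jsp{X}) \times_X T$ so that the representability of $\Delta_{\jsp{X}/X}$ pulls back to $T$.
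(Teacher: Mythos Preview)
Your argument is correct and is essentially the proof the paper leaves implicit (the corollary is stated without proof, as an immediate consequence of Lemma~\ref{lem:difflcriteriajetspacespresentation}). Two small remarks. First, $U_T = U \times_X T$ need only be an algebraic space, not a scheme, since the diagonal of a DM stack is a priori only representable by algebraic spaces; this does not affect your argument, as $\jsp{U} \times_U U_T$ is still an algebraic space mapping \'etale-surjectively onto $\jsp{X} \times_X T$. Second, your part (ii) can be shortened: apply the unramified clause of Lemma~\ref{lem:difflcriteriajetspacespresentation} directly to the unramified morphism $\Delta_X : X \to X \times X$, obtaining that $\jsp{X} \to \jsp{X \times X} \times_{X \times X} X = \jsp{X} \times_X \jsp{X}$ is a monomorphism, rather than re-deriving this by hand via the Isom sheaf (and the citation of \cite[04DZ]{sta} is heavier than needed; uniqueness of lifts through nilpotent thickenings for unramified morphisms is the elementary fact already used in the proof of the lemma).
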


\begin{lemma}\label{lem:repabilityofdiagonalforjetspaces}
Let $X$ be a stack over affine schemes with the \'etale topology. If the diagonal of $X$ is representable by algebraic spaces or by schemes, the same is true of the diagonal of $\jsp{X}$. 
\end{lemma}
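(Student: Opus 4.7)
The plan is to identify the diagonal of $\jsp{X}$ with a Weil restriction of the diagonal of $X$ along the finite locally free morphism $\Spec A\adj{t}_m \to \Spec A$, and then invoke representability of Weil restrictions along such morphisms.

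First I would unravel the functor of points of the diagonal. Given an affine test scheme $T = \Spec A$ and a pair of $m$-jets $f,g : T \times \Delta_m \to X$ (i.e., a map $T \to \jsp{X} \times \jsp{X}$), the sheaf $\Isom_{\jsp{X}}(f,g)$ on $T$-schemes sends $T' \to T$ to the set of $2$-isomorphisms between the induced jets $f|_{T' \times \Delta_m}$ and $g|_{T' \times \Delta_m}$ in $X(T' \times \Delta_m)$. By the definition of the diagonal of $X$, this is precisely the set of lifts of $(f,g)|_{T' \times \Delta_m}$ to $X$ over $X \times X$. Equivalently, writing $I \coloneqq (T \times \Delta_m) \times_{X \times X, \Delta_X} X$, we have
\[
\Isom_{\jsp{X}}(f, g)(T') = \Hom_{T \times \Delta_m}(T' \times \Delta_m,\, I).
\]
By the hypothesis on $\Delta_X$, the object $I$ is representable by an algebraic space (resp.\ scheme) over $T \times \Delta_m$.

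Second, I would recognize the right-hand side as the value on $T'$ of the Weil restriction $\WR_{T \times \Delta_m / T} I$ along the finite locally free morphism $T \times \Delta_m \to T$. Applying the standard representability result for Weil restrictions along finite locally free morphisms (as used in Situation \ref{sit:finitefree} and spelled out e.g.\ in \cite[\S 7.6]{neronmodels} / \cite[05Y0]{sta}), the functor $\WR_{T\times\Delta_m/T} I$ is representable by an algebraic space whenever $I$ is. Étale-locally on $T$ this is automatic; to glue globally one uses that $T \times \Delta_m \to T$ is affine and finite locally free, so the formation of the Weil restriction commutes with étale localization in $T$ (as in Remark \ref{rmk:jetspacesareastack}). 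This establishes representability of the diagonal of $\jsp{X}$ by algebraic spaces.

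For the scheme case one has to be slightly more careful, since Weil restriction along a finite locally free morphism preserves representability by schemes only under a mild quasi-projectivity assumption on the target. The standard way around this is to observe that $I \to T \times \Delta_m$ is a monomorphism (being a base change of the diagonal $\Delta_X$), so étale-locally on $T$ it is a locally closed immersion into a scheme, and Weil restriction along a finite locally free morphism preserves locally closed subschemes. Thus $\WR_{T\times\Delta_m/T} I$ is a scheme over $T$ whenever $I$ is. The main thing to verify carefully is this Weil-restriction-preserves-schemes step; the rest is a routine chase of functor of points, essentially a translation of Lemma \ref{lem:jetsptansp} to the diagonal setting.
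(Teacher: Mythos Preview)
Your identification of the pullback of $\Delta_{\jsp{X}}$ along $\Spec A \to \jsp{X}\times\jsp{X}$ with the jet space of $I = (\Spec A\adj{t}_m) \times_{X \times X} X$ is exactly the paper's argument. The paper then proves internally, as a Claim, that the jet space of an algebraic space is an algebraic space: the diagonal of an algebraic space is representable by \emph{schemes}, so by the scheme case already in hand its jet space has scheme-representable diagonal, and an \'etale cover by a scheme yields an \'etale cover on jet spaces by Lemma~\ref{lem:difflcriteriajetspacespresentation}. You instead outsource this step to general Weil restriction representability results; this is fine in spirit, though you should make sure your reference actually covers algebraic spaces and not just schemes.

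There is one genuine slip in your scheme case. You assert that $I \to T \times \Delta_m$ is a monomorphism ``being a base change of the diagonal $\Delta_X$'', but $\Delta_X$ is a monomorphism only when $X$ is equivalent to a sheaf. The hypothesis that $\Delta_X$ is representable by schemes still allows $X = B\GG_m$, whose diagonal is not a monomorphism (its fiber is $\GG_m$). So the locally-closed-immersion argument does not apply. Fortunately the conclusion survives for a simpler reason: the jet space of any scheme is a scheme, since a map $\Spec A\adj{t}_m \to Y$ has the same underlying continuous map as $\Spec A \to Y$ and hence lands in an affine open, so one reduces to and glues from the affine case. Replace your monomorphism paragraph with this and the proof goes through.
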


\begin{proof}

We will pull back the diagonal $\Delta_{\jsp{X}}$ along a map from an affine $\Spec A$ and determine its functor of points for an affine $\Spec B$
\[\begin{tikzcd}
\Spec B \ar[dr] \ar[rr, bend left] \ar[dashed, r]         &\jsp{X} \times_{\jsp{X} \times \jsp{X}} \Spec A \ar[r] \ar[d] \pb       &\jsp{X} \ar[d]        \\
        &\Spec A \ar[r]        &\jsp{X} \times \jsp{X}.
\end{tikzcd}\]
Unpacking, this becomes
\[\begin{tikzcd}
\Spec B\adj{t}_m \ar[dr] \ar[rr, bend left] \ar[r, dashed]        &X \times_{X \times X} \Spec A \adj{t}_m \ar[r] \ar[d] \pb        &X \ar[d]      \\
        &\Spec A \adj{t}_m \ar[r]      &X \times X.
\end{tikzcd}\]
This gives us an identification 
\[\jsp{X} \times_{\jsp{X} \times \jsp{X}} \Spec A = \jsp{X \times_{X \times X} \Spec A \adj{t}_m}.\]
We've shown the diagonal of $\jsp{X}$ is representable by \textit{the jet space of} an algebraic space or scheme under the corresponding assumption on $X$. The jet space of a scheme is a scheme, but it remains to show the same for algebraic spaces. 

\textbf{Claim:} The jet space of an algebraic space is an algebraic space.

The diagonal of an algebraic space is representable by schemes, so the same is true of its jet space by the above argument. Given an \'etale surjection from a scheme onto the algebraic space, Lemma \ref{lem:difflcriteriajetspacespresentation} transforms this into an \'etale surjection on jet spaces. 

\end{proof}

\begin{corollary}
The jet space of an Artin stack is an Artin stack. The jet space of a DM stack is DM. 
\end{corollary}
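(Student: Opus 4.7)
The plan is to verify the axioms of an Artin (respectively DM) stack for $\jsp{X}$ one at a time, using the lemmas already established in the appendix. Axiom zero, that $\jsp{X}$ is a stack in the \'etale topology on affine schemes, is Remark \ref{rmk:jetspacesareastack}. The remaining axioms are representability of the diagonal (with the appropriate finiteness properties) and the existence of a smooth (respectively \'etale) surjection from a scheme.

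For the diagonal, Lemma \ref{lem:repabilityofdiagonalforjetspaces} computes the pullback of $\Delta_{\jsp{X}}$ along $\Spec A \to \jsp{X} \times \jsp{X}$ as $\jsp{\Delta_X \times_{X \times X} \Spec A\adj{t}_m}$. Since $X$ is Artin (resp.\ DM), the right-hand side is the jet space of an algebraic space (resp.\ a scheme), and the claim in the proof of Lemma \ref{lem:repabilityofdiagonalforjetspaces} that jet spaces preserve algebraic spaces then gives representability by algebraic spaces (resp.\ schemes). So I would first just read off representability of $\Delta_{\jsp{X}}$ from that lemma.

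For the presentation, I would pick a smooth surjection $U \to X$ from a scheme $U$ (taking it \'etale in the DM case). The jet space $\jsp{U}$ is a scheme, since $U$ is, and Lemma \ref{lem:difflcriteriajetspacespresentation} asserts that $\jsp{f} \colon \jsp{U} \to \jsp{X}$ is smooth (resp.\ that $\jsp{U} \to \jsp{X} \times_X U$ is an isomorphism when $f$ is \'etale); combined with the surjectivity clause of that lemma in the smooth-surjective case, this yields a smooth surjection $\jsp{U} \to \jsp{X}$. In the DM case, the isomorphism $\jsp{U} \simeq \jsp{X} \times_X U$ identifies $\jsp{U} \to \jsp{X}$ with the base change of the \'etale surjection $U \to X$, which is therefore itself an \'etale surjection. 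Representability of $\jsp{U} \to \jsp{X}$ is automatic from the representable diagonal of $\jsp{X}$.

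I do not anticipate a real obstacle: all the work has been frontloaded into the differential-criteria lemma and the diagonal-representability lemma. The only thing to be a bit careful about is quasi-separation/finiteness adjectives on the diagonal, which track through the formula $\jsp{X} \times_{\jsp{X} \times \jsp{X}} \Spec A = \jsp{\Delta_X \times_{X \times X} \Spec A\adj{t}_m}$ since taking jets preserves finite presentation and separation, so the final statement inherits the precise flavor of Artin/DM convention used for $X$.
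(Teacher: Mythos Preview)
Your proposal is correct and follows the same route as the paper's proof: invoke Lemma \ref{lem:repabilityofdiagonalforjetspaces} for the diagonal and Lemma \ref{lem:difflcriteriajetspacespresentation} for the smooth (resp.\ \'etale) presentation. Your extra unpacking of the DM case via the identification $\jsp{U} \simeq \jsp{X} \times_X U$ is exactly the content implicit in the paper's one-line appeal to Lemma \ref{lem:difflcriteriajetspacespresentation}.
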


\begin{proof}

Lemma \ref{lem:repabilityofdiagonalforjetspaces} gives us representability of the diagonal and Lemma \ref{lem:difflcriteriajetspacespresentation} gives us a smooth or \'etale surjection from a scheme onto the jet space.

\end{proof}

\begin{corollary}\label{cor:ljetrepablestacks}
The log jet space $\ljsp{r}{X, m}$ of a log algebraic stack $X$ is a log algebraic stack. 
\end{corollary}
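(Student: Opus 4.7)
The strategy is to leverage the strict fs pullback square recorded in Remark \ref{rmk:logstackpbsquare}. For a log algebraic stack $X$, the canonical strict map $X \to \Log$ yields a Cartesian square
\[
\begin{tikzcd}
\ljsp{r}{X, m} \ar[r] \ar[d] \lpbstrict      &\ev \Log \ar[d]     \\
\jsp{X^\circ, m} \ar[r]        &\jsp{\Log^\circ, m}.
\end{tikzcd}
\]
By the appendix, ordinary jet spaces of algebraic stacks are algebraic, so the bottom row consists of algebraic stacks. Since a strict fs pullback of a log algebraic stack along a map of algebraic stacks is again a log algebraic stack (the log structure comes from the upper right corner), the corollary reduces to checking that $\ev \Log$ is a log algebraic stack with representable structural map to $\jsp{\Log^\circ, m}$.

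To handle $\ev \Log$, I would use the étale cover $\bigsqcup_P \af[P] \to \Log$ by Artin cones indexed by fs sharp monoids $P$. The description preceding Remark \ref{rmk:logstackpbsquare} identifies
\[
\ev \af[P] \simeq \bigsqcup_{\Hom(P, \NN)} \af[P],
\]
a disjoint union of Artin cones and thus visibly a log algebraic stack. Compatibility of $\ev(-)$ with fs pullback squares (Remark \ref{rmk:logjetspacepb}), applied to the strict étale map $\af[P] \to \Log$, produces a strict Cartesian square with $\ev \af[P] \to \ev \Log$ on top, exhibiting that map as strict étale. The collection $\{\ev \af[P] \to \ev \Log\}_P$ is therefore a candidate atlas by log algebraic stacks.

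The main obstacle is verifying that this candidate atlas is jointly surjective. A $T$-valued point of $\ev \Log$ is a map $T \times P_\NN \to \Log$, and surjectivity asks that such a map factors strict-étale locally on $T$ through some Artin cone $\af[P]$. Pulling back the étale cover $\bigsqcup_P \af[P] \to \Log$ along $T \times P_\NN \to \Log$ gives an étale cover of $T \times P_\NN$, and because $P_\NN \to \Spec k$ is a universal homeomorphism on underlying schemes, topological invariance of the étale site \cite[04DZ]{sta} descends this to an étale cover of $T$ through which the original map factors. Once surjectivity is in place, $\ev \Log$ carries a strict étale atlas by log algebraic stacks and is itself a log algebraic stack. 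Feeding this back into the pullback square above finishes the proof.
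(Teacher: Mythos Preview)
Your approach is essentially the paper's: both invoke the strict pullback square of Remark \ref{rmk:logstackpbsquare} together with the appendix's representability of ordinary jet spaces of algebraic stacks. You supply considerably more detail than the paper's one-line proof, in particular an explicit atlas for $\ev \Log$ via the $\ev \af[P]$; one minor quibble is that the appeal to topological invariance of the \'etale site is unnecessary, since $(T \times P_\NN)^\circ = T^\circ$ on the nose and a strict \'etale cover of $T \times P_\NN$ is automatically of the form $T' \times P_\NN$ for $T' \to T$ \'etale.
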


\begin{proof}

The pullback square \ref{rmk:logstackpbsquare} exhibits $\ljsp{r}{X, m}$ as a fiber product of log algebraic stacks. 

\end{proof}

\begin{remark}
The construction $X \mapsto \jsp{X}$ also respects groupoid presentations. The 2-functor of taking jet spaces respects limits as well as smooth morphisms as in Lemma \ref{lem:difflcriteriajetspacespresentation}.

\end{remark}

\bibliographystyle{unsrt}
\bibliography{zbib}
\end{document}